\documentclass[10pt, a4paper]{amsart}

\pdfoutput=1

\usepackage[T1]{fontenc}

\usepackage{mathrsfs, amsmath, amsthm, amssymb, stmaryrd, enumerate, bbm, mathdots,graphicx}%
\usepackage{tikz}%
\usetikzlibrary{knots}
\usepackage[linktocpage]{hyperref}%
\usepackage[all]{xy}%
\xyoption{2cell}%
\UseAllTwocells%

\newdir{t>}{{\UseTips\dir{>}}}
\newdir{ >}{{}*!/-5pt/@{>}}




\DeclareMathOperator{\id}{id}

\DeclareMathOperator{\Mod}{\mathbf{Mod}}

\DeclareMathOperator{\fgp}{fgp}

\DeclareMathOperator{\colim}{colim}

\DeclareMathOperator{\Set}{\mathbf{Set}}

\DeclareMathOperator{\CAlg}{\mathbf{CAlg}}
\DeclareMathOperator{\CRing}{\mathbf{CRing}}

\newcommand{\ca}[1]{\mathscr{#1}}

\newcommand{\ten}[1]{\mathop{{\otimes}_{#1}}}

\newcommand{\defl}{\mathrel{\mathop:}=}

\newcommand{\downmapsto}{\rotatebox[origin=c]{-90}{$\mapsto$}\mkern2mu}


\theoremstyle{plain}
\newtheorem{thm}{Theorem}[section]
\newtheorem*{thm*}{Theorem}
\newtheorem{prop}[thm]{Proposition}
\newtheorem{lemma}[thm]{Lemma}

\theoremstyle{definition}

\newtheorem{dfn}[thm]{Definition}

\newtheorem{question}[thm]{Question}

\newtheoremstyle{citing}{}{}{\itshape}{}{\bfseries}{.}{ }{\thmnote{#3}}
\theoremstyle{citing}
\newtheorem{cit}{}

\newtheoremstyle{citingdfn}{}{}{}{}{\bfseries}{.}{ }{\thmnote{#3}}
\theoremstyle{citingdfn}

\numberwithin{equation}{section}

\keywords{Stably free module, unimodular row, Hermite ring conjecture}
\subjclass[2020]{13C10, 19A13}

\author{Daniel Sch\"appi}
\thanks{This research was supported by the DFG grant: SFB 1085 ``Higher invariants.''}
\address{Fakult{\"a}t f{\"u}r Mathematik,
Universit{\"a}t Regensburg,
93040 Regensburg,
Germany}
\email{daniel.schaeppi@ur.de}

\title{Formal Laurent series rings and the Hermite ring conjecture}

\begin{document}

\begin{abstract}

 We study the question if projective modules over formal Laurent series rings are extended. We relate this question to the Bass--Quillen conjecture for commutative regular local rings and to the Hermite ring conjecture for all commutative local rings.
 
 Using our result about projective modules over formal Laurent series rings, we prove a reduction step for the Hermite ring conjecture. We show that the Hermite ring conjecture holds for all commutative local rings if and only if it holds for complete intersection rings which are also unique factorization domains.
\end{abstract}

\maketitle

\tableofcontents

\section{Introduction}\label{section:introduction}

The Bass--Quillen conjecture is the conjecture that for a commutative regular ring $R$, all finitely generated projective modules over the polynomial ring $R[t_1, \ldots, t_n]$ are extended from $R$. Lennart Meier asked if the analogous statement holds for the ring $R(\!(t_1, \ldots, t_n)\!)$ of formal Laurent series over $R$ \cite{MEIER}.

Thanks to Quillen's Patching Theorem \cite[Theorem~1]{QUILLEN}, the Bass--Quillen conjecture reduces to the assertion that for all commutative regular local rings $R$, all finitely generated projective $R[t]$-modules are free. The theorem below shows that this is \emph{equivalent} to the corresponding statement for the ring $R(\!(t)\!)$ of formal Laurent series in one variable. We write $R \langle t \rangle$ for the localization of $R[t]$ at the set of monic polynomials.

\begin{thm}\label{thm:regular_local_equivalence}
 Let $R$ be a commutative regular local ring. Then the following are equivalent:
 \begin{enumerate}
 \item[(i)] All finitely generated projective $R[t]$-modules are free;
 \item[(ii)] All finitely generated projective $R[t,t^{-1}]$-modules are free;
 \item[(iii)] All finitely generated projective $R\langle t\rangle$-modules are free;
 \item[(iv)] All finitely generated projective $R(\!(t)\!)$-modules are free.
 \end{enumerate}
\end{thm}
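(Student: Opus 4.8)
The plan is to handle the three ``polynomial'' rings $R[t]$, $R[t,t^{-1}]$, $R\langle t\rangle$ by Horrocks' theorem, and to bridge to $R(\!(t)\!)$ by Beauville--Laszlo gluing along the non-zero-divisor $t$. It is convenient to note that, $R$ being regular local, all four rings are regular, so that projective modules have finite free resolutions; the substance of the theorem is the transfer of the property ``projective implies free'' across the four rings.

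First I would dispose of the Horrocks reductions. Since $t$ is monic, the local form of Horrocks' theorem shows that a finitely generated projective $R[t]$-module $P$ is free as soon as $P\otimes_{R[t]}R[t,t^{-1}]$ is free, and likewise as soon as $P\otimes_{R[t]}R\langle t\rangle$ is free; applied to arbitrary $P$ this gives (ii)$\Rightarrow$(i) and (iii)$\Rightarrow$(i). The passage to the monic localization is the standard fact that $R\langle t\rangle=\colim_{f}R[t]_{f}$ (filtered over monic $f$) and that base change along $R[t]\to R\langle t\rangle$ induces a bijection on isomorphism classes of finitely generated projectives, which yields (i)$\Leftrightarrow$(iii). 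For (i)$\Rightarrow$(ii) I would invoke the Laurent analogue of Quillen's reduction, due to Swan: over a regular ring, if every finitely generated projective $R[t]$-module is extended from $R$ then so is every finitely generated projective $R[t,t^{-1}]$-module; over the local ring $R$ ``extended'' means ``free''. This settles (i)$\Leftrightarrow$(ii)$\Leftrightarrow$(iii) and reduces everything to comparing these with (iv).

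The bridge to $R(\!(t)\!)$ is the Beauville--Laszlo square $R[t]=R[t,t^{-1}]\times_{R(\!(t)\!)}R[[t]]$, valid because $R[[t]]$ is the $t$-adic completion of $R[t]$, with $R[[t]][t^{-1}]=R(\!(t)\!)$ and $t$ a non-zero-divisor. The decisive elementary input is that $R[[t]]$ is again local---its non-units are exactly the series whose constant term lies in $\mathfrak m$---so every finitely generated projective $R[[t]]$-module is free. Gluing then yields the extension criterion
\[
 M \text{ extends to a projective over } R[t] \iff M\otimes_{R[t,t^{-1}]}R(\!(t)\!) \text{ is free,}
\]
for $M$ a finitely generated projective $R[t,t^{-1}]$-module, together with its mirror at $t=\infty$ using $R[t^{-1}]$, $R[[t^{-1}]]$ and the identification $R(\!(t^{-1})\!)\cong R(\!(t)\!)$. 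The same package also supplies the algebraization I need in the other direction: every finitely generated projective $R(\!(t)\!)$-module is base-changed from a finitely generated projective $R[t,t^{-1}]$-module.

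These tools give (ii)$\Rightarrow$(iv) at once: freeness over $R[t,t^{-1}]$ passes to $R(\!(t)\!)$ under base change, and algebraization shows there is nothing else over $R(\!(t)\!)$. For (iv)$\Rightarrow$(ii) I would take a finitely generated projective $M$ over $R[t,t^{-1}]$; hypothesis (iv) makes both $M\otimes R(\!(t)\!)$ and $M\otimes R(\!(t^{-1})\!)$ free, so by the two extension criteria $M$ extends across $0$ and across $\infty$ and the extensions glue to a vector bundle $\mathcal E$ on $\mathbb{P}^1_R$ with $\mathcal E|_{\mathbb{G}_{m,R}}=M$. It then remains to see that $\mathcal E|_{\mathbb{G}_{m,R}}$ is free. \emph{This descent across the formal boundary---the assertion that a vector bundle on $\mathbb{P}^1_R$ restricts to a free module on $\mathbb{G}_{m,R}$---is the main obstacle and the technical heart of the argument.} I would attack it by reducing modulo $\mathfrak m$ and splitting $\mathcal E\otimes_R(R/\mathfrak m)$ into line bundles by Grothendieck's theorem on $\mathbb{P}^1$, and then lifting the resulting trivialization of the $\mathbb{G}_m$-restriction to $R$ by cohomology-and-base-change: because $R$ is local the modules $H^0(\mathbb{P}^1_R,\mathcal E(n))$ are free, and regularity of $R$ is what controls the a priori jumping of the splitting type in the family and lets me assemble a global basis of $M$. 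This gives (iv)$\Rightarrow$(ii), and together with the Horrocks reductions it establishes the equivalence of (i)--(iv).
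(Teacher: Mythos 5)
Your proposal gets the easy directions right (the two Horrocks implications (ii)$\Rightarrow$(i) and (iii)$\Rightarrow$(i)), but every arrow pointing \emph{from} the polynomial side \emph{toward} $R\langle t\rangle$ and $R(\!(t)\!)$ --- which is where the entire content of the theorem lies --- is discharged by citing a result that is false, not a theorem, or does not say what you need. First, the ``Laurent analogue of Quillen's reduction, due to Swan'' invoked for (i)$\Rightarrow$(ii) is false as stated: for the regular ring $R=\mathbb{C}[x_0,\ldots,x_4]\slash(\sum_{i}x_i^2-1)$ every finitely generated projective $R[t]$-module is extended from $R$ by Lindel's theorem, yet Swan himself constructed a stably free $R[t,t^{-1}]$-module that does not extend to $R[t]$, hence is not extended from $R$; this very example is recorded in the paper as Proposition~\ref{prop:Swan_example}. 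Restricted to local $R$, your cited statement \emph{is} the implication (i)$\Rightarrow$(ii) that has to be proved, not something one can quote. Second, the claim that base change along $R[t]\to R\langle t\rangle$ induces a bijection on isomorphism classes of finitely generated projectives is not a ``standard fact'': its surjectivity (every projective $R\langle t\rangle$-module descends to $R[t]$) is essentially the equivalence (i)$\Leftrightarrow$(iii), which is Theorem~A of Bhatwadekar--Rao and which the paper reproves rather than assumes. Third, the ``algebraization'' used for (ii)$\Rightarrow$(iv) --- that every finitely generated projective $R(\!(t)\!)$-module is base-changed from $R[t,t^{-1}]$ --- does not follow from Beauville--Laszlo gluing: a descent datum through the square requires a projective $R\llbracket t\rrbracket$-lattice, and since $R\llbracket t\rrbracket$ is local such a lattice is free, so it exists if and only if the module is already free; the argument is circular. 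This algebraization is precisely the technical heart of the paper: Lemma~\ref{lemma:reduction_step} produces, via an explicit auxiliary matrix and a Weierstrass-polynomial argument, a Laurent-\emph{polynomial} unimodular row in the $\mathrm{GL}_{r+1}$-orbit of any unimodular row over $R(\!(t)\!)$, and the regularity hypothesis enters through a $K_0$ computation reducing all projectives over the four rings to stably free ones, so that Theorem~\ref{thm:Laurent} applies.

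Ironically, the step you single out as ``the main obstacle and the technical heart'' of (iv)$\Rightarrow$(ii) --- that a vector bundle $\mathcal{E}$ on $\mathbb{P}^1_R$ obtained by gluing restricts to a free module on $\mathbb{G}_{m,R}$ --- is no obstacle at all: it is exactly Horrocks' theorem (Theorem~\ref{thm:Horrocks}), which you already use elsewhere, since $\mathcal{E}$ restricts to a free module on $\mathbb{A}^1_R$ and hence on $\mathbb{G}_{m,R}$; no Grothendieck splitting, cohomology-and-base-change, or appeal to regularity is needed there, and as sketched that paragraph is not a proof. So your (iv)$\Rightarrow$(ii) can be repaired, and it then parallels the paper's patching proofs of (iv)$\Rightarrow$(iii)$\Rightarrow$(ii). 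But with the three invalid citations removed, what survives of your argument is only (ii)$\Rightarrow$(i), (iii)$\Rightarrow$(i), and (iv)$\Rightarrow$(ii): no implication out of (i) remains, and the circle is not closed. The missing ingredient is exactly the paper's (i)$\Rightarrow$(iv), which cannot be obtained from gluing formalism alone and is what Lemma~\ref{lemma:reduction_step} was designed to supply.
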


 An example due to Swan shows that projective $R(\!(t)\!)$-modules need not be extended from $R$ if the regular ring $R$ is not local (see Proposition~\ref{prop:Swan_example}). 
 
 The regularity assumption is used to show that all finitely generated projective modules over the various rings occurring in Theorem~\ref{thm:regular_local_equivalence} are stably free. Recall that a finitely generated projective $A$-module $P$ is \emph{stably free} if there exist $m,n \in \mathbb{N}$  such that $P \oplus A^m \cong A^n$. Following Lam, we call a ring \emph{Hermite} if all finitely generated stably free modules are free (see \cite[Definition~I.4.6]{LAM}). Theorem~\ref{thm:regular_local_equivalence} is a consequence of the following result, which is valid for \emph{all} local rings.
 
 \begin{thm}\label{thm:Hermite_equivalence}
 Let $R$ be a commutative local ring. Then the following are equivalent:
 \begin{enumerate}
 \item[(i)] The ring $R[t]$ is Hermite;
 \item[(ii)] The ring $R[t,t^{-1}]$ is Hermite;
 \item[(iii)] The ring $R \langle t \rangle$ is Hermite;
 \item[(iv)] The ring $R(\!(t)\!)$ is Hermite.
 \end{enumerate}
 \end{thm}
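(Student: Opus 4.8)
The plan is to move between the four rings by base change together with a reflection-of-freeness principle, and to assemble the equivalences into the cycle $(i)\Rightarrow(ii)\Rightarrow(iv)\Rightarrow(iii)\Rightarrow(i)$. Base change along any ring map sends a finitely generated stably free module to a stably free module and a free module to a free module; hence, for a comparison map $A\to B$, knowing that $B$ is Hermite forces $A$ to be Hermite \emph{as soon as} one has the reflection statement ``a finitely generated projective $A$-module that becomes free over $B$ is already free.'' The reverse, passing from $A$ Hermite to $B$ Hermite, is the harder direction: here one must actually \emph{produce} a trivialization over $B$, since completability is not transported by flat descent. I would arrange matters so that all the ``downward'' implications (toward $R[t]$) are cheap reflection steps and the ``upward'' implications carry the geometric content.

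For the downward steps I would invoke Horrocks' theorem (see \cite{LAM}), which applies because $R$ is local. Since $t$ is itself a monic polynomial we have $R[t,t^{-1}]=R[t]_t$, and Horrocks' theorem says precisely that a finitely generated projective $R[t]$-module which is free after inverting a monic polynomial is free; applying it with the monic polynomial $t$ gives $(ii)\Rightarrow(i)$, and applying it with a general monic polynomial — together with $R\langle t\rangle=\colim_f R[t]_f$ over monic $f$ (directed by divisibility), so that a finitely presented module free over $R\langle t\rangle$ is already free over some $R[t]_f$ — gives $(iii)\Rightarrow(i)$. The analogous reflection along the map $R\langle t\rangle\to R(\!(t)\!)$ (legitimate because every monic polynomial maps to a unit in $R(\!(t)\!)$ under $t\mapsto t^{-1}$, so $R[t]\to R(\!(t)\!)$ factors through $R\langle t\rangle$) would give $(iv)\Rightarrow(iii)$; proving this \emph{formal-Laurent Horrocks statement} is where classical Horrocks does not apply, since $R(\!(t)\!)$ is not local.

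For the upward implications I would trivialize stably free modules over the larger rings by approximation and formal glueing. To obtain $(ii)\Rightarrow(iv)$ I would approximate a unimodular row over $R(\!(t)\!)$ by a Laurent-polynomial row over $R[t,t^{-1}]$: truncating the series changes each entry only by an element of $t^N R[[t]]$ for large $N$, and the $t$-adic completeness of $R[[t]]$ makes unimodular rows rigid under such perturbations, so the truncated row lies in the same orbit; its completability over $R[t,t^{-1}]$, supplied by $(ii)$, then transports back. To obtain $(i)\Rightarrow(ii)$ I would regard a stably free $R[t,t^{-1}]$-module as a bundle on $\mathbb{G}_m$ over $R$ and glue it, along the two formal punctures $t=0$ and $t=\infty$, to a bundle on $\mathbb{P}^1_R$; freeness at the punctures (this is where the $R(\!(t)\!)$-information enters) combined with the Grothendieck--Horrocks splitting of bundles on $\mathbb{P}^1$ over a local ring forces the trivial splitting type, hence freeness of the original module. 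Quillen's Patching Theorem \cite{QUILLEN} is the natural device for making the glueing precise.

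The main obstacle, and the step I expect to be genuinely hard, is the passage to the complete ring $R(\!(t)\!)$ — concretely the reflection $(iv)\Rightarrow(iii)$ and, to a lesser extent, the rigidity in $(ii)\Rightarrow(iv)$. Faithfully flat descent is of no help here, because completability of a unimodular row is not an fpqc-local condition, so one cannot simply descend freeness along $R\langle t\rangle\to R(\!(t)\!)$; one must instead exploit the specific geometry of the affine line and its formal punctures. It is exactly at this point that the locality of $R$ is indispensable: both Horrocks' theorem and the splitting on $\mathbb{P}^1_R$ fail in general, consistent with Swan's example (Proposition~\ref{prop:Swan_example}), which shows that projective $R(\!(t)\!)$-modules need not be extended once $R$ is regular but no longer local.
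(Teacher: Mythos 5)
Your two Horrocks reflections $(ii)\Rightarrow(i)$ and $(iii)\Rightarrow(i)$ are sound, but the remaining steps of your cycle each lack their key ingredient. The most concrete gap is in $(ii)\Rightarrow(iv)$. Truncation together with the rank-one perturbation matrix $I_{r+1}+y^{\mathsf{T}}\epsilon$ (whose determinant $1+\sum_i \epsilon_i y_i$ is a unit for deep truncations) does place the truncated row in the same $\mathrm{GL}_{r+1}\bigl(R(\!(t)\!)\bigr)$-orbit as the original one --- this is exactly the auxiliary-matrix step in the paper's Lemma~\ref{lemma:reduction_step}. But to invoke $(ii)$ you must know that the truncated row is unimodular \emph{over $R[t,t^{-1}]$}, and unimodularity over $R(\!(t)\!)$ does not descend to $R[t,t^{-1}]$: truncating the unimodular row $\bigl((1-t)^{-1},0,\ldots,0\bigr)$ gives $\bigl(1+t+\cdots+t^{N-1},0,\ldots,0\bigr)$, whose entries generate a proper ideal of $R[t,t^{-1}]$, since the quotient $R[t,t^{-1}]\slash(1+t+\cdots+t^{N-1})\cong R[t]\slash(1+t+\cdots+t^{N-1})$ is nonzero. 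Note also that your argument for this step never uses locality of $R$, which is a warning sign. The paper closes precisely this hole by using locality twice: first elementary operations modulo $\mathfrak{m}$ arrange the row so that its first entry can be made a \emph{Weierstrass polynomial}, and then the Top-Bottom Lemma (Lemma~\ref{lemma:top-bottom}, comaximality of a Weierstrass polynomial with a polynomial having unit constant term) yields unimodularity over $R[t,t^{-1}]$.

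There are two further problems. Your $(i)\Rightarrow(ii)$ is circular within your own cycle: glueing a bundle on $\mathbb{G}_{m,R}$ to one on $\mathbb{P}^1_R$ along the formal punctures requires glueing data over $R(\!(t)\!)$ and $R(\!(t^{-1})\!)$, i.e.\ that the module becomes free there (a finitely generated projective module over the local ring $R\llbracket t\rrbracket$ is free, so such data exists exactly when the module is free at the puncture); that is statement-$(iv)$ information, which your cycle only produces downstream of $(ii)$. Finally, you explicitly leave $(iv)\Rightarrow(iii)$ unproven. That step is in fact the cheapest of the nontrivial ones once one uses the right patching diagram rather than Horrocks: $R[t]_{1+(t)}\rightarrow R\llbracket t\rrbracket$ is an analytic isomorphism along $t$, and $R\langle s\rangle=R[t]_{1+(t),t}$ for $s=t^{-1}$, so a stably free $R\langle s\rangle$-module whose extension to $R(\!(t)\!)$ is free (assumption $(iv)$) patches with a free $R\llbracket t\rrbracket$-module to a projective module over the \emph{local} ring $R[t]_{1+(t)}$, which is therefore free; localizing back at $t$ gives freeness of the original module. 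For comparison, the paper runs the cycle $(i)\Rightarrow(iv)\Rightarrow(iii)\Rightarrow(ii)\Rightarrow(i)$, concentrating all the genuine content in $(i)\Rightarrow(iv)$ (Weierstrass preparation of the row, then patching plus Horrocks); your outline instead distributes the difficulty over three steps, each of which is missing the idea that would make it work.
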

 
 We note that the equivalence $(i) \Leftrightarrow (iii)$ above already appears as Theorem~A in \cite{BHATWADEKAR_RAO}. Our proof of Theorem~\ref{thm:Hermite_equivalence} also gives a novel proof of \cite[Theorem~A]{BHATWADEKAR_RAO}.
 
 Lam posed the following question about commutative Hermite rings (see \cite[p.~ix]{LAM_ORIGINAL}): if $R$ is a commutative Hermite ring, is $R[t]$ necessarily Hermite? This is known as the Hermite ring conjecture $(\mathrm{H})$. This is a far-reaching generalization of the Bass--Quillen conjecture; by Quillen's Patching Theorem, this question has an affirmative answer if and only if the equivalent conditions of Theorem~\ref{thm:Hermite_equivalence} hold for \emph{all} commutative local rings.
 
 As an application of Theorem~\ref{thm:Hermite_equivalence}, we show that the Hermite ring conjecture can be reduced to a question about a countable set of relatively ``nice'' rings.
 
 \begin{dfn}\label{dfn:test_set_for_H}
  Given a class (or set) of commutative rings $\ca{C}$, we write $(\mathrm{H}[\ca{C}])$ for the assertion:
  \begin{center}
  For all $A \in \ca{C}$, the ring $A[t]$ is Hermite.
  \end{center}
  We let $\ca{H}$ denote the class of all commutative Hermite rings. We call $\ca{C}$ a \emph{test class for $(\mathrm{H})$} (respectively a \emph{test set for $(\mathrm{H})$}) if $\ca{C}$ is a class (respectively a set) of commutative rings and the implication
  \[
  (\mathrm{H}[\ca{C}]) \Rightarrow (\mathrm{H}[\ca{H}])
  \]
  holds.
 \end{dfn}
 
 For example, Quillen's Patching Theorem implies that the class $\ca{L}$ of commutative local rings is a test class for $(\mathrm{H})$.
 
 Recall that an ideal in a commutative ring is called a \emph{complete intersection ideal} if it is generated by a regular sequence. We call a commutative ring $A$ a \emph{global complete intersection ring} if there exists a polynomial ring $\mathbb{Z}[t_1, \ldots, t_n]$ and a complete intersection ideal $I \subseteq \mathbb{Z}[t_1,\ldots,t_n]$ such that $A \cong \mathbb{Z}[t_1, \ldots, t_n] \slash I$.
 
 \begin{thm}\label{thm:test_set_of_complete_intersections}
 There exists a countable test set $\ca{T}$ for $(\mathrm{H})$ with the following properties:
 \begin{enumerate}
 \item[(i)] Each $A \in \ca{T}$ is $\mathbb{N}$-graded, with degree $0$ part $A_0=\mathbb{Z}$;
 \item[(ii)] Each $A \in \ca{T}$ is a global complete intersection ring;
 \item[(iii)] Each $A \in \ca{T}$ is a unique factorization domain. 
 \end{enumerate}
 \end{thm}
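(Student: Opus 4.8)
The plan is to combine Quillen's Patching Theorem with a compactness argument and then to improve the resulting countable test set to one consisting of graded complete intersection UFDs. By Quillen's Patching Theorem \cite{QUILLEN} the class $\ca{L}$ of commutative local rings is already a test class, so by Definition~\ref{dfn:test_set_for_H} it suffices to produce a countable set $\ca{T}$ with the stated properties (i)--(iii) such that $(\mathrm{H}[\ca{T}])$ forces $R[t]$ to be Hermite for every local ring $R$. Throughout I will use the standard reformulation (see \cite[Ch.~I]{LAM}) that a ring $B$ is Hermite if and only if every unimodular row over $B$ is completable, together with the elementary reduction of an arbitrary finitely generated stably free module to the kernel of a unimodular row. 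Theorem~\ref{thm:Hermite_equivalence} is available to move freely between $R[t]$, $R[t,t^{-1}]$, $R\langle t\rangle$ and $R(\!(t)\!)$ whenever $R$ is local, which I will exploit when it is convenient to have the extra units supplied by $t$.

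First I would set up the universal rings. Suppose some local $R$ has $R[t]$ not Hermite. Then there is a non-completable unimodular row $(f_1,\dots,f_n)$ over $R[t]$, witnessed by cofactors $g_1,\dots,g_n \in R[t]$ with $\sum_i f_i g_i = 1$; let $d$ and $e$ bound the $t$-degrees of the $f_i$ and the $g_i$. Introducing indeterminates $a_{ij}$ ($0 \le j \le d$) and $b_{ik}$ ($0 \le k \le e$) for the coefficients, the identity $\sum_i f_i g_i = 1$ becomes a finite list of polynomial equations in the $a_{ij}, b_{ik}$ (one for each power of $t$), and I define $S_{n,d,e}$ to be the quotient of $\mathbb{Z}[a_{ij},b_{ik}]$ by these equations. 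By construction the tautological row is unimodular over $S_{n,d,e}[t]$, and the coefficients of our failure determine a classifying map $S_{n,d,e} \to R$ pulling it back to $(f_1,\dots,f_n)$. Since completability is preserved under arbitrary base change, the tautological row cannot be completable over $S_{n,d,e}[t]$---otherwise its image over $R[t]$ would be completable. Hence $S_{n,d,e}[t]$ is not Hermite. As $(n,d,e)$ ranges over $\mathbb{N}^3$ there are only countably many such rings, and the argument just given shows that $\{S_{n,d,e}\}$ is a countable test set of finitely generated $\mathbb{Z}$-algebras.

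It remains to replace each $S_{n,d,e}$ by a ring satisfying (i)--(iii). For (ii) I would verify that the defining equations of $S_{n,d,e}$ (suitably presented, after adjoining a bounded number of slack variables if necessary) form a regular sequence in the surrounding polynomial ring by a standard dimension count, so that $S_{n,d,e}$ is a global complete intersection. The grading in (i) is more delicate, because the equation coming from the constant term, $\sum_i a_{i0}b_{i0}=1$, is inhomogeneous and cannot be made homogeneous with $A_0=\mathbb{Z}$. I would therefore pass to the homogenization: adjoin a variable $z$ and replace each equation by its degree-$2$ homogenization, e.g.\ $\sum_i a_{i0}b_{i0}=z^2$, giving an $\mathbb{N}$-graded global complete intersection $\widetilde{S}_{n,d,e}$ with all generators in degree $1$ and degree-$0$ part $\mathbb{Z}$. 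Here $\widetilde{S}_{n,d,e}/(z-1)\cong S_{n,d,e}$ and $\widetilde{S}_{n,d,e}[z^{-1}]\cong S_{n,d,e}[z,z^{-1}]$. The transfer of non-Hermiteness is then organized around the elementary descent principle that the Hermite property passes to retracts: applying the ring map $z\mapsto 1$ shows that if $C[z,z^{-1}]$ is Hermite then so is $C$. Taking $C=S_{n,d,e}[t]$ reduces the implication ``$\widetilde{S}_{n,d,e}[t]$ Hermite $\Rightarrow S_{n,d,e}[t]$ Hermite'' to the invariance of the Hermite property under the single localization $\widetilde{S}_{n,d,e}[t]\to\widetilde{S}_{n,d,e}[t][z^{-1}]$, and it is precisely here that the equivalences of Theorem~\ref{thm:Hermite_equivalence} enter to handle the interaction of this localization with the variable $t$.

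Finally, for (iii) I would arrange factoriality by a standard high-dimensional argument: after adjoining further (integral) generic terms to increase the number of variables while keeping the classifying map to $S_{n,d,e}$ intact, the associated projective complete intersection can be made to have singular locus of large codimension, whence its affine cone $\widetilde{S}_{n,d,e}$ is a unique factorization domain by the Grothendieck--Lefschetz theorem (equivalently Samuel's criterion for factoriality of complete intersections). The main obstacle is exactly the tension highlighted above: the grading condition $A_0=\mathbb{Z}$ forbids a unit witnessing unimodularity, so no naive homogenization of $S_{n,d,e}$ carries a genuine unimodular row, and one must instead certify that the graded models $\widetilde{S}_{n,d,e}$ still detect every failure. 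Making this transfer rigorous---controlling the localization step via Theorem~\ref{thm:Hermite_equivalence} and simultaneously controlling the singular locus so that factoriality holds---is the crux of the proof; the complete-intersection and countability statements are comparatively routine once the universal rings are in hand.
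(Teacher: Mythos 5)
Your first step (the countable family of finitely presented universal rings $S_{n,d,e}$ carrying tautological unimodular rows, plus Quillen patching) is sound and parallels the paper's use of universal rings. But the crux --- getting from those rings to ones satisfying (i)--(iii) --- has a genuine gap, which you yourself flag but do not close. Your homogenized ring $\widetilde{S}_{n,d,e}$ (with $\sum_i a_{i0}b_{i0}=z^2$) carries no unimodular row at all over $\widetilde{S}_{n,d,e}[t]$, so the non-Hermiteness you need to detect cannot be transported by base change; instead you must \emph{descend} the Hermite property along the localization $\widetilde{S}_{n,d,e}[t]\to\widetilde{S}_{n,d,e}[t][z^{-1}]$, and no cited result gives that. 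Theorem~\ref{thm:Hermite_equivalence} cannot ``handle the interaction of this localization with the variable $t$'': it concerns a \emph{local} base ring $R$ and localization at $t$-type elements (monic polynomials, powers of $t$), not localization of a non-local coefficient ring at a coefficient variable $z$. Whether Hermiteness passes to such localizations is exactly the kind of open question this circle of ideas is trying to circumvent, so the step ``$\widetilde{S}[t]$ Hermite $\Rightarrow S[z,z^{-1}][t]$ Hermite'' is unsupported.

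The paper resolves this tension by a different mechanism, which is the real content you are missing: instead of homogenizing the polynomial-ring equation $\sum_i f_ig_i=1$, it uses the equivalence $(i)\Leftrightarrow(iv)$ of Theorem~\ref{thm:Hermite_equivalence} to pass to the formal Laurent series ring $R(\!(t)\!)$, where $t$ is already a unit. There the universal unimodularity condition can be normalized to $\sum_i x_iy_i=\overline{p_k}\,t^k$, so the defining relations of the universal ring are precisely the vanishing of the degree-$n$ coefficients $p_n$ for $n\neq k$ --- all \emph{homogeneous of degree $2$} in the coefficient variables. Hence the test rings $B^{\mathbb{Z}}_{r,k,n}$ are $\mathbb{N}$-graded with $A_0=\mathbb{Z}$ with no homogenization step, and the only localization required (inverting $\overline{p_k}$, and passing to local rings) goes in the tractable direction, from a ring to its localization, handled by a Roitman-type statement (Proposition~\ref{prop:unimodular_row_and_localization}) for rows in $\mathrm{Um}_{r+1}\bigl(A[t],(t)\bigr)$. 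Separately, your treatment of (ii) and (iii) is also too thin: the regular-sequence and UFD claims are not ``comparatively routine'' --- the paper devotes an intricate induction (Nagata's criterion plus explicit variable reductions, Theorem~\ref{thm:building_blocks_ufd}) to them --- and your proposal to perturb the equations generically to shrink the singular locus would change the rings and break the very universal property that makes them a test set, besides invoking Grothendieck--Lefschetz-type factoriality results in a mixed-characteristic setting over $\mathbb{Z}$ where they are not off-the-shelf.
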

 
 If $(\mathrm{H}[\ca{H}])$ holds, then for each $\mathbb{N}$-graded commutative ring $A=A_0 \oplus A_1 \oplus \ldots$, each stably free $A$-module is extended from $A_0$ by the Swan--Weibel Homotopy Trick (see Proposition~\ref{prop:graded_homotopy_trick}). In particular, $(\mathrm{H}[\ca{H}])$ implies that $A$ is Hermite if $A_0 \cong \mathbb{Z}$. Thus the reverse implication
 \[
 (\mathrm{H}[\ca{H}]) \Rightarrow (H[\ca{T}])
 \]
 also holds for the set $\ca{T}$ of Theorem~\ref{thm:test_set_of_complete_intersections}.
 
 A more conservative conjecture than $(\mathrm{H}[\ca{H}])$ is that $(\mathrm{H}[\ca{H}_{\mathbb{Q}}])$ holds, where $\ca{H}_{\mathbb{Q}}$ is the class of commutative Hermite rings which are also $\mathbb{Q}$-algebras. This is closely related to a problem posed by Suslin, see \cite[Problem~4]{SUSLIN}. One can also find test sets for $(\mathrm{H}[\ca{H}_{\mathbb{Q}}])$ and for Suslin's Problem (see \S \ref{section:reduction}). By contemplating the simplest rings in the respective test sets, one is naturally led to the following question.
 
 \begin{question}
 Let $r \geq 2$ be a natural number and let $A$ be either one of the rings
 \[
 \mathbb{Z}[x_0, \ldots, x_r, y_0, \ldots, y_r] \slash \textstyle \sum_{i=0}^r x_i y_i \quad \text{or}  \quad \mathbb{Q}[x_0, \ldots, x_r, y_0, \ldots, y_r] \slash \textstyle \sum_{i=0}^r x_i y_i \smash{\rlap{.}}
 \]
 Is every finitely generated stably free $A[t]$-module free?
 \end{question}
 
 \section*{Acknowledgements}
 I thank Niko Naumann for suggesting several improvements to the exposition.
\section{Unimodular rows over formal Laurent series rings}\label{section:formal}

 We start with a few recollections. To show that all finitely generated stably free modules over a commutative ring $A$ are free, it suffices to prove this for finitely generated stably free modules of type $1$, that is, modules $P$ such that $P \oplus A \cong A^{r+1}$ for some $r \in \mathbb{N}$. Indeed, applying this $\ell$ times we find that a module $P$ such that $P\oplus A^{\ell} \cong A^{r+\ell}$ has to be free. The stably free modules of type $1$ are precisely the kernels of epimorphisms
 \[
 a =(a_0, \ldots, a_r) \colon A^{r+1} \rightarrow A \smash{\rlap{,}}
 \]
 which are called \emph{unimodular rows} of length $r+1$. Note that such an epimorphism is necessarily split. We denote the set of unimodular rows of length $r+1$ by $\mathrm{Um}_{r+1}(A)$. The kernel of a unimodular row is free if and only if the unimodular row can be completed to an invertible matrix over $A$. We call such unimodular rows \emph{completable}.
 
 Matrix multiplication defines an action of $\mathrm{GL}_{r+1}(A)$ on $\mathrm{Um}_{r+1}(A)$. Given two unimodular rows $a$ and $b$ of length $r+1$, we write $a \sim b$ if there exists a matrix $M \in \mathrm{GL}_{r+1}(A)$ such that $a =b \cdot M$. Clearly $a$ is completable if and only if $a \sim (1,0,\ldots,0)$. In particular, all stably free $A$-modules are free if and only if the group $\mathrm{GL}_{r+1}(A)$ acts transitively on $\mathrm{Um}_{r+1}(A)$ for all $r \geq 1$. It is well-known that this is always the case if $r=1$, so we may restrict attention to the case $r \geq 2$.

 Recall that a polynomial $\sum_{i=0}^n a_i t^i$ over a commutative local ring $(R, \mathfrak{m})$ is called a \emph{Weierstrass polynomial} if it is monic and $a_i \in \mathfrak{m}$ for $0 \leq i <n$. We will use the following basic fact about Weierstrass polynomials.
 
 \begin{lemma}\label{lemma:top-bottom}
 Let $(R, \mathfrak{m})$ be a commutative local ring and let $f=\sum_{i=0}^n b_i t^i$ be a polynomial in $R[t]$ such that $b_0$ is a unit. Then for any Weierstrass polynomial $g$, the ploynomials $f$ and $g$ are comaximal (that is, the ideal generated by $f$ and $g$ is the whole ring).
 \end{lemma}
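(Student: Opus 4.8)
The plan is to translate comaximality into a statement about units in the quotient algebra and then detect that unit after reducing modulo $\mathfrak{m}$. Write $g = \sum_{i=0}^{m} a_i t^i$ with $a_m = 1$ and $a_i \in \mathfrak{m}$ for $i < m$, and set $A \defl R[t]/(g)$. Since $g$ is monic of degree $m$, the ring $A$ is a free $R$-module with basis $1, t, \ldots, t^{m-1}$; in particular it is finitely generated as an $R$-module. The ideal generated by $f$ and $g$ is all of $R[t]$ if and only if the image of $f$ generates the unit ideal of $A$, i.e.\ if and only if $(f) = A$. So the whole lemma reduces to showing that $f$ becomes a unit (equivalently, a generator) in $A$.

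The key observation is that reduction modulo $\mathfrak{m}$ collapses the Weierstrass polynomial to a power of $t$. Writing $k = R/\mathfrak{m}$ for the residue field, the Weierstrass condition $a_i \in \mathfrak{m}$ for $i<m$ gives $g \equiv t^m$ in $k[t]$, so that $A/\mathfrak{m}A \cong k[t]/(t^m)$. This is a local Artinian ring whose maximal ideal is $(t)$, and an element is a unit in it precisely when its constant term is nonzero. The image of $f$ in $A/\mathfrak{m}A$ has constant term $\bar b_0 \in k$, which is nonzero because $b_0$ is a unit in $R$. Hence $f$ is a unit in $A/\mathfrak{m}A$; equivalently, the image of $f$ generates $A/\mathfrak{m}A$, which says exactly that $A = \mathfrak{m}A + (f)$.

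I would then finish with Nakayama's lemma. Setting $M \defl A/(f)$, which is finitely generated as an $R$-module since $A$ is, the identity $A = \mathfrak{m}A + (f)$ yields $\mathfrak{m}M = M$. As $R$ is local and $M$ is finitely generated, Nakayama forces $M = 0$, i.e.\ $(f) = A$, which is the desired comaximality. (The degenerate case $m = 0$ is trivial, since a monic polynomial of degree $0$ is $g = 1$. One could equally phrase the last step by noting that $\mathfrak{m}A$ lies in the Jacobson radical of the finite $R$-algebra $A$, so that unit-detection modulo $\mathfrak{m}A$ lifts to $A$.) There is no genuine obstacle here: the only point requiring care is the identification $A/\mathfrak{m}A \cong k[t]/(t^m)$, after which the argument is immediate.
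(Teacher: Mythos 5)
Your proof is correct. Note, however, that the paper does not actually prove this lemma: it simply cites it as part of the Top--Bottom Lemma in Lam's book (Lemma~IV.5.3 of \emph{Serre's Problem on Projective Modules}), so your argument is a self-contained replacement for that citation rather than a parallel to an in-paper argument. Your route --- passing to the finite free $R$-algebra $A = R[t]/(g)$, observing that the Weierstrass condition forces $A/\mathfrak{m}A \cong k[t]/(t^m)$ where the image of $f$ is visibly a unit, and then lifting via Nakayama (equivalently, via $\mathfrak{m}A \subseteq \mathrm{rad}(A)$) --- is essentially the standard proof of the ``bottom'' half of Lam's lemma, and every step checks out: $A$ is finitely generated as an $R$-module because $g$ is monic, the identification $A/\mathfrak{m}A \cong k[t]/(t^m)$ is exactly as you say, and $M = A/(f)$ satisfies the hypotheses of Nakayama. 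What your write-up buys is independence from the reference and transparency about where each hypothesis enters ($b_0$ a unit gives the unit constant term after reduction; the Weierstrass condition collapses $g$ to $t^m$; locality of $R$ powers Nakayama); what the citation buys the paper is brevity and the companion ``top'' statement of Lam's lemma, which is not needed here.
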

 
 \begin{proof}
 This is part of the Top-Bottom Lemma \cite[Lemma~IV.5.3]{LAM}.
 \end{proof}
 
 The crucial step in the proof of Theorem~\ref{thm:Hermite_equivalence} is the observation that the $\mathrm{GL}_{r+1}$-orbit of a unimodular row over the formal Laurent series ring always contains a unimodular row over the Laurent polynomial ring. This is the content of the following elementary lemma, which is proved in three steps. First one notes that the orbit contains a unimodular row whose reduction modulo the maximal ideal consists of entries which are all $0$ except for a single entry $1$. In the second step, we use an auxiliary matrix in $\mathrm{GL}_{r+1}\bigl(R\llbracket t \rrbracket\bigr)$ whose (mulitplicative) action on the unimodular row has the effect of an addition in each entry. This is the key step in the proof and yields a row whose entries are polynomials. The final step is the proof that this row is already unimodular over the ring of Laurent polynomials. This last step relies on Lemma~\ref{lemma:top-bottom} above.
 
 \begin{lemma}\label{lemma:reduction_step}
 Let $(R,\mathfrak{m})$ be a local ring. Let $r \geq 2$ and let $(x_0, \ldots, x_r)$ be a unimodular row over $R(\!(t)\!)$. Then the $\mathrm{GL}_{r+1}$-orbit of $(x_0, \ldots, x_r)$ contains the image of a unimodular row $(p_0, \ldots, p_r)$ over $R[t,t^{-1}]$ with the additional properties that $p_0$ is a Weierstrass polynomial and $p_i$ is a polynomial in $R[t]$ for $i \geq 1$.
 \end{lemma}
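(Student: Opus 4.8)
The plan is to follow the three steps indicated above: reduce everything to the behaviour of the row modulo $\mathfrak{m}$, then use a single ``addition'' matrix to pass to polynomials, and finally exploit division by a monic polynomial.

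First I would clear denominators: since $t$ is a unit in $R(\!(t)\!)$, multiplying by a suitable power of $t$ moves all entries into $R\llbracket t \rrbracket$ without leaving the orbit. Reducing coefficients modulo $\mathfrak{m}$ then yields a row over the discrete valuation ring $k\llbracket t \rrbracket$, where $k=R/\mathfrak{m}$. As this is a principal ideal domain, the Euclidean algorithm applied to a single row produces $\bar{G}\in \mathrm{GL}_{r+1}(k\llbracket t \rrbracket)$ carrying it to $(t^e,0,\dots,0)$, with $t^e$ the greatest common divisor of the entries. Because $R\llbracket t \rrbracket$ is local, \emph{any} lift $G$ of $\bar G$ with entries in $R\llbracket t \rrbracket$ is automatically invertible (its determinant reduces to a unit, hence is a unit). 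Replacing the row by $(x_0,\dots,x_r)G$ and scaling by a unit of $R$, I may assume $y_0=t^e+\sum_{j\neq e}b_jt^j$ with all $b_j\in\mathfrak{m}$, and $y_i\in\mathfrak{m}\llbracket t\rrbracket$ for $i\geq 1$. The Weierstrass polynomial that should eventually appear is visible already: $p_0:=t^e+\sum_{j<e}b_jt^j$.

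The heart of the proof is the second step, converting $y$ into a polynomial row $(p_0,p_1,\dots,p_r)$, where each $p_i$ is a suitable truncation of $y_i$. The mechanism is the rank-one transvection trick: if $v$ is a section of the unimodular row $y$, that is a column with $yv=1$ (which exists over $R(\!(t)\!)$), then for any correction row $c$ the matrix $M=I+vc$ satisfies $yM=y+(yv)c=y+c$, so it adds $c$ entrywise, and $M$ is invertible exactly when $1+cv$ is a unit. Choosing $c$ to be the negatives of the tails of the $y_i$, and in the first entry everything above degree $e$, makes $y+c=(p_0,p_1,\dots,p_r)$ a row of polynomials with $p_0$ Weierstrass and $p_i\in R[t]$. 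The delicate point, and the step I expect to be the main obstacle, is to arrange simultaneously that the correction is divisible by a high enough power of $t$ for $M$ to land in $\mathrm{GL}_{r+1}(R\llbracket t\rrbracket)$ (equivalently, for $1+cv$ to be a unit) and that the first entry is trimmed all the way down to degree $e$. This is precisely the interplay between the $t$-adic order of the tails and the order of the denominators of the section $v$, and controlling it is where both the unimodularity over $R(\!(t)\!)$ and the normalization $\bar y_0=t^e$ must be used.

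Finally, for the third step I would show that the polynomial row, which is unimodular over $R(\!(t)\!)$ since it lies in the orbit, is already unimodular over $R[t,t^{-1}]$. The key input is that $p_0$ is monic: division by a monic polynomial identifies $R\llbracket t\rrbracket/(p_0)$ with $R[t]/(p_0)$, and inverting $t$ gives canonical identifications
\[
R(\!(t)\!)/(p_0)\;\cong\;\bigl(R[t]/(p_0)\bigr)[t^{-1}]\;\cong\;R[t,t^{-1}]/(p_0).
\]
Since a tuple $(p_0,p_1,\dots,p_r)$ is unimodular over a ring $B$ if and only if $(p_1,\dots,p_r)$ is unimodular over $B/(p_0)$, and the two quotients above coincide, unimodularity over $R(\!(t)\!)$ is equivalent to unimodularity over $R[t,t^{-1}]$, as required. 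This division-by-a-Weierstrass-polynomial input is the content I would extract from the Top-Bottom Lemma~\ref{lemma:top-bottom}, and it finishes the argument.
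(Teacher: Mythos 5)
Your overall architecture (normalize modulo $\mathfrak{m}$, add corrections via a rank-one matrix $I+vc$, then descend to $R[t,t^{-1}]$ using that $p_0$ is Weierstrass) matches the paper's, but there are two genuine gaps. The first is the point you yourself flag as ``the main obstacle'' in step 2, and with your chosen normalization it is not a routine check but a real obstruction. Because you normalize to $\bar y_0 = t^e$, the coefficient of $t^e$ in $y_0$ is a unit, so the only way $y_0 + c_0$ can be a Weierstrass polynomial is to truncate at degree exactly $e$; hence $c_0$ is divisible by $t^{e+1}$ and by no higher power in general. On the other hand, invertibility of $M = I + vc$ requires control of $cv$, and the pole orders of a section $v$ (equivalently, the least $N$ for which $\sum y_i v_i' = t^N$ is solvable with $v_i' \in R\llbracket t\rrbracket$) are in no way bounded in terms of $e$. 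The paper resolves exactly this tension by normalizing differently: it arranges $\bar x_0 = 0$ (every coefficient of $x_0$ lies in $\mathfrak{m}$) and $\bar x_1 = 1$, then scales so that all $x_i, y_i \in R\llbracket t\rrbracket$ with $\sum x_iy_i = t^k$, and uses the same rank-one trick with correction row $(td_0,\ldots,td_r)$, $d_i \in R\llbracket t\rrbracket$; this matrix lies in $\mathrm{GL}_{r+1}\bigl(R\llbracket t\rrbracket\bigr)$ since its determinant is $\equiv 1 \pmod t$, and it permits \emph{arbitrary} corrections divisible by $t^{k+1}$. The Weierstrass polynomial is then manufactured by adding the leading term $t^{k+1}$ --- possible precisely because all lower coefficients of $x_0$ lie in $\mathfrak{m}$. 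With a unit sitting in degree $e$ of $y_0$, this move is unavailable, so you would need to change your step 1 normalization from $(t^e,0,\ldots,0)$ to $(0,1,0,\ldots,0)$ for step 2 to go through.

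The second gap is an outright false claim in step 3: division by a monic polynomial does \emph{not} identify $R\llbracket t\rrbracket/(p_0)$ with $R[t]/(p_0)$ over a general local ring. For the monic polynomial $p_0 = t-1$ the right-hand side is $R$ while the left-hand side is $0$ (since $t-1$ is a unit in $R\llbracket t\rrbracket$); and even for Weierstrass $p_0$ the identification requires $R$ to be $\mathfrak{m}$-adically complete. Take $R = \mathbb{Z}_{(p)}$ and $p_0 = t-p$: then $R[t]/(p_0) \cong \mathbb{Z}_{(p)}$ while $R\llbracket t\rrbracket/(p_0) \cong \mathbb{Z}_p$, so after inverting $t$ your two quotients are $\mathbb{Q}$ and $\mathbb{Q}_p$, which are not isomorphic. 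All one has is a canonical map $R[t,t^{-1}]/(p_0) \rightarrow R(\!(t)\!)/(p_0)$, which yields only the trivial direction (unimodular over $R[t,t^{-1}]$ implies unimodular over $R(\!(t)\!)$), not the direction you need. The paper's third step is elementwise and avoids quotient rings entirely: from $\sum p_i z_i = t^{\ell}$ with $z_i \in R\llbracket t\rrbracket$, truncate the $z_i$ to polynomials $\widetilde{z_i}$ so that $\sum p_i \widetilde{z_i} = t^{\ell}(1+tz)$ holds in $R[t]$, and then apply Lemma~\ref{lemma:top-bottom} (comaximality of $1+tz$ and the Weierstrass polynomial $p_0$) to conclude that $t^{\ell}$ lies in the ideal $(p_0,\ldots,p_r)$ of $R[t]$, hence the row is unimodular over $R[t,t^{-1}]$. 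You should replace your quotient-ring identification by this truncation argument.
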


\begin{proof}
 Let $(x_0, \ldots, x_r) \in \mathrm{Um}_{r+1}\bigl( R (\!(t)\!) \bigr)$ be a unimodular row. We denote the surjection $ R(\!(t)\!) \rightarrow R \slash \mathfrak{m}  (\!(t)\!)$, $\sum_{i \geq i_0} a_i t^i \mapsto \sum_{i \geq i_0} \overline{a_i} t^i$ by $a \mapsto \overline{a}$. By applying this surjection we obtain a unimodular row $(\overline{x_0}, \ldots, \overline{x_r})$ over the field $R \slash \mathfrak{m} (\!(t)\!)$. Via an elementary matrix $E$ we can change $(\overline{x_0}, \ldots, \overline{x_r})$ to the row $(0,1,0, \ldots, 0)$. Since the matrix $E$ is elementary, it can be lifted to $R(\!(t)\!)$, so we can without loss of generality assume that $\overline{x_0}=0$, $\overline{x_1}=1$, and $\overline{x_i}=0$ for $i \geq 2$.
 
Next, we want to show that there exists a unimodular row in the same $\mathrm{GL}_{r+1}$-orbit whose entries are polynomials. We choose a unimodular row $(y_0, \ldots, y_r) \in \mathrm{Um}_{r+1}\bigl( R (\!(t)\!) \bigr)$ such that the equation $\sum_{i=0}^r x_i y_i=1$ holds. After multiplying both $(x_0, \ldots, x_r)$ and $(y_0, \ldots, y_r)$ by a sufficiently high power of $t$, we can assume that each $x_i$ and each $y_j$ lies in the ring $R \llbracket t \rrbracket$ of formal power series over $R$ and $\sum_{i=0}^r x_i y_i=t^k$ for some $k \in \mathbb{N}$. By construction, we still have $\overline{x_0}=0$ in $R \slash \mathfrak{m} \llbracket t \rrbracket$, that is, all coefficients of $x_0$ lie in $\mathfrak{m}$.
 
We now construct an auxiliary matrix which has the effect of adding (respectively subtracting) terms of sufficiently high degree to each entry of the unimodular row. Let $d_0, \ldots, d_r$ be arbitrary elements of $R \llbracket t \rrbracket$ and let $I_{r+1} \in \mathrm{GL}_{r+1}\bigl(R \llbracket t \rrbracket \bigr) $ be the identity matrix. We claim that the matrix
 \[
 M_{d_0, \ldots, d_r} \defl I_{r+1} +
 \begin{pmatrix}
 y_0 & y_0 & \cdots & y_0 \\
 y_1 & y_1 & \cdots & y_1 \\
\vdots & \vdots & \ddots & \vdots \\
 y_r & y_r & \cdots & y_r \\
 \end{pmatrix}
 \begin{pmatrix}
 td_0 & 0 & \cdots & 0 \\
 0 & td_1 & \cdots & 0 \\
 \vdots & \vdots & \ddots & \vdots \\
 0 &  0  & \cdots & td_r
 \end{pmatrix}
 \]
 lies in $\mathrm{GL}_{r+1}\bigl(R \llbracket t \rrbracket \bigr)$. This follows from the fact that 
 \[
 \mathrm{det}(M_{d_0,\ldots,d_r}) \equiv 1 \pmod t
 \]
  and therefore the determinant is a unit in $R \llbracket t \rrbracket$.
 
 Since
 \[
 (x_0, \ldots, x_r) M_{d_0, \ldots, d_r} =(x_0+t^{k+1}d_0, \ldots, x_r+t^{k+1} d_r) \smash{\rlap{,}}
 \]
 we find that there is a unimodular row $(p_0, \ldots, p_r)$ in the same $\mathrm{GL}_{r+1}$-orbit as $(x_0, \ldots, x_r)$ with the additional properties that $p_0$ is a Weierstrass polynomial of degree $k+1$ and $p_i$ is a polynomial of degree $k$ for all $i \geq 1$ (by using a suitable choice of the elements $d_i$).
 
It remains to show that $(p_0, \ldots, p_n)$ is a unimodular row over $R[t,t^{-1}]$. Since $(p_0, \ldots, p_r)$ is unimodular over $R(\!(t)\!)$, there exists a natural number $\ell \in \mathbb{N}$ and elements $z_i \in R \llbracket t \rrbracket$ such that $\sum_{i=0}^r p_i z_i=t^{\ell}$ holds. If we let $\widetilde{z_i} \in R[t]$ be the truncation of $z_i$ at degree $\ell +1$, we find that there exists a $z \in R[t]$ such that the equation
 \[
 \sum_{i=0}^r p_i \widetilde{z_i} = t^{\ell}+t^{\ell+1} z
 \]
 holds. Indeed, the left hand side is a polynomial, which has to be of the form given on the right since only terms of degree $\ell+1$ and higher are affected by the truncation of the $z_i$. Thus $t^{\ell}(1+tz)$ lies in the ideal of $R[t]$ generated by the polynomials $p_0, \ldots, p_r$. The polynomial $t^{\ell} p_0$ also lies in this ideal, so $t^{\ell}$ lies in the ideal since $1+tz$ and $p_0$ are comaximal (see Lemma~\ref{lemma:top-bottom}). Thus $(p_0, \ldots, p_r)$ is a unimodular row in the ring $R[t,t^{-1}]$ whose first entry is a Weierstrass polynomial.
\end{proof} 
 
 Our proof of Theorem~\ref{thm:Hermite_equivalence} uses two more ingredients. The first is Horrocks' Theorem, which we cite in its algebraic form.

\begin{thm}[Horrocks]\label{thm:Horrocks}
 Let $R$ be a commutative local ring and let $P$ be a finitely generated projective $R[t]$-module. If $R \langle t \rangle \ten{R[t]} P$ is free, then $P$ is free.
\end{thm}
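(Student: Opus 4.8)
The plan is to convert the hypothesis into a gluing problem along a single monic polynomial and to reduce the freeness of $P$ to a matrix factorization over a punctured formal neighbourhood. First I would carry out two routine reductions. Since $R$ is local it has no nontrivial idempotents, hence neither does $R[t]$, so $\Spec R[t]$ is connected and $P$ has a constant rank $n$. Because $P$ is finitely presented and $R\langle t\rangle$ is the filtered colimit of the localizations $R[t]_f$ over monic $f$ (ordered by divisibility), an $R\langle t\rangle$-linear isomorphism $R\langle t\rangle^n \xrightarrow{\sim} R\langle t\rangle \ten{R[t]} P$ together with its inverse are already defined over some $R[t]_f$; after enlarging $f$ so that both composites become identities, we may assume that $P_f \defl R[t]_f \ten{R[t]} P$ is free of rank $n$ for a \emph{single} monic $f$. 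Choosing an $R[t]_f$-basis of $P_f$ and clearing denominators produces a homomorphism $\alpha \colon R[t]^n \to P$ which becomes an isomorphism after inverting $f$. As $f$ is monic it is a non-zero-divisor, so $R[t] \hookrightarrow R[t]_f$ is injective and hence so is $\alpha$; we thus obtain a short exact sequence $0 \to R[t]^n \xrightarrow{\alpha} P \to C \to 0$ whose cokernel $C$ vanishes after inverting $f$.

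Next I would record the structural features of $C$ that make the problem tractable. Being finitely generated and annihilated by $f^N$ for some $N$, the module $C$ is a module over $R[t] \slash f^N$; since $f$ is monic, $R[t] \slash f^N$ is a finite free $R$-module, so $C$ is finite over the local ring $R$ and is supported on the closed subscheme $V(f) \subseteq \Spec R[t]$, which is finite over $\Spec R$. This properness over the base is the geometric reason the argument closes up. Two trivializations are then available. Over $R[t]_f$ the module $P$ is free by construction. Over the $f$-adic completion $\widehat{R[t]}$ the ideal $(f)$ lies in the Jacobson radical and the quotient $R[t]\slash f$ is finite over the local ring $R$, hence semilocal; consequently $\widehat{R[t]}$ is again semilocal, and the projective module $P \ten{R[t]} \widehat{R[t]}$ is therefore free as well.

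Finally I would glue. Formal gluing along the non-zero-divisor $f$ reconstructs $P$ from its restriction to $\Spec R[t]_f$, its restriction to the formal neighbourhood $\Spec \widehat{R[t]}$, and a patching isomorphism over the overlap $\widehat{R[t]}_f = \widehat{R[t]}[f^{-1}]$. As both restrictions are free, $P$ is encoded by a single matrix $g \in \mathrm{GL}_n\bigl(\widehat{R[t]}_f\bigr)$, and $P$ is free precisely when $g$ can be written as a product $g = g_{+}\, g_{-}$ with $g_{+} \in \mathrm{GL}_n\bigl(\widehat{R[t]}\bigr)$ and $g_{-} \in \mathrm{GL}_n\bigl(R[t]_f\bigr)$. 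The theorem thus reduces to showing that the transition matrix of a genuine vector bundle on the affine line over the local base $R$ always admits such a factorization, and this is the step I expect to be the main obstacle. It is a Birkhoff-type, matrix-valued refinement of Weierstrass preparation: one clears the polar part of $g$ against powers of the monic $f$, and the comaximality of a Weierstrass polynomial with a factor of unit constant term—exactly the phenomenon isolated in Lemma~\ref{lemma:top-bottom}—is what keeps the successive corrections invertible.

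Because the entire difficulty is concentrated in this classical factorization, and because assembling all of it (including the non-Noetherian subtleties of the gluing for a general local ring) is lengthy, the expedient alternative—and the route taken here—is to cite the theorem in the algebraic form established in Lam's treatment of Serre's problem.
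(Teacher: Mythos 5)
Your proof is, in the end, the same as the paper's: both rest on a citation to the literature, the paper invoking Horrocks' original geometric theorem (\cite[Theorem~1]{HORROCKS}) together with \cite[Proposition~IV.2.3]{LAM} for its equivalence with the algebraic form, while you invoke Lam's algebraic form directly. The formal-gluing sketch you give along the way (descent of the trivialization to a single $R[t]_f$ with $f$ monic, semilocality of the $f$-adic completion, patching along the non-zero-divisor $f$, and a Birkhoff-type factorization of the transition matrix) is a sound outline of a genuinely different proof strategy, but since you explicitly concede the factorization step where all the mathematical content lies, it does not constitute an independent argument, and the citation carries the proof just as it does in the paper.
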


\begin{proof}
 Horrocks' Theorem states that $P$ is free if it extends to a vector bundle on the projective line over $R$ (\cite[Theorem~1]{HORROCKS}). That this is equivalent to the algebraic form stated above is proved in \cite[Proposition~IV.2.3]{LAM}.
\end{proof} 
 
 The second ingredient of our proof of Theorem~\ref{thm:Hermite_equivalence} is the following patching technique. Let $\varphi \colon A \rightarrow B$ be a homomorphism of commutative rings and let $s \in A$. We call $\varphi$ an \emph{analytic isomorphism along $s$} if $s$ and $\varphi(s)$ are non-zero-divisors and the induced ring homomorphism
 \[
 A \slash s A \rightarrow B \slash \varphi(s) B
 \]
 is an isomorphism. In this case, the diagram
 \[
 \xymatrix{ B_{\varphi(s)}  & B \ar[l] \\ A_s \ar[u]^{\varphi_s} & A \ar[l] \ar[u]_{\varphi} }
 \]
 is called a \emph{patching diagram}, where $A_s$ and $B_{\varphi(s)}$ denote the localizations at the multiplicative sets $\{1,s,s^2, \ldots\}$ and $\{1,\varphi(s),\varphi(s)^2, \ldots\}$ respectively. The name is justified by the fact that the induced diagram
 \[
 \xymatrix{ \Mod_{B_{\varphi(s)}}^{\fgp}  & \Mod_B^{\fgp} \ar[l] \\ \Mod_{A_s}^{\fgp} \ar[u] & \Mod_{A}^{\fgp} \ar[l] \ar[u] }
 \]
 of categories of finitely projective modules is cartesian, see \cite[\S 2]{ROY}. In particular, a finitely generated projective $A_s$-module $P$ and a finitely generated projective $B$-module $Q$ can be patched to a finitely generated projective $A$-module $P^{\prime}$ if there exists an isomorphism $B_{\varphi(s)} \ten{A_s} P \cong Q_s$. The localization $P^{\prime}_s$ of $P^{\prime}$ at $s$ is then isomorphic to $P$ and the scalar extension $B \ten{A} P^{\prime}$ is isomorphic to $Q$. We will always apply this in the following form: if $P$ is a finitely generated projective $A_s$-module such that $B_{\varphi(s)} \ten{A_s} P$ is free, then there exists a finitely generated projective $A$-module $P^{\prime}$ such that $P \cong P^{\prime}_s$.

 \begin{thm}\label{thm:Laurent}
 Let $(R,\mathfrak{m})$ be a commutative local ring and let $r \geq 2$. Then the following are equivalent:
 \begin{enumerate}
 \item[(i)] All unimodular rows of length $r+1$ over $R[t]$ are completable;
 \item[(ii)] All unimodular rows of length $r+1$ over $R[t,t^{-1}]$ are completable;
 \item[(iii)] All unimodular rows of length $r+1$ over $R\langle t\rangle$ are completable;
 \item[(iv)] All unimodular rows of length $r+1$ over $R(\!(t)\!)$ are completable.
 \end{enumerate}
 \end{thm}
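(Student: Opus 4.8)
The plan is to run the cycle $(iii) \Rightarrow (i) \Rightarrow (iv) \Rightarrow (iii)$ and then to splice in $(ii)$ through the two implications $(ii) \Rightarrow (iv)$ and $(iv) \Rightarrow (ii)$. Throughout I will use that completability of a unimodular row depends only on its $\mathrm{GL}_{r+1}$-orbit and is preserved by every ring homomorphism (a completing matrix maps to a completing matrix), so the only genuine content is the \emph{descent} of completability along the various localizations and completions. The implication $(iii) \Rightarrow (i)$ is then immediate from Horrocks' Theorem: the kernel $P$ of a unimodular row $a$ over $R[t]$ is a finitely generated projective $R[t]$-module, $R\langle t\rangle \ten{R[t]} P$ is the kernel of the image of $a$ over $R\langle t\rangle$, and by $(iii)$ that image is completable, so $R\langle t\rangle \ten{R[t]} P$ is free and Theorem~\ref{thm:Horrocks} makes $P$ free.

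For $(i) \Rightarrow (iv)$ I would feed a unimodular row $x$ over $R(\!(t)\!)$ into Lemma~\ref{lemma:reduction_step}: its orbit contains the image of a row $(p_0, \ldots, p_r)$ over $R[t,t^{-1}]$ with $p_0$ a Weierstrass polynomial of degree $k+1$ and $p_i \in R[t]$ of degree at most $k$. The key observation is that passing to the variable $s = t^{-1}$ converts the monic $p_0$ into a polynomial $q_0 = s^{k+1} p_0(s^{-1}) \in R[s]$ with \emph{unit} constant term (its leading coefficient), while each $q_i = s^{k+1} p_i(s^{-1})$ lands in $sR[s]$. The row $(q_0, \ldots, q_r)$ is the unit multiple $s^{k+1}(p_0,\ldots,p_r)$, hence unimodular over $R[s,s^{-1}] = R[t,t^{-1}]$, so the ideal $I$ it generates in $R[s]$ contains some power $s^M$; since $q_0(0)$ is a unit we have $I + (s) = R[s]$, and together with $s^M \in I$ this forces $I = R[s]$. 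Thus $(q_0, \ldots, q_r)$ is a genuine unimodular row over $R[s] \cong R[t]$, completable by $(i)$, so $(p_0,\ldots,p_r)$ and with it $x$ are completable.

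The reverse descent $(iv) \Rightarrow (iii)$ is where the real work sits, and I would handle it by the patching technique. The idea is to exhibit $R\langle t\rangle$ as a localization of a \emph{local} ring: with $s = t^{-1}$ and the maximal ideal $\mathfrak{M} = \mathfrak{m}R[s] + sR[s]$ one checks that $R\langle t\rangle \cong (R[s]_{\mathfrak{M}})_s$, and that $R[s]_{\mathfrak{M}} \to R\llbracket s \rrbracket$ is an analytic isomorphism along $s$ whose patching corners are exactly $(R[s]_{\mathfrak{M}})_s = R\langle t\rangle$ and $R\llbracket s \rrbracket_s = R(\!(s)\!)$. Given a unimodular row over $R\langle t\rangle$ with kernel $Q$, condition $(iv)$ applied to the Laurent series ring $R(\!(s)\!)$ makes $R(\!(s)\!) \ten{R\langle t\rangle} Q$ free, so the patching yields a finitely generated projective $P_0$ over the local ring $R[s]_{\mathfrak{M}}$ with $Q \cong (P_0)_s$; locality forces $P_0$, and hence $Q$, to be free. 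Note the freeness hypothesis is genuinely used here because $R(\!(t)\!)$ is \emph{not} local, which is precisely the reason $(iv)$ must be invoked rather than derived for free.

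It remains to incorporate $(ii)$. Here $(ii) \Rightarrow (iv)$ is a second direct use of Lemma~\ref{lemma:reduction_step}, since every row over $R(\!(t)\!)$ is orbit-equivalent to the image of a row over $R[t,t^{-1}]$, completed by $(ii)$. For $(iv) \Rightarrow (ii)$ I would spread the kernel $N$ of a row over $R[t,t^{-1}]$ over the projective line: the analytic isomorphisms $R[t] \to R\llbracket t \rrbracket$ and $R[u] \to R\llbracket u \rrbracket$ (with $u = t^{-1}$) along $t$ and $u$, fed the freeness of $R(\!(t)\!) \otimes N$ and $R(\!(u)\!) \otimes N$ supplied by $(iv)$, extend $N$ to finitely generated projective modules over each chart; these glue to a vector bundle on $\mathbb{P}^1_R$, and Horrocks' Theorem in its geometric form forces the $R[t]$-chart to be free, hence $N$ free. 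The main obstacle throughout is exactly this interplay: the local rings $R\llbracket t \rrbracket$ and $R[s]_{\mathfrak{M}}$ make projectives automatically free and patching descend cleanly, whereas the non-local ring $R(\!(t)\!)$ contributes no freeness on its own, so the delicate point is to arrange each patching (or gluing) so that its freeness hypothesis is precisely the completability condition at hand.
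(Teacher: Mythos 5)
Your proof is correct, and it uses exactly the paper's toolkit --- Lemma~\ref{lemma:reduction_step}, Horrocks' Theorem, and patching along analytic isomorphisms --- but deploys it along a different decomposition, with two implications argued genuinely differently. The paper proves the single cycle $(i) \Rightarrow (iv) \Rightarrow (iii) \Rightarrow (ii) \Rightarrow (i)$, whereas you run $(iii) \Rightarrow (i) \Rightarrow (iv) \Rightarrow (iii)$ and splice in $(ii)$ via $(ii) \Leftrightarrow (iv)$; your $(iv) \Rightarrow (iii)$ is the paper's argument verbatim, since your $R[s]_{\mathfrak{M}}$ is the paper's local ring $R[t]_{1+(t)}$ under the substitution $s = t^{-1}$. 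The first real divergence is your $(i) \Rightarrow (iv)$, which is simpler than the paper's: the paper treats the transformed row $(1+sq_0, q_1, \ldots, q_r)$ only as unimodular over $R[s,s^{-1}]$, extends its kernel to a projective $R[s]$-module by a Zariski patching square, and then needs Horrocks to recognize that extension as the kernel of a unimodular row over $R[s]$ before invoking $(i)$; you instead observe that the rescaled row is \emph{already} unimodular over $R[s]$, because the ideal it generates contains both a power $s^M$ (from unimodularity over $R[s,s^{-1}]$) and the polynomial $q_0$ with unit constant term, and these are comaximal --- in effect Lemma~\ref{lemma:top-bottom} applied once more, with $s^M$ playing the role of the Weierstrass polynomial. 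This saves one patching step and one application of Horrocks (the only mild caveat is that the uniform exponent $s^{k+1}$ uses the degree bounds from the \emph{proof} of Lemma~\ref{lemma:reduction_step} rather than its statement, but rescaling each entry by its own power of $s$ works equally well). The second divergence is your $(iv) \Rightarrow (ii)$: you glue the two patched extensions, along $t$ and along $u = t^{-1}$, into a vector bundle on the projective line over $R$ and invoke Horrocks in its geometric form, while the paper derives $(ii)$ from $(iii)$ purely algebraically with one patching square and two applications of the algebraic Horrocks (Theorem~\ref{thm:Horrocks}). Your route buys a direct link between $(ii)$ and $(iv)$ that bypasses $(iii)$, at the cost of appealing to the geometric statement of Horrocks' Theorem, which the paper otherwise only uses inside the proof of its algebraic form; both arguments are sound.
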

 
 \begin{proof}
We will prove the circle of implications $(i) \Rightarrow (iv) \Rightarrow (iii) \Rightarrow (ii) \Rightarrow (i)$.
 
 The implication $(i) \Rightarrow (iv)$. Let $(x_0, \ldots, x_r) \in \mathrm{Um}_{r+1}\bigl( R (\!(t)\!) \bigr)$ be a unimodular row. By Lemma~\ref{lemma:reduction_step}, there exists a unimodular row $(p_0,\ldots, p_r) \in \mathrm{Um}_{r+1} \bigl(R[t,t^{-1}]\bigr)$ which lies in the same $GL_{r+1}\bigl(R(\!(t)\!)\bigr)$-orbit as $(x_0,\ldots, x_n)$. Moreover, we can assume that there exists a $k \in \mathbb{N}$ such that $p_0$ is a Weierstrass polynomial and $p_i$ is a polynomial for $i \geq 1$. Since $R[t,t^{-1}] \subseteq R(\!(t)\!)$, it suffices to show that $(p_0, \ldots, p_r) \in \mathrm{Um}_{r+1} \bigl(R[t,t^{-1}]\bigr)$ is completable to a matrix in $\mathrm{GL}_{r+1} \bigl( R[t,t^{-1}]\bigr)$.
 
 We need to reduce this to the case of unimodular rows over a polynomial ring. To do this, we use a patching diagram to extend the projective module defined by $(p_0, \ldots, p_r)$ to a projective module over the polynomial ring $R[t^{-1}]$. This extension is made possible by the fact that $p_0$ is a Weierstrass polynomial. An application of Horrocks' Theorem will show that this projective module is stably free of type $1$, hence comes from a unimodular row over the polynomial ring. Such a unimodular row is by assumption completable.
 
 Here are the details. Setting $s=t^{-1}$, we can multiply each entry of $(p_0, \ldots, p_r)$ with $s^{k+1}$ by a suitable power $s^{k_i}$ of $s$ to obtain polynomials $q_0, \ldots, q_r \in R[s]$ such that $ s^{k_0} \cdot p_0=1+sq_0$ and $s^{k_i} \cdot p_i =q_i$ for $i=1,\ldots, r$. The unimodular rows
 \[
 (p_0, \ldots, p_r) \quad \text{and} \quad (1+sq_0,q_1, \ldots, q_r)
 \]
 clearly lie in the same $\mathrm{GL}_{r+1}\bigl(R[t,t^{-1}]\bigr)$-orbit. Write $P$ for the projective $R[s,s^{-1}]$-module given by the kernel of the (split) epimorphism
 \[
 (1+sq_0,q_1, \ldots, q_r) \colon R[s,s^{-1}]^{r+1} \rightarrow R[s,s^{-1}]
 \]
 of $R[s,s^{-1}]$-modules.
 
 Note that $s$ and $1+sq_0$ are comaximal. Over the localization $R[s]_{s,1+sq_0}$, the module $P_{1+sq_0}$ is clearly free (since the first entry of the unimodular row is a unit), so we can use any isomorphism
 \[
 P_{1+sq_0} \cong \bigl(R[s]^{r}_{1+sq_0}\bigr)_s
 \]
 and the (Zariski) patching diagram
 \[
 \xymatrix{R[s]_{1+sq_0,s} & R[s]_{1+sq_0} \ar[l] \\ R[s]_s \ar[u] & \ar[l] \ar[u] R[s]}
 \]
 to obtain a projective module $Q$ over $R[s]$ with $Q_s \cong P$ and $Q_{1+sq_0}$ free of rank $r$. It follows that the localization
 \[
 (Q \oplus R[s])_s \cong Q_s \oplus R[s,s^{-1}] \cong P \oplus R[s,s^{-1}] \cong R[s,s^{-1}]^{r+1}
 \]
 of $Q \oplus R[s]$ at $s$ is free of rank $r+1$. Horrocks' Theorem implies that $Q \oplus R[s]$ is free of rank $r+1$ (see Theorem~\ref{thm:Horrocks}). Thus $Q$ is the kernel of a unimodular row over the polynomial ring $R[s]$, so it follows from Assumption~$(i)$ that $Q$ is free. Thus $P \cong Q_s$ is a free $R[s,s^{-1}]$-module. This implies that the unimodular row $ (1+sq_0,q_1, \ldots, q_r)$ is completable to an invertible matrix over the ring $R[s,s^{-1}]=R[t,t^{-1}]$, which concludes the proof that $(p_0, \ldots, p_r) \in \mathrm{Um}_{r+1} \bigl( R[t,t^{-1}] \bigr)$ is completable to a matrix in $\mathrm{GL}_{r+1}\bigl(R[t,t^{-1}]\bigr)$. As already observed above, this implies that the original unimodular row $(x_0, \ldots, x_r) \in \mathrm{Um}_{r+1}\bigl( R(\!(t)\!) \bigr)$ is completable to a matrix in $\mathrm{GL}_{r+1}\bigl(R(\!(t)\!)\bigr)$, which establishes~$(iv)$.
 
  The implication $(iv) \Rightarrow (iii)$. Let $P$ be a projective $R \langle s \rangle$-module of rank $r$ such that $P \oplus R \langle s \rangle$ is free. We need to show that $P$ is free. Let $t=s^{-1}$ and recall that $R \langle s \rangle$ is equal to the localization $R[t]_{1+(t),t}$, see \cite[Proposition~IV.1.4]{LAM}. By Assumption~$(iv)$, the scalar extension of $P$ under the ring homomorphism
  \[
  R \langle s \rangle=R[t]_{1+(t),t} \rightarrow R (\!(t)\!)
  \]
  is a free module. Since $R[t]_{1+(t)} \slash (t)  \rightarrow R \llbracket t \rrbracket \slash (t)$ is an ismorphism, the homomorphism $R[t]_{1+(t)} \rightarrow R \llbracket t \rrbracket$ is an analytic isomorphism along $t$. Thus the diagram
  \[
  \xymatrix{R(\!(t)\!)=R\llbracket t \rrbracket_t & R \llbracket t \rrbracket  \ar[l] \\ 
   R \langle s \rangle=R[t]_{1+(t),t} \ar[u] & R[t]_{1+(t)} \ar[l] \ar[u] }
  \]
 is a patching diagram. It follows that the module $P$ in the lower left corner can be patched with a free module of rank $r$ in the top right corner: there exists a projective $R[t]_{1+(t)}$-module $P^{\prime}$ such that $P^{\prime}_t \cong P$ and $R \llbracket t \rrbracket \otimes P^{\prime}$ is free of rank $r$. Since the ring $R[t]_{1+(t)}$ is local (see \cite[Proposition~IV.1.4]{LAM}), the projective module $P^{\prime}$ must be free of rank $r$, so its localization $P \cong P^{\prime}_t$ is free as well. Thus the unimodular row defining $P$ is completable, which establishes~$(iii)$.
  
 The implication $(iii) \Rightarrow (ii)$. Let $P$ be a projective $R[t,t^{-1}]$-module of rank $r$ such that $P \oplus R[t,t^{-1}]$ is free and let $s=t^{-1}$. Applying~$(iii)$ to the scalar extension of $P$ in the patching diagram
 \[
 \xymatrix{ R \langle s \rangle=R[t]_{1+(t),t} & R[t]_{1+(t)} \ar[l] \\ R[t,t^{-1}] \ar[u] & R[t] \ar[l] \ar[u]}
 \]
 implies that there exists a projective $R[t]$-module $P^{\prime}$ such that $P^{\prime}_t \cong P$. Since
 \[
 (P^{\prime} \oplus R[t])_t \cong P^{\prime}_t \oplus R[t,t^{-1}] \cong P \oplus R[t,t^{-1}]
 \]
 is free, Horrocks' Theorem implies that $P^{\prime} \oplus R[t]$ is free (see Theorem~\ref{thm:Horrocks}). Applying Assumption~$(iii)$ again, we find that $R\langle t \rangle \ten{R[t]} P^{\prime}$ is free. A second application of Horrocks' Theorem thus shows that $P^{\prime}$ is free. Thus $P \cong P^{\prime}_t$ is free, as claimed.
 
 The implication $(ii) \Rightarrow (i)$. This implication is a special case of Horrocks' Theorem (see Theorem~\ref{thm:Horrocks}): if $P$ is a projective $R[t]$-module such that the localization $P_t$ is a free $R[t,t^{-1}]$-module, then $P$ is free.
 \end{proof}

 With this result in hand we can easily prove Theorem~\ref{thm:Hermite_equivalence}, which we restate for the convenience of the reader.
 
 \begin{cit}[Theorem~\ref{thm:Hermite_equivalence}]
  Let $R$ be a commutative local ring. Then the following are equivalent:
 \begin{enumerate}
 \item[(i)] The ring $R[t]$ is Hermite;
 \item[(ii)] The ring $R[t,t^{-1}]$ is Hermite;
 \item[(iii)] The ring $R \langle t \rangle$ is Hermite;
 \item[(iv)] The ring $R(\!(t)\!)$ is Hermite.
 \end{enumerate}
 \end{cit}

 \begin{proof}[Proof of Theorem~\ref{thm:Hermite_equivalence}]
 As recalled at the beginning of this section, all stably free modules over a commutative ring are free if and only if all unimodular rows are completable. Thus Theorem~\ref{thm:Hermite_equivalence} follows from Theorem~\ref{thm:Laurent}. 
 \end{proof}
 
 Theorem~\ref{thm:regular_local_equivalence} follows from Theorem~\ref{thm:Hermite_equivalence}, combined with the observation that the projective modules in question are stably free.

 \begin{cit}[Theorem~\ref{thm:regular_local_equivalence}]
 Let $R$ be a commutative regular local ring. Then the following are equivalent:
 \begin{enumerate}
 \item[(i)] All finitely generated projective $R[t]$-modules are free;
 \item[(ii)] All finitely generated projective $R[t,t^{-1}]$-modules are free;
 \item[(iii)] All finitely generated projective $R\langle t\rangle$-modules are free;
 \item[(iv)] All finitely generated projective $R(\!(t)\!)$-modules are free.
 \end{enumerate}
 \end{cit}
 
 \begin{proof}[Proof of Theorem~\ref{thm:regular_local_equivalence}]
 By Theorem~\ref{thm:Hermite_equivalence}, it suffices to prove that all finitely generated projective modules over the rings $R[t]$, $R[t,t^{-1}]$, $R \langle t \rangle$, and $R(\!(t)\!)$ are stably free. This follows from basic facts about the $K_0$-group of a localization. 
 
 Since $R$ is regular, Grothendieck's Theorem tells us that the homomorphisms
 \[
 K_0(R) \rightarrow K_0(R[t]) \rightarrow K_0(R[t,t^{-1}])
 \]
 are isomorphisms (see \cite[Theorem~XII.3.1]{BASS}). Since $R$ is local, we have $K_0(R) \cong \mathbb{Z}$. This shows that all finitely generated projective modules over the rings $R[t]$ and $R[t, t^{-1}]$ are stably free.
 
 If $A$ is a noetherian commutative ring and $S \subseteq A$ is a multiplicative set, then there exists an exact sequence
 \[
 G_0(A,S) \rightarrow G_0(A) \rightarrow G_0(A_S) \rightarrow 0
 \]
 (see \cite[Proposition~IX.6.1]{BASS}). In particular, $G_0(A) \rightarrow G_0(A_S)$ is an epimorphism. If $A$ is in addition regular, then so is $A_S$ and the Cartan homomorphisms $K_0(A) \rightarrow G_0(A)$ and $K_0(A_S) \rightarrow G_0(A_S)$ are isomorphisms (see \cite[Proposition~IX.2.1]{BASS}).
 
 By \cite[Theorem~19.5]{MATSUMURA}, both $R[t]$ and $R \llbracket t \rrbracket$ are regular since $R$ is a regular ring. Thus there exist epimorphisms
 \[
 K_0 (R[t]) \rightarrow K_0(R \langle t \rangle) \quad \text{and} \quad K_0(R \llbracket t \rrbracket) \rightarrow K_0 \bigl( R(\!(t)\!)\bigr) \smash{\rlap{.}}
 \]
 We already observed that $K_0(R[t]) \cong \mathbb{Z}$, so it follows that $K_0(R \langle t \rangle) \cong \mathbb{Z}$. Since $R \llbracket t \rrbracket$ is local, we have $K_0(R \llbracket t \rrbracket) \cong \mathbb{Z}$ and therefore $K_0 \bigl( R(\!(t)\!)\bigr) \cong \mathbb{Z}$.
 
 This shows that all finitely generated projective modules over the rings $R[t]$, $R[t,t^{-1}]$, $R \langle t \rangle$, and $R(\!(t)\!)$ are stably free. By Theorem~\ref{thm:Hermite_equivalence}, if all of these modules are free over any one of these rings, the same is true over all the other rings.
 \end{proof}
 
 The following proposition shows that the implication $(i) \Rightarrow (iv)$ of Theorem~\ref{thm:Laurent} does not generalize to rings which are not local.
 
 \begin{prop}\label{prop:Swan_example}
 There exists a commutative regular ring $R$ such that the following hold:
 \begin{enumerate}
 \item[(i)] Every stably free module over $R[t]$ is extended from $R$ (in fact, every projective module is extended);
 \item[(ii)] There exists a stably free $R(\!(t)\!)$-module which is not extended from $R$.
 \end{enumerate}
 \end{prop}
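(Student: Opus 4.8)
The plan is to take for $R$ the coordinate ring of a real sphere of suitable dimension $n \geq 2$,
\[
 R = \mathbb{R}[x_0, \ldots, x_n] \slash \bigl(\textstyle\sum_{i=0}^n x_i^2 - 1\bigr) \smash{\rlap{.}}
\]
This ring is a smooth, hence regular, affine $\mathbb{R}$-algebra, and it is visibly not local, so it is a natural candidate for separating the behaviour of $R[t]$ from that of $R(\!(t)\!)$. It already carries interesting projective modules: the unimodular row $(x_0, \ldots, x_n)$ has as kernel the ``tangent bundle'' module $P_0$, with $P_0 \oplus R \cong R^{n+1}$, and $P_0$ is not free for appropriate $n$ (for instance $n=2$).

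For part $(i)$ I would invoke Lindel's theorem, i.e.\ the solution of the Bass--Quillen conjecture for regular rings essentially of finite type over a field: every finitely generated projective $R[t]$-module (indeed every finitely generated projective module over any polynomial ring $R[t_1, \ldots, t_m]$) is extended from $R$. Since stably free modules are in particular projective, this yields $(i)$ together with the parenthetical strengthening at one stroke. Note that $(i)$ does not assert that these modules are free — the module $P_0$ is already a non-free example over $R$, and hence over $R[t]$ — it only says that adjoining a polynomial variable produces nothing new, which is why \emph{extendedness} is the correct notion to track here.

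Part $(ii)$ is the substantive half, and the first thing to record is that the obstruction cannot be seen in $K$-theory. Indeed $R \llbracket t \rrbracket$ is $t$-adically complete (with $t$ in its Jacobson radical), so finitely generated projective $R \llbracket t \rrbracket$-modules correspond to finitely generated projective $R$-modules and $K_0(R) \cong K_0(R \llbracket t \rrbracket)$. Combining this with the surjection $K_0(R \llbracket t \rrbracket) \twoheadrightarrow K_0\bigl(R(\!(t)\!)\bigr)$ coming from the $G_0$-localization sequence for the nonzerodivisor $t$ — exactly as in the proof of Theorem~\ref{thm:regular_local_equivalence} — every finitely generated projective $R(\!(t)\!)$-module has $K_0$-class in the image of $K_0(R)$, and a stably free $R(\!(t)\!)$-module has the $K_0$-class of a free module. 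The failure in $(ii)$ must therefore be an unstable, cancellation-type phenomenon, invisible to $K_0$.

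To produce the example I would follow Swan and twist $P_0$ by the Laurent parameter: one writes down a unimodular row over $R(\!(t)\!)$ whose entries genuinely involve $t$ and $t^{-1}$ and whose $\mathrm{GL}_{n+1}\bigl(R(\!(t)\!)\bigr)$-orbit contains no row defined over $R$, so that its kernel $Q$ is stably free but not extended. The main obstacle is precisely the verification that $Q$ is not extended: because the relevant invariant is unstable, it cannot be detected by $K_0$, and one must instead exhibit a finer invariant that vanishes on every module of the form $R(\!(t)\!) \otimes_R Q_0$ but not on $Q$. Conceptually this invariant measures the automorphism (the ``loop'' around $t=0$) obstructing descent of $Q$ to $R$, which is trivial for extended modules; the delicate point, and the heart of Swan's example, is to check that the explicit $t$-twist realizes a nontrivial such datum.
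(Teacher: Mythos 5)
Your part (i) is fine and matches the paper: both invoke Lindel's theorem for a regular affine algebra over a field (the paper takes $R=\mathbb{C}[x_0,\ldots,x_4]\slash \sum_{i=0}^4 x_i^2-1$, which is exactly the ring for which Swan's theorem is stated). The problem is part (ii), where there is a genuine gap: you propose to ``write down a unimodular row over $R(\!(t)\!)$ whose entries genuinely involve $t$ and $t^{-1}$ and whose orbit contains no row defined over $R$,'' and then you concede that ``the delicate point, and the heart of Swan's example, is to check that the explicit $t$-twist realizes a nontrivial such datum.'' That delicate point is the entire content of (ii), and you never carry it out: no row is written down, the ``finer invariant'' that is supposed to detect non-extendedness is never defined, and nothing is computed. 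As your own $K_0$ digression correctly observes, the obstruction is invisible to $K_0$, so without an actual construction and an actual unstable invariant the sketch proves nothing. (A secondary issue: for your real sphere $\mathbb{R}[x_0,\ldots,x_n]\slash(\sum_i x_i^2-1)$ you would additionally have to establish a Swan-type non-extension statement for that ring, whereas \cite[Theorem~2.1]{SWAN} is stated for the complex sphere.)

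The paper closes exactly this gap without constructing anything new and without any unstable invariant, by a formal descent argument you could adopt. Cite \cite[Theorem~2.1]{SWAN} as a black box: there is a stably free $R[t,t^{-1}]$-module $P$ which is \emph{not} of the form $P'_t$ for any projective $R[t]$-module $P'$. Set $\widetilde{P} \defl R(\!(t)\!)\ten{R[t,t^{-1}]} P$; it is stably free. If $\widetilde{P}$ were extended from $R$, say $\widetilde{P}\cong R(\!(t)\!)\otimes_R P_0$, then $Q \defl R\llbracket t\rrbracket \otimes_R P_0$ is a projective $R\llbracket t\rrbracket$-module with $Q_t\cong \widetilde{P}$. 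Since $R[t]\to R\llbracket t\rrbracket$ is an analytic isomorphism along $t$, the square with corners $R[t]$, $R[t,t^{-1}]$, $R\llbracket t\rrbracket$, $R(\!(t)\!)$ is a patching diagram, so $P$ and $Q$ glue to a projective $R[t]$-module $P'$ with $P'_t\cong P$ --- contradicting Swan's theorem. This patching step is precisely what replaces the missing invariant in your plan: non-extendedness over $R(\!(t)\!)$ is \emph{inherited} from Swan's non-extendedness over $R[t,t^{-1}]$, rather than verified from scratch.
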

 
 \begin{proof}
 Let $R =\mathbb{C}[x_0, \ldots, x_4] \slash \sum_{i=0}^4 x_i^{2}-1$. Then $R$ is a regular ring of finite type over a field, so $(i)$ holds by Lindel's Theorem (see \cite{LINDEL}).
 
 From \cite[Theorem~2.1]{SWAN} we know that there exists a stably free $R[t,t^{-1}]$-module $P$ which does not extend to $R[t]$. We claim that the module 
 \[
 \widetilde{P} \defl R(\!(t)\!) \ten{R[t,t^{-1}]} P
 \]
  is not extended from $R$. We prove this claim by contradiction, so we assume that $\widetilde{P}$ is extended from $R$. The sequence of ring homomorphisms
 \[
 R \rightarrow R\llbracket t \rrbracket \rightarrow R\llbracket t \rrbracket_t=R(\!(t)\!)
 \]
 shows that there is in particular a projective $R \llbracket t \rrbracket$-module $Q$ such that $\widetilde{P} \cong Q_t$. From the patching diagram
 \[
 \xymatrix{R(\!(t)\!) & R \llbracket t \rrbracket \ar[l] \\ R[t,t^{-1}] \ar[u] & R[t] \ar[u] \ar[l] }
 \]
 it follows that there exists a projective $R[t]$-module $P^{\prime}$ such that $P^{\prime}_t \cong P$ and $R\llbracket t \rrbracket \ten{R[t]} P^{\prime} \cong Q$, contradicting the assertion of \cite[Theorem~2.1]{SWAN} that no such module $P^{\prime}$ can exist. Thus $\widetilde{P}$ is not extended from $R$, as claimed.
 \end{proof}
 
 A question that remains open is whether or not the implication $(i) \Rightarrow (iv)$ is also true in the case of several variables.
 
 \begin{question} Let $r,n \geq 2$ be natural numbers and let $R$ be a commutative local ring. If each unimodular row of length $r+1$ over $R[t_1, \ldots t_n]$ is completable, does it follow that each unimodular row of length $r+1$ over $R(\!(t_1, \ldots, t_n)\!)$ is completable?
 \end{question}
\section{Universal unimodular rows}\label{section:universal}

Throughout this section we fix a commutative ring $R$. For each $r \geq 2$, there exists a countable family of commutative $R$-algebras $A^R_{r,k}$, $k \in \mathbb{N}$ which have a ``universal'' unimodular row of length $r+1$ over the formal Laurent series ring $A^R_{r,k} (\!(t)\!)$. These rings have the property that for any unimodular row $a=(a_0,\ldots, a_n)$ over $A(\!(t)\!)$, $A$ an arbitrary commutative $R$-algebra, there exists a $k \in \mathbb{N}$ an an $R$-linear homomorphism $A_{r,k} \rightarrow A$ such that the image of the universal unimodular row is in the same $\mathrm{GL}_{r+1} \bigl( A(\!(t)\!) \bigr)$-orbit as $a$.

 We will see that the algebras $A^R_{r,k}$ are localizations of filtered colimits of $R$-algebras $\colim_n B^R_{r,k,n}$. The main result of this section is that all unimodular rows of length $r+1$ over all local $R$-algebras $A$ are completable if and only if suitable unimodular rows of length $r+1$ over $B^R_{r,k,n}$ are completable (see Theorem~\ref{thm:test_set}). It follows in particular that the commutative rings $B^{\mathbb{Z}}_{r,k,n}$ form a test set for $(\mathrm{H})$ (see Definition~\ref{dfn:test_set_for_H}). In \S \ref{section:reduction}, we will then show that the algebras $B^{\mathbb{Z}}_{r,k,n}$ yield a test set with the properties asserted in Theorem~\ref{thm:test_set_of_complete_intersections}.
 
 The proof of Theorem~\ref{thm:test_set} proceeds in several steps.
 
 \begin{enumerate}
 \item[(1)] By Theorem~\ref{thm:Hermite_equivalence}, all polynomial unimodular rows over $A$ are completable if all unimodular rows over $A(\!(t)\!)$ are completable.
 \item[(2)] By universality, it suffices to check that all unimodular rows over the formal Laurent series rings over the local rings of $A_{r,k}^R$ are completable.
 \item[(3)] By Theorem~\ref{thm:Hermite_equivalence}, this is equivalent to the assertion that all \emph{polynomial} unimodular rows over these local rings are completable.
 \item[(4)] A converse of the Quillen Patching Theorem (essentially due to Roitman) shows that suitable polynomial unimodular rows over a localization $B_S$ are completable if the same holds for $B$. Since the local rings of $A^R_{k,n}$ are localizations of $\colim_n B^R_{r,k,n}$, it suffices to check that all (suitable) polynomial unimodular rows over this filtered colimit are completable.
 \item[(5)] Since a polynomial unimodular row involves only finitely many coefficients and equations between them, all (suitable) polynomial unimodular rows over a filtered colimit are completable if this is true for all the constituents of the colimit. 
 \end{enumerate}

 Most of these steps are quite straightforward once the required $R$-algebras are defined. In Step (4), the converse of the Quillen Patching Theorem requires some additional work (see Proposition~\ref{prop:unimodular_row_and_localization}).
 
 \begin{dfn}\label{dfn:universal_rings}
  Let $r \geq 2$ be a natural number and let $S_r$ be the countable set 
  \[
  S_r \defl \bigl\{x_{i,j} \big\vert (i,j) \in \{0, \ldots, r\} \times \mathbb{N} \bigr\} \amalg \bigr\{y_{i,j} \big\vert (i,j) \in \{0, \ldots, r\} \times \mathbb{N} \bigl\} \smash{\rlap{.}}
  \]
 For $n \in \mathbb{N}$, let $p_n$ be the polynomial
 \[
 p_n \defl \sum_{i=0}^r \sum_{j=0}^n  x_{i,j} \cdot y_{i,n-j}
 \]
 in $R[S_r]$. Let $J_{k} \subseteq R[S_r]$ be the ideal generated by the set $\bigl\{ p_n \big\vert n \in \mathbb{N} \setminus \{k\} \bigr\}$. We write $B^R_{r,k} \defl R[S_r] \slash J_k$ for the quotient ring. The ring $A^R_{r,k}$ is defined as the localization
 \[
 A^R_{r,k} \defl (B^R_{r,k})_{\overline{p_k}}
 \]
 of $B^R_{r,k}$ at the image of the polynomial $p_k$ in the quotient ring.
 
 For $i=1,\ldots, r$ Let $x_i \in A_{r,k} \llbracket t \rrbracket$ be the formal power series $x_i \defl \sum_{j=0}^{\infty} \overline{x_{i,j}} t^j $ and let $y_i \in A_{r,k} \llbracket t \rrbracket$ be the formal power series $y_i \defl \sum_{j=0}^{\infty} \overline{y_{i,j}} t^j $
 \end{dfn} 
 
 \begin{lemma}
 Let $r \geq 2$ and $k \in \mathbb{N}$ be natural numbers. With the notation of Definition~\ref{dfn:universal_rings}, the two rows
 \[
 (x_0, \ldots, x_r) \quad \text{and} \quad (y_0, \ldots, y_r)
 \]
 are unimodular rows in $A^R_{r,k} (\!(t)\!)$.
 \end{lemma}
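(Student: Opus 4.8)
The plan is to compute the product $\sum_{i=0}^r x_i y_i$ inside the power series ring $A^R_{r,k}\llbracket t \rrbracket$ and to show that it is a unit in the Laurent series ring $A^R_{r,k}(\!(t)\!)$. Once this is done, both claims follow at once: if $\sum_{i=0}^r x_i y_i = u$ is a unit, then $\sum_{i=0}^r x_i (u^{-1} y_i) = 1$ exhibits $(u^{-1}y_0, \ldots, u^{-1} y_r)$ as a witness that $(x_0, \ldots, x_r)$ is unimodular, and symmetrically $(u^{-1} x_0, \ldots, u^{-1} x_r)$ witnesses the unimodularity of $(y_0, \ldots, y_r)$.

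First I would expand the Cauchy product. Since $x_i = \sum_{j \geq 0} \overline{x_{i,j}}\, t^j$ and $y_i = \sum_{m \geq 0} \overline{y_{i,m}}\, t^m$, the coefficient of $t^n$ in $x_i y_i$ is $\sum_{j=0}^n \overline{x_{i,j}}\,\overline{y_{i,n-j}}$. Summing over $i$ and comparing with Definition~\ref{dfn:universal_rings}, the coefficient of $t^n$ in $\sum_{i=0}^r x_i y_i$ is exactly the image $\overline{p_n}$ of $p_n$, so that
\[
\sum_{i=0}^r x_i y_i = \sum_{n=0}^{\infty} \overline{p_n}\, t^n
\]
in $A^R_{r,k}\llbracket t \rrbracket$.

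Next I would invoke the defining relations. By construction $B^R_{r,k} = R[S_r]\slash J_k$ with $J_k$ generated by $\bigl\{ p_n : n \neq k \bigr\}$, so $\overline{p_n} = 0$ for every $n \neq k$, and this persists in the localization $A^R_{r,k}$. Hence the displayed series collapses to the single term $\overline{p_k}\, t^k$. Finally, $\overline{p_k}$ is a unit in $A^R_{r,k} = (B^R_{r,k})_{\overline{p_k}}$ by the very definition of the localization, and $t^k$ is invertible in $A^R_{r,k}(\!(t)\!)$; therefore $\overline{p_k}\, t^k$ is a unit, with inverse $\overline{p_k}^{-1} t^{-k}$. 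Taking $u = \overline{p_k}\, t^k$ in the first paragraph completes the argument. I do not expect any real obstacle here: the only point requiring attention is the bookkeeping in the Cauchy product, namely that its $t^n$-coefficient is precisely $\overline{p_n}$, together with the observation that the ideal $J_k$ is engineered so as to kill all of these coefficients except the $k$-th, which the localization then makes invertible.
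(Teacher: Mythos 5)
Your proof is correct and is essentially the paper's own argument: both compute the Cauchy product $\sum_{i=0}^r x_i y_i = \sum_{n\geq 0}\overline{p_n}\,t^n$, use the relations defining $J_k$ to collapse it to $\overline{p_k}\,t^k$, and observe this is a unit since $\overline{p_k}$ is inverted in $A^R_{r,k}$ and $t$ is invertible in $A^R_{r,k}(\!(t)\!)$. You merely spell out the bookkeeping (and the final step from ``the row generates the unit ideal'' to unimodularity) that the paper leaves implicit.
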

 
 \begin{proof}
 By construction, we have $\sum_{i=0}^r x_i y_i =\sum_{n=0}^\infty \overline{p_n} t^n=\overline{p_k} t^k$, which is a unit since $\overline{p_k}$ is invertible in $A^R_{r,k}$ and $t$ is invertible in $A^R_{r,k} (\!(t)\!)$.
 \end{proof}
 
 Given a homomorphism $\varphi \colon A \rightarrow B$ of commutative rings, we let
 \[
 \varphi(\!(t)\!) \colon A(\!(t)\!) \rightarrow B (\!(t)\!)
 \]
 be the ring homomorphism obtained applying $\varphi$ to each coefficient of the formal Laurent series. The following proposition shows that the unimodular rows $(x_0, \ldots, x_r)$ form a universal family.
 
 \begin{prop}\label{prop:universal_family}
 Let $A$ be a commutative $R$-algebra and let $(z_0, \ldots, z_r)$ be a unimodular row in $ \mathrm{Um}_{r+1} \bigl( A(\!(t)\!)  \bigr)$. Then there exists a $k \in \mathbb{N}$ and a homomorphism $\varphi \colon A^R_{r,k} \rightarrow A$ of $R$-algebras such that the unimodular rows
 \[
 (z_0,\ldots,z_r) \quad \text{and} \quad \bigl(\varphi(\!(t)\!)(x_0),\ldots, \varphi(\!(t)\!)(x_r) \bigr)
 \]
 lie in the same $\mathrm{GL}_{r+1}\bigl( A(\!(t)\!) \bigr)$-orbit of $\mathrm{Um}_{r+1} \bigl( A(\!(t)\!)  \bigr)$.
 \end{prop}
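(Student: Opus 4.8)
The plan is to construct $\varphi$ by reading off the coefficients of a suitably rescaled version of the given row, arranged so that the image of the universal row $(x_0, \ldots, x_r)$ under $\varphi(\!(t)\!)$ is \emph{literally} this rescaled row. Since the rescaling is effected by a scalar matrix in $\mathrm{GL}_{r+1}\bigl(A(\!(t)\!)\bigr)$, the rescaled row lies in the same orbit as $(z_0, \ldots, z_r)$, which is all that the proposition demands. The whole point of Definition~\ref{dfn:universal_rings} is that an $R$-algebra homomorphism out of $A^R_{r,k}$ is exactly the datum of a unimodular row over $A(\!(t)\!)$ in normalized form, so once the row is normalized the map $\varphi$ will be forced.

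First I would put the row into a convenient normal form, as in the proof of Lemma~\ref{lemma:reduction_step}. Since $(z_0, \ldots, z_r)$ is unimodular, I choose $(w_0, \ldots, w_r)$ with $\sum_{i=0}^r z_i w_i = 1$, and then pick $N \in \mathbb{N}$ large enough that $t^N z_i$ and $t^N w_i$ all lie in $A\llbracket t \rrbracket$ (possible since there are only finitely many of each). Replacing the row $(z_0, \ldots, z_r)$ by $(t^N z_0, \ldots, t^N z_r)$ is right multiplication by the invertible scalar matrix $t^N I_{r+1} \in \mathrm{GL}_{r+1}\bigl(A(\!(t)\!)\bigr)$, so it leaves the $\mathrm{GL}_{r+1}$-orbit unchanged, and replacing the witness by $(t^N w_0, \ldots, t^N w_r)$ keeps the products equal. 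After these replacements I may assume $z_i, w_i \in A\llbracket t \rrbracket$ and $\sum_{i=0}^r z_i w_i = t^k$ with $k = 2N$; write $z_i = \sum_{j \geq 0} a_{i,j}\, t^j$ and $w_i = \sum_{j \geq 0} b_{i,j}\, t^j$ with $a_{i,j}, b_{i,j} \in A$.

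The normalization is designed so that the Cauchy product is governed by exactly the polynomials $p_n$ of Definition~\ref{dfn:universal_rings}: the coefficient of $t^n$ in $\sum_i z_i w_i$ is $\sum_{i=0}^r \sum_{j=0}^n a_{i,j} b_{i,n-j}$, so the identity $\sum_i z_i w_i = t^k$ says precisely that this coefficient vanishes for $n \neq k$ and equals $1$ for $n = k$. I therefore define an $R$-algebra homomorphism $R[S_r] \to A$ by $x_{i,j} \mapsto a_{i,j}$ and $y_{i,j} \mapsto b_{i,j}$. It sends $p_n$ to the $n$-th coefficient above, hence kills every generator of $J_k$ and sends $p_k$ to $1$; thus it factors through $B^R_{r,k} = R[S_r] \slash J_k$, and since the image of $\overline{p_k}$ is the unit $1$ it extends uniquely along the localization to give $\varphi \colon A^R_{r,k} \to A$. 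By construction $\varphi(\overline{x_{i,j}}) = a_{i,j}$, so $\varphi(\!(t)\!)(x_i) = \sum_{j \geq 0} a_{i,j}\, t^j = z_i$. Hence $\bigl(\varphi(\!(t)\!)(x_0), \ldots, \varphi(\!(t)\!)(x_r)\bigr)$ equals the rescaled row, which lies in the same orbit as the original $(z_0, \ldots, z_r)$, as required.

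I expect no serious obstacle here; the argument is essentially bookkeeping once the rings are unwound. The two points that need care are choosing a single $N$ that simultaneously clears all the $z_i$ and all the $w_i$, and observing that multiplication by a power of $t$ really is an orbit move because $t^N I_{r+1}$ is invertible over $A(\!(t)\!)$. The only conceptual content — more a feature of the setup than a difficulty — is the identification of the coefficient equations of a normalized unimodular row with the defining relations $p_n$ of $A^R_{r,k}$, which is exactly what makes the universal homomorphism $\varphi$ materialize.
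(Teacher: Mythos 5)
Your proof is correct and takes essentially the same route as the paper's: both rescale the row and its witness by a power of $t$ so that all entries lie in $A\llbracket t \rrbracket$ and $\sum_{i=0}^r z_i w_i = t^k$, then read off the coefficients to get a homomorphism $R[S_r] \to A$ that kills $J_k$, inverts $\overline{p_k}$, and hence factors through $A^R_{r,k}$ with $\varphi(\!(t)\!)(x_i)$ equal to the rescaled row. You merely spell out two points the paper leaves implicit, namely that the rescaling is an orbit move given by the scalar matrix $t^N I_{r+1} \in \mathrm{GL}_{r+1}\bigl(A(\!(t)\!)\bigr)$ and that one may take $k = 2N$.
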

 
 \begin{proof}
 Let $w_i \in A(\!(t)\!)$ be elements with $\sum_{i=0}^r z_i w_i =1$. By multiplying the $z_i$ and the $w_i$ by a sufficiently high power of $t$, we can assume that $z_i \in A \llbracket t \rrbracket$, $w_i \in A \llbracket t \rrbracket$ for all $i=0, \ldots, r$ and that there exists a natural number $k \in \mathbb{N}$ such that the equation $\sum_{i=0}^r z_i w_i=t^k$ holds.
 
 If we write $z_{i,j}$ for the $j$-th coefficient of $z_i$ and $w_{i,j}$ for the $j$-th coefficient of $w_i$, we have $\sum_{i=0}^r \sum_{j=0}^n z_{i,j} w_{i,n-j}=0$ for all $n \neq k$ and $\sum_{i=0}^r \sum_{j=0}^k z_{i,j}  w_{i,k-j}=1$.
 We now use the notation introduced in Definition~\ref{dfn:universal_rings}. The first equation implies that the unique ring homomorphism $R[S_r] \rightarrow A$ sending $x_{i,j}$ to $z_{i,j}$ and $y_{i,j}$ to $w_{i,j}$ factors through the quotient ring $B^R_{r,k}$. The second equation shows that $\overline{p_k}$ is sent to a unit in $A$, which yields a unique extension $\varphi \colon A^R_{r,k} \rightarrow A$ to the localization $A^R_{r,k}$ of $B^R_{r,k}$. By construction, the ring homomorphism $\varphi(\!(t)\!)$ sends $x_i$ to $z_i$.
 \end{proof}
 
 There is an alternative construction of the rings $B^R_{r,k}$ as a filtered colimit of finitely presented commutative $R$-algebras.
 
 \begin{dfn}\label{dfn:building_blocks}
 Fix $r \geq 2$ and $k,n \in \mathbb{N}$. Let $S_{r,n}$ be the set
 \[
 S_{r,n} \defl \bigl\{x_{i,j} \big\vert (i,j) \in \{0,\ldots,r\}\times \{0,\ldots,n\}  \bigr\} \amalg \bigl\{y_{i,j} \big\vert (i,j) \in \{0,\ldots,r\}\times \{0,\ldots,n\}  \bigr\} \smash{\rlap{.}}
 \]
 For $0 \leq \ell \leq n$, let $p_{\ell}$ be the polynomial
 \[
 p_{\ell} \defl \sum_{i=0}^r \sum_{j=0}^{\ell}  x_{i,j} \cdot y_{i,\ell-j}
 \]
 in $R[S_{r,n}]$. Let $J_{k} \subseteq R[S_{r,n}]$ be the ideal generated by the set $\{p_0, \ldots, \widehat{p_k}, \ldots, p_n\}$. We let
 \[
 B^R_{r,k,n} \defl R[S_{r,n}] \slash J_k
 \]
 be the resulting quotient ring.
 \end{dfn}
 
 \begin{lemma}\label{lemma:filtered_colimit}
  There are canonical ring homomorphisms
  \[
  B^R_{r,k,n} \rightarrow B^R_{r,k,n+1} \quad \text{and} \quad B^R_{r,k,n} \rightarrow B_{r,k}^R
  \]
  which exhibit the ring $B^R_{r,k}$ of Definition~\ref{dfn:universal_rings} as filtered colimit of the chain
  \[
  \xymatrix{B^R_{r,k,0} \ar[r] & B^R_{r,k,1} \ar[r] & \ldots \ar[r] & B^R_{r,k,n} \ar[r] & \ldots}
  \]
  of commutative $R$-algebras.
 \end{lemma}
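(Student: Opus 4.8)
The plan is to build the two families of maps from the evident nested inclusions of polynomial rings and then verify the universal property of the filtered colimit directly on $R$-algebra homomorphisms. First I would record that the variable sets are nested: $S_{r,n} \subseteq S_{r,n+1}$ and $S_{r,n} \subseteq S_r$, with $\bigcup_{n} S_{r,n} = S_r$. These induce inclusions of polynomial $R$-algebras $R[S_{r,n}] \hookrightarrow R[S_{r,n+1}]$ and $R[S_{r,n}] \hookrightarrow R[S_r]$ whose filtered colimit is $R[S_r]$. The compatibility to observe is that the polynomial $p_\ell$ of Definition~\ref{dfn:building_blocks} involves only variables $x_{i,j}, y_{i,j}$ with $j \leq \ell \leq n$, so it is sent to the polynomial also denoted $p_\ell$ in $R[S_{r,n+1}]$ and in $R[S_r]$; the overloaded notation is thus consistent. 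Consequently each generator $p_\ell$ (with $0 \le \ell \le n$, $\ell \neq k$) of the ideal $J_k \subseteq R[S_{r,n}]$ is carried to a generator of $J_k \subseteq R[S_{r,n+1}]$, respectively of $J_k \subseteq R[S_r]$. Hence both inclusions descend to ring homomorphisms $B^R_{r,k,n} \to B^R_{r,k,n+1}$ and $B^R_{r,k,n} \to B^R_{r,k}$ on the quotients, and these are manifestly compatible, so they assemble into a cocone under the chain.

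Next I would check that this cocone is a colimit by testing the universal property against an arbitrary commutative $R$-algebra $C$. An $R$-algebra homomorphism $B^R_{r,k} \to C$ is the same datum as a set map $S_r \to C$ sending each $p_\ell$ with $\ell \in \mathbb{N}\setminus\{k\}$ to $0$. A cocone under the chain with vertex $C$ is a compatible family of $R$-algebra maps $B^R_{r,k,n} \to C$; by the nesting $\bigcup_n S_{r,n}=S_r$, such a compatible family amounts to a single set map $S_r \to C$ which, for every $n$, kills every $p_\ell$ with $0 \le \ell \le n$ and $\ell \neq k$. The crucial bookkeeping is that quantifying ``for all $n$ and all $0 \le \ell \le n$ with $\ell \neq k$'' is identical to quantifying ``for all $\ell \in \mathbb{N}\setminus\{k\}$'', so the two collections of data coincide, naturally in $C$. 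This exhibits $B^R_{r,k}$, together with the maps constructed above, as the filtered colimit of the chain.

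I expect the only genuinely delicate point to be this matching of relations: one must be certain that no relation $p_\ell$ of the infinite presentation is lost at the finite stages and that, conversely, no spurious relation is imposed. This hinges entirely on the finiteness observation that each $p_\ell$ is a finite polynomial in variables of second index at most $\ell$, so that the relation $p_\ell = 0$ already appears at stage $n = \ell$, while $p_k$ is never imposed. An equivalent route, which I would mention as an alternative, uses exactness of filtered colimits of $R$-modules: applying $\colim_n$ to the short exact sequences $0 \to J_k \to R[S_{r,n}] \to B^R_{r,k,n} \to 0$ yields $\colim_n B^R_{r,k,n} \cong R[S_r]/\bigl(\colim_n J_k\bigr)$, and the same finiteness observation identifies $\colim_n J_k$ with the ideal $J_k \subseteq R[S_r]$. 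Since the forgetful functor from commutative $R$-algebras to $R$-modules creates filtered colimits, this module-level computation indeed computes the colimit in $R$-algebras.
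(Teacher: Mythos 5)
Your proposal is correct and follows essentially the same route as the paper: the paper likewise defines the maps by sending the generators $\overline{x_{i,j}}$, $\overline{y_{i,j}}$ to the elements of the same name and then observes that $B^R_{r,k}$ and $\colim_n B^R_{r,k,n}$ have the same universal property, which is exactly the quantifier-matching argument you spell out in detail. Your version (and the alternative via exactness of filtered colimits of modules) simply makes explicit what the paper's one-line proof leaves to the reader.
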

 
 \begin{proof}
 The homomorphisms are obtained by sending the elements $\overline{x_{i,j}}$ and $\overline{y_{i,j}}$ in the domain to the elements with the same name in the codomain. Since $B^R_{r,k}$ and $\mathrm{colim}_{n \in \mathbb{N}} B^R_{r,k,n}$ have the same universal property, the induced ring homomorphism $\mathrm{colim}_{n \in \mathbb{N}} B^R_{r,k,n} \rightarrow B^R_{r,k}$ is an isomorphism.
 \end{proof}
 
 For an ideal $I \subseteq A$ we write $\mathrm{Um}_{r+1}(A,I)$ for the set of unimodular rows $(a_0, \ldots, a_r) \in \mathrm{Um}_{r+1}(A)$ such that
 \[
 (a_0, \ldots, a_n) \equiv (1,0,\ldots, 0) \pmod{I}
 \]
 holds.
 
 \begin{lemma}\label{lemma:special_complement_row}
 If $(a_0, \ldots, a_r) \in \mathrm{Um}_{r+1}(A,I)$, then there exists a unimodular row $(b_0, \ldots, b_r) \in \mathrm{Um}_{r+1}(A,I)$ such that $\sum_{i=0}^r a_i b_i=1$.
 \end{lemma}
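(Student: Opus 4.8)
The plan is to start from an \emph{arbitrary} complement of $(a_0,\dots,a_r)$ and then correct it so that it, too, becomes congruent to $(1,0,\dots,0)$ modulo $I$. First I would invoke unimodularity of $(a_0,\dots,a_r)$ to choose elements $c_0,\dots,c_r \in A$ with $\sum_{i=0}^r a_i c_i = 1$. There is no reason for the row $(c_0,\dots,c_r)$ to lie in $\mathrm{Um}_{r+1}(A,I)$, so all the work lies in repairing this defect while preserving the complementarity equation.

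The key observation is that the hypothesis $(a_0,\dots,a_r)\equiv(1,0,\dots,0)\pmod I$ says precisely that $1-a_0\in I$ (and $a_i \in I$ for $i \geq 1$). Thus $1-a_0$ is an element of $I$ that I can use as a scaling factor to force the correction terms into $I$ without destroying the equation $\sum a_i b_i = 1$. Concretely, I would set
\[
 b_i \defl \delta_{i0} + (1-a_0)\,c_i \qquad (i=0,\dots,r),
\]
where $\delta_{i0}$ denotes the Kronecker delta.

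It then remains to verify the two required properties by direct computation. For the complementarity equation, the telescoping identity gives
\[
 \sum_{i=0}^r a_i b_i = a_0 + (1-a_0)\sum_{i=0}^r a_i c_i = a_0 + (1-a_0)\cdot 1 = 1 \smash{\rlap{,}}
\]
so $(b_0,\dots,b_r)$ is a genuine complement of $(a_0,\dots,a_r)$; in particular it is itself unimodular, with $(a_0,\dots,a_r)$ serving as its complement. For the congruence, each correction term $(1-a_0)c_i$ lies in $I$ because $1-a_0\in I$, whence $b_0 = 1 + (1-a_0)c_0 \equiv 1$ and $b_i = (1-a_0)c_i \equiv 0 \pmod I$ for $i \geq 1$; that is, $(b_0,\dots,b_r)\in \mathrm{Um}_{r+1}(A,I)$, as required.

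I do not anticipate any real obstacle: the whole argument reduces to the single algebraic identity above, and the only point requiring thought is the choice of the scaling factor $1-a_0$, which simultaneously lands the corrections in $I$ and preserves $\sum a_i b_i = 1$ thanks to the cancellation $a_0 + (1-a_0) = 1$.
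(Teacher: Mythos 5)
Your proof is correct, but it takes a genuinely different route from the paper's. The paper uses the squaring trick: since $(a_0,\ldots,a_r)$ is unimodular, so is $(a_0^2,\ldots,a_r^2)$, so one can write $\sum_{i=0}^r a_i^2 b_i' = 1$ and set $b_i = a_i b_i'$; the congruences $b_i \equiv 0 \pmod I$ for $i \geq 1$ are then \emph{inherited} from those of the $a_i$, and $b_0 \equiv 1 \pmod I$ is \emph{deduced} afterwards from $1 = \sum_i a_i b_i \equiv a_0 b_0 \pmod I$ together with $a_0 \equiv 1 \pmod I$. You instead start from an arbitrary complement $(c_0,\ldots,c_r)$ and correct it affinely, setting $b_i = \delta_{i0} + (1-a_0)c_i$, using $1-a_0 \in I$ as the scaling factor; in your construction the congruences hold by inspection and the only computation is the cancellation $\sum_i a_i b_i = a_0 + (1-a_0) = 1$. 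Both arguments are elementary, one paragraph long, and equally general (any ring $A$, any ideal $I$). Yours avoids the small auxiliary observation that the square of a unimodular row is unimodular and makes membership in $\mathrm{Um}_{r+1}(A,I)$ manifest rather than derived; the paper's construction has the incidental extra feature that each $b_i$ is a multiple of $a_i$, which is not used anywhere downstream, so nothing is lost by your approach.
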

 
 \begin{proof}
 Since $(a_0, \ldots, a_r)$ is unimodular, so is $(a_0^2, \ldots, a_r^2)$. Let $b_i^{\prime}$ be elements in $A$ such that $\sum_{i=0}^r a_i^{2} b_i^{\prime}=1$ holds. If we set $b_i \defl a_i b_i^{\prime}$, then the equation $\sum_{i=0}^r a_i b_i=1$ holds by definition. Moreover, since $a_i \equiv 0 \pmod{I}$ for all $i \geq 1$, we have $b_i \equiv 0 \pmod{I}$ for all $i \geq 1$.
 
 The equations
 \[
 1=\sum_{i=0}^r a_i b_i \equiv a_0 b_0 \pmod{I}
 \]
 and $a_0 \equiv 1 \pmod{I}$ show that $b_0 \equiv 1 \pmod{I}$ holds. Finally, $(b_0, \ldots, b_r)$ is a unimodular row since $\sum_{i=0}^r a_i b_i=1$.
 \end{proof}
 
 The following proposition is a variation of a result of Roitman (see \cite[Proposition~2]{ROITMAN}).
 
 \begin{prop}\label{prop:unimodular_row_and_localization}
 Let $A$ be a commutative ring and let $S \subseteq A$ be a multiplicative subset. Let $r \geq 2$. If each unimodular row  $\bigl(a_0(t), \ldots, a_r(t) \bigr) \in \mathrm{Um}_{r+1}\bigl(A[t],(t) \bigr)$ is completable, then each unimodular row $\bigl(b_0(t), \ldots, b_r(t)\bigr)\mathrm{Um}_{r+1}\bigl(A_S[t],(t) \bigr)$ is completable.
 \end{prop}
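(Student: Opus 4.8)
The plan is to reduce to a localization at a single element and then transport the given row back to the honest polynomial ring $A[t]$ by the substitution $t \mapsto s^N t$ for $N \gg 0$, where the hypothesis applies verbatim. First I would reduce to the case $S = \{1, s, s^2, \ldots\}$ generated by one element $s$. A unimodular row $(b_0(t), \ldots, b_r(t)) \in \mathrm{Um}_{r+1}(A_S[t],(t))$ together with a witness $\sum_i b_i c_i = 1$ of its unimodularity involves only finitely many elements of $A_S$, so there is a single $s \in S$ such that the row descends to a unimodular row in $\mathrm{Um}_{r+1}(A_s[t],(t))$. Since a completing matrix pushes forward along $A_s[t] \to A_S[t]$, it suffices to treat $A_s$ in place of $A_S$.

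Next, working over $A_s[t]$, I would invoke Lemma~\ref{lemma:special_complement_row} to choose a complementary row $(c_0, \ldots, c_r) \in \mathrm{Um}_{r+1}(A_s[t],(t))$ with $\sum_i b_i c_i = 1$; the point of using the lemma is to arrange that $c$ also reduces to $(1,0,\ldots,0)$ modulo $(t)$. Writing every coefficient of each $b_i$ and $c_i$ as a fraction whose denominator is a power of $s$ bounded by $s^m$, one checks that after applying $t \mapsto s^N t$ every \emph{non-constant} coefficient of $b_i(s^N t)$ and $c_i(s^N t)$ becomes divisible by $s^{N-m}$, while the constant terms are just the entries of $(1,0,\ldots,0)$ and hence lie in $A$. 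Thus $b(s^N t)$ and $c(s^N t)$ lift to rows $\hat b, \hat c \in \mathrm{Um}_{r+1}(A[t],(t))$ whose images in $A_s[t]$ are $b(s^N t)$ and $c(s^N t)$.

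The crux, and the step I expect to be the main obstacle, is to show that $\hat b$ is genuinely unimodular over $A[t]$: since $A \to A_s$ need not be injective (i.e.\ $s$ may be a zero divisor), this does not come for free. Set $\eta \defl \sum_i \hat b_i \hat c_i - 1$. Its image in $A_s[t]$ vanishes, so $\eta$ lies in the kernel of $A[t] \to A_s[t]$ and is annihilated by some power of $s$; the delicate point is that, after factoring out of each coefficient of $\eta$ the powers of $s$ introduced by the substitution, the annihilating exponent is bounded by an $L$ \emph{independent of $N$}. On the other hand, because both $\hat b$ and $\hat c$ reduce to $(1,0,\ldots,0)$ modulo $(t)$, a direct expansion shows $\eta$ is divisible by $s^{N-m}$, say $\eta = s^{N-m}\eta'$. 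Multiplying $\sum_i \hat b_i \hat c_i = 1 + \eta$ by $s^L$ yields $s^{L} = \sum_i \hat b_i (s^{L} \hat c_i)$; then for $N \geq m + L$ one rewrites $\eta = s^{N-m-L}\eta' \cdot s^L = \sum_i \hat b_i\,(s^{N-m-L}\eta'\, s^{L}\hat c_i)$, exhibiting $\eta \in (\hat b_0, \ldots, \hat b_r)$. Hence $1 = \sum_i \hat b_i \hat c_i - \eta$ lies in this ideal, so $\hat b \in \mathrm{Um}_{r+1}(A[t],(t))$.

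Finally I would apply the hypothesis to complete $\hat b$ over $A[t]$. Pushing the completing matrix forward to $A_s[t]$ completes $b(s^N t)$, and applying the ring automorphism $t \mapsto s^{-N} t$ of $A_s[t]$ (legitimate as $s$ is a unit in $A_s$) completes $b(t)$ itself. Pushing forward once more along $A_s[t] \to A_S[t]$ then completes the original row, which finishes the proof.
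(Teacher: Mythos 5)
Your proof is correct and follows essentially the same route as the paper's: clear denominators to reduce to a single $s \in S$, use Lemma~\ref{lemma:special_complement_row} to obtain a complementary row congruent to $(1,0,\ldots,0)$ modulo $(t)$, substitute $t \mapsto s^N t$ to land in $\mathrm{Um}_{r+1}\bigl(A[t],(t)\bigr)$, complete by hypothesis, and undo the substitution with the automorphism $t \mapsto s^{-N}t$ over the localization. Where you go beyond the paper is precisely the step you call the crux: the paper's proof simply writes $b_i(s^k t) \in A[t]$ and declares the resulting row unimodular over $A[t]$, which tacitly identifies $A[t]$ with its image in $A_S[t]$; when $A \to A_S$ is not injective this identification is exactly the point at issue, and your kernel analysis supplies the missing justification, so your treatment is genuinely more careful. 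The one claim you assert without proof --- that the annihilating exponent $L$ can be chosen independent of $N$ --- is true, and the verification is short: fix representations $x_{i,j}/s^{\ell_{i,j}}$ with $\ell_{i,j} \leq m$ for the coefficients of $b$ and $c$ once and for all, and build the lifts $\hat b$, $\hat c$ from these data; then for $N \geq 2m$ and $j \geq 1$ the coefficient of $t^j$ in $\eta$ equals $s^{Nj-2m}\gamma_j$, where $\gamma_j \in A$ does not depend on $N$, and $\gamma_j$ is annihilated by some $s^{L_j}$ because its image in $A_s$ vanishes ($s$ being invertible there); since $\deg \eta$ is bounded independently of $N$, the exponent $L \defl 2m + \max_j L_j$ works uniformly. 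In fact this observation gives more than you need: once $N \geq 2m + \max_j L_j$, every coefficient $s^{Nj-2m}\gamma_j = s^{Nj-2m-L_j}\bigl(s^{L_j}\gamma_j\bigr)$ vanishes outright, so $\eta = 0$ and the ideal-membership manipulation at the end of your third paragraph can be dropped entirely.
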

 
 \begin{proof}
 By Lemma~\ref{lemma:special_complement_row}, we can choose $\bigl(c_0(t), \ldots, c_r(t) \bigr) \in \mathrm{Um}_{r+1}\bigl(A_S[t],(t) \bigr)$ with $\sum_{i=0}^r b_i(t)c_i(t)=1$. Since the polynomials $b_i(t)$ and $c_j(t)$ have a finite number of non-zero coefficients, there exists an element $s \in S$ such that all these coefficients can be written as fractions $x \slash s^{\ell}$ for some $x \in A$ and $\ell \in \mathbb{N}$. Thus for $k \in \mathbb{N}$ sufficiently large, we have $b_i(s^k t) \in A[t]$ and $c_j(s^k t) \in A[t]$. Since $\sum_{i=0}^r b_i(s^k t)c_i(s^k t)=1$, the polynomials $a_i(t) \defl b_i(s^k t)$ define a unimodular row in $\mathrm{Um}_{r+1}\bigl(A[t],(t)\bigr)$ (note that $a_i(0)=b_i(0)$, which shows that the row has the prescribed form modulo the ideal $(t)$.)
 
 By assumption, there exists a matrix $A=\bigl(a_{ij}(t)\bigr) \in \mathrm{GL}_{r+1}\bigl(A[t]\bigr)$ with first row $a_{1,i}(t)=a_{i-1}(t)$, $i=1, \ldots, r+1$. If we consider $A$ as an element of $\mathrm{GL}_{r+1}\bigl(A_S[t]\bigr)$ and apply the automorphism $\varphi \colon A_S[t] \rightarrow A_S[t]$ which fixes $A_S$ and sends $t$ to $s^{-k} t$, we get the desired matrix $B=\varphi(A) \in \mathrm{GL}_{r+1}\bigl(A_S[t]\bigr)$ with first row  equal to $\bigl(b_0(t), \ldots, b_r(t) \bigr)$.
 \end{proof}
 
 With this in hand, we are ready to prove the main result of this section. 
 
 \begin{thm}\label{thm:test_set}
 Let $R$ be a commutative ring, $r \geq 2$ a natural number. The following are equivalent:
 \begin{enumerate}
 \item[(i)] For all commutative local $R$-algebras $A$, all unimodular rows in $\mathrm{Um}_{r+1}(A[t])$ are completable;
 \item[(ii)] For all commutative $R$-algebras $A$, all unimodular rows in $\mathrm{Um}_{r+1}\bigl(A[t],(t)\bigr)$ are completable;
 \item[(iii)] For all $k, n \in \mathbb{N}$, all unimodular rows in $\mathrm{Um}_{r+1}\bigl(B_{r,k,n}^R[t],(t)\bigr)$ are completable, where $B_{r,k,n}^R$ denotes the $R$-algebra of Definition~\ref{dfn:building_blocks}.
 \end{enumerate}
 \end{thm}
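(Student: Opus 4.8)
The plan is to prove the three statements equivalent by establishing the cycle $(i) \Rightarrow (ii) \Rightarrow (iii) \Rightarrow (i)$. The implication $(ii) \Rightarrow (iii)$ is immediate, since each $B_{r,k,n}^R$ is a particular commutative $R$-algebra and $(iii)$ is thus a special case of $(ii)$. The two substantial arrows are $(i) \Rightarrow (ii)$, for which I would invoke Quillen's Patching Theorem, and $(iii) \Rightarrow (i)$, which is the heart of the matter and which assembles the five-step reduction sketched above.

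For $(i) \Rightarrow (ii)$ the key observation I would isolate first is that a row $a \in \mathrm{Um}_{r+1}(A[t],(t))$ is completable if and only if its kernel $P$ is extended from $A$: indeed $P/tP$ is the kernel of $(1,0,\ldots,0)$ over $A$, hence free of rank $r$, so if $P$ is extended then $P \cong A[t]\otimes_A (P/tP)$ is free, while conversely free modules are extended. Given this, I would apply Quillen's Patching Theorem: $P$ is extended from $A$ as soon as $P_{\mathfrak{m}}$ is extended from $A_{\mathfrak{m}}$ for every maximal ideal $\mathfrak{m}$. Since $A_{\mathfrak{m}}$ is a local $R$-algebra, hypothesis $(i)$ makes the localized row completable, so $P_{\mathfrak{m}}$ is free and a fortiori extended; patching then yields that $P$ is extended, hence free.

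For $(iii) \Rightarrow (i)$ I would run the reduction backwards through the intermediate assertion $(\ast)$ that all length-$(r+1)$ unimodular rows over $A(\!(t)\!)$ are completable for every local $R$-algebra $A$. By Theorem~\ref{thm:Laurent} applied ring by ring, $(\ast)$ is equivalent to $(i)$, so it suffices to deduce $(\ast)$ from $(iii)$. Starting from $(iii)$, Lemma~\ref{lemma:filtered_colimit} together with a finiteness argument (a polynomial row and a chosen complement involve only finitely many coefficients and relations) shows that all relative rows over $B_{r,k}^R[t] = \colim_n B_{r,k,n}^R[t]$ are completable. Proposition~\ref{prop:unimodular_row_and_localization} (the Roitman converse) then propagates completability of relative rows from $B_{r,k}^R$ to each of its localizations, in particular to the local rings $C$ of $A_{r,k}^R = (B_{r,k}^R)_{\overline{p_k}}$, since these are localizations of $B_{r,k}^R$. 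Over such a local $C$ every unimodular row over $C[t]$ is $\mathrm{GL}_{r+1}$-equivalent to a relative one (its constant term, being unimodular over the local ring $C$, is completable), so all rows over $C[t]$ are completable; Theorem~\ref{thm:Laurent} converts this into completability of all rows over $C(\!(t)\!)$. Finally, Proposition~\ref{prop:universal_family} delivers $(\ast)$: given a local $R$-algebra $A$ and a row $z$ over $A(\!(t)\!)$, I would factor the classifying map $A_{r,k}^R \to A$ through the local ring $C = (A_{r,k}^R)_{\mathfrak{p}}$, complete the image of the universal row over $C(\!(t)\!)$, and push the completing matrix forward to $A(\!(t)\!)$, using that completability is preserved by ring homomorphisms and is invariant under the $\mathrm{GL}_{r+1}$-orbit.

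The main obstacle I anticipate is bookkeeping the relative condition $\mathrm{Um}_{r+1}(-,(t))$ consistently along the chain, since Theorem~\ref{thm:Laurent} and Proposition~\ref{prop:universal_family} are naturally phrased in terms of \emph{all} rows over Laurent-series rings, whereas Proposition~\ref{prop:unimodular_row_and_localization} and the colimit step require \emph{relative} rows over polynomial rings. The hinge that makes the two match up is the elementary but essential remark that, over a local ring $C$, passage between ``all rows over $C[t]$'' and ``relative rows over $C[t]$'' is harmless because constant terms are completable over $C$; getting this translation precisely right at each junction, rather than any single hard estimate, is where the care is needed.
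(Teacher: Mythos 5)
Your proposal is correct and follows essentially the same route as the paper's proof: Quillen patching (reducing to local rings, where hypothesis $(i)$ gives freeness of $P_{\mathfrak{m}}$) for $(i) \Rightarrow (ii)$, and for $(iii) \Rightarrow (i)$ the identical chain through Theorem~\ref{thm:Laurent}, Proposition~\ref{prop:universal_family}, Proposition~\ref{prop:unimodular_row_and_localization}, and the filtered-colimit finiteness argument of Lemma~\ref{lemma:filtered_colimit}, merely traversed in the opposite direction and with a single application of the Roitman proposition to the composite localization $B^R_{r,k} \rightarrow (A^R_{r,k})_{\mathfrak{p}}$ where the paper applies it twice. The local-ring trick of moving a row into relative position, which the paper also records as the separate implication $(ii) \Rightarrow (i)$, appears in your argument embedded inside the $(iii) \Rightarrow (i)$ step, so nothing is missing.
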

 
 \begin{proof}
 The implication $(i) \Rightarrow (ii)$. Let $A$ be a commutative $R$-algebra and let $\alpha=(\alpha_0, \ldots, \alpha_r) \in \mathrm{Um}_{r+1}\bigl(A[t],(t)\bigr)$. Let $P$ be the kernel of $\alpha \colon A[t]^{r+1} \rightarrow A[t]$. Since $P \slash tP$ is free, it suffices to show that $P$ is extended from $A$. By Quillen's Patching Theorem (more precisely by its corollary \cite[Corollary~V.1.8]{LAM}), we only need to check that $P_{\mathfrak{m}}$ is a free $A_{\mathfrak{m}}[t]$-module for every maximal ideal $\mathfrak{m}$ of $A$. This follows from Assumption~$(i)$.
 
 The implication $(ii) \Rightarrow (i)$. For a local ring $A$, every $\mathrm{GL}_{r+1}\bigl(A[t]\bigr)$-orbit of $\mathrm{Um}_{r+1}\bigl(A[t]\bigr)$ contains an element of $\mathrm{Um}_{r+1}\bigl(A[t],(t)\bigr)$. Indeed, for any unimodular row $\alpha(t) = \bigl(\alpha_0(t), \ldots, \alpha_r(t)\bigr)$ over $A[t]$ we can find a matrix $M \in \mathrm{GL}_{r+1}(A)$ such that $\bigl(\alpha_0(0),\ldots, \alpha_r(0) \bigr)M=(1,0,\ldots, 0)$ (since $A$ is local). Then $\alpha^{\prime} \defl \alpha(t) M$ lies in $\mathrm{Um}_{r+1}\bigl(A[t],(t)\bigr)$, hence it is completable by Assumption~$(ii)$.
 
 The implication $(ii) \Rightarrow (iii)$ is clear, so it remains to show $(iii) \Rightarrow (i)$. Let $(A,\mathfrak{m})$ be an arbitrary commutative local $R$-algebra. We need to show that all unimodular rows in $\mathrm{Um}_{r+1}\bigl(A[t]\bigr)$ are completable. By the implication $(iv) \Rightarrow (i)$ of Theorem~\ref{thm:Laurent}, it suffices to prove the assertion that all unimodular rows $\alpha=(a_0, \ldots, a_r) \in \mathrm{Um}_{r+1}\bigl(A(\!(t)\!)\bigr)$ are completable.
 
 By Proposition~\ref{prop:universal_family}, there exists a $k \in \mathbb{N}$ and a homomorphism of $R$-algebras $\varphi \colon A^R_{r,k} \rightarrow A$ such that $\alpha$ is in the same $\mathrm{GL}_{r+1}$-orbit as the image of some $\beta \in \mathrm{Um}_{r+1}\bigl(A^R_{r,k}(\!(t)\!)\bigr)$ under $\varphi(\!(t)\!) \colon A^R_{r,k}(\!(t)\!) \rightarrow A(\!(t)\!)$. Without loss of generality, we can assume that  $\alpha$ is equal to the image of $\beta$.
 
 Let $\mathfrak{p} \defl \varphi^{-1}(\mathfrak{m})$ and let
 \[
 \xymatrix{ (A^R_{r,k})_{\mathfrak{p}}  \ar[r]^-{\varphi^{\prime}} & A \\ A^R_{r,k} \ar[ru]_{\varphi} \ar[u] }
 \]
 be the canonical factorization through the localization. This factorization immediately tells us that $\alpha$ also is the image of some $\beta^{\prime} \in \mathrm{Um}_{r+1}\bigl( (A^R_{r,k})_{\mathfrak{p}}(\!(t)\!)\bigr)$ under the homomorphism $\varphi^{\prime}(\!(t)\!) \colon (A^R_{r,k})_{\mathfrak{p}}(\!(t)\!) \rightarrow A(\!(t)\!)$, so it suffices to show that all unimodular rows in $\mathrm{Um}_{r+1}\bigl( (A^R_{r,k})_{\mathfrak{p}}(\!(t)\!)\bigr)$ are completable.
 
 By the implication $(i) \Rightarrow (iv)$ of Theorem~\ref{thm:Laurent}, it suffices to prove the assertion that all unimodular rows $\gamma \in \mathrm{Um}_{r+1}\bigl( (A^R_{r,k})_{\mathfrak{p}}[t]\bigr)$ are completable. Since $(A^R_{r,k})_{\mathfrak{p}}$ is local, there exists a matrix $M \in \mathrm{GL}_{r+1}\bigl((A^R_{r,k})_{\mathfrak{p}}\bigr)$ such that $\gamma(0)M=(1, 0,\ldots, 0)$, so $\gamma^{\prime} \defl \gamma M$ lies in $\mathrm{Um}_{r+1} \bigl( (A^R_{r,k})_{\mathfrak{p}}[t],(t)\bigr)$. Therefore it suffices to show that all unimodular rows in the latter set are completable.
 
 By Proposition~\ref{prop:unimodular_row_and_localization}, this is the case if all unimodular rows in $\mathrm{Um}_{r+1} \bigl( A^R_{r,k}[t],(t)\bigr)$ are completable. Since $A^R_{r,k}$ is the localization of $B^R_{r,k}$ at $\overline{p_k}$ (see Definition~\ref{dfn:universal_rings}), we can apply Proposition~\ref{prop:unimodular_row_and_localization} once more to reduce the claim to the assertion that all unimodular rows in $\mathrm{Um}_{r+1}\bigl(B^R_{r,k}[t],(t)\bigr)$ are completable.
 
 Recall that $B^R_{r,k}$ is the directed colimit of the $R$-algebras
 \[
   \xymatrix{B^R_{r,k,0} \ar[r] & B^R_{r,k,1} \ar[r] & \ldots \ar[r] & B^R_{r,k,n} \ar[r] & \ldots}
 \]
 of Definition~\ref{dfn:building_blocks} (see Lemma~\ref{lemma:filtered_colimit}). Since every polynomial unimodular row $\delta \in \mathrm{Um}_{r+1}\bigl(B^R_{r,k}[t],(t)\bigr)$ is defined using only a finite number of coefficients and equations, there exists an $n \in \mathbb{N}$ such that $\delta$ lies in the image of
 \[
 \mathrm{Um}_{r+1} \bigl( B^R_{r,k,n}[t],(t) \bigr) \rightarrow \mathrm{Um}_{r+1}\bigl(B^R_{r,k}[t],(t)\bigr) \smash{\rlap{.}}
 \]
  Thus $\delta$ is completable by Assumption~$(iii)$.
 \end{proof}
\section{A test set of unique factorization domains}\label{section:reduction}

The aim of this section is to show that the algebras $B^R_{r,k,n}$ of Definition~\ref{dfn:building_blocks} inherit certain nice properties of $R$. The argument uses an iterative procedure involving auxiliary rings. It relies on two basic facts: if $B$ is a unique factorization domain and $f \in B$ is irreducible, then $f$ is prime, so the quotient ring $B \slash f$ is a domain. Secondly, we use Nagata's Criterion for a ring to be a unique factorization domain: if $A$ is a noetherian domain, $a \in A$ a prime element, and the localization $A_a$ is a unique factorization domain, then $A$ is a unique factorization domain.

 In order to show that $a \in A$ is prime, we will use an isomorphism $A \slash a \cong B \slash f$ for a suitable unique factorization domain $B$ and irreducible element $f \in B$. In order to show the irreducibility of this element $f$, we use the following lemmas as a base case and as a reduction to this base case respectively.
 
 \begin{lemma}\label{lemma:xy+zw_irreducible}
 Let $R$ be an integral domain. Then the element $xy+zw$ of the polynomial ring $R[x,y,z,w]$ is irreducible.
 \end{lemma}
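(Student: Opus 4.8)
The plan is to regard $f \defl xy + zw$ as a polynomial of degree one in the single variable $x$ over the coefficient ring $S \defl R[y,z,w]$, namely $f = y \cdot x + zw$, and to show it is irreducible there. Since $R$ is a domain, so is $S$, and for any factorization $f = gh$ in $S[x] = R[x,y,z,w]$ the $x$-degrees add (the leading $x$-coefficients multiply and $S$ has no zero-divisors). As $\deg_x f = 1$, one factor --- say $h$ --- has $x$-degree $0$, that is $h \in S$, while $g = c_1 x + c_0$ has $x$-degree $1$. Comparing the coefficients of $x^1$ and of $x^0$ gives $y = h c_1$ and $zw = h c_0$, so $h$ is a common divisor of $y$ and $zw$ in $S$.

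The crux is therefore to show that every common divisor of $y$ and $zw$ in $S = R[y,z,w]$ is a unit. Here I would use that $y$ is a prime element of $S$: the quotient $S/(y) \cong R[z,w]$ is a domain because $R$ is, so $(y)$ is prime. Moreover $y \nmid zw$, since reducing modulo $y$ sends $zw$ to the nonzero element $zw \in R[z,w]$. If $h$ divided both $y$ and $zw$ and were not a unit, then from $y = h c_1$ and the irreducibility of the prime $y$ the factor $c_1$ would be a unit, making $h$ an associate of $y$; but then $y \mid h \mid zw$, contradicting $y \nmid zw$. Hence $h$ is a unit of $S$, and therefore a unit of $R[x,y,z,w]$ (the units of a polynomial ring over a domain are just the units of the base). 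Since $f$ is visibly nonzero and, having positive degree, not a unit, this proves that $f$ is irreducible.

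I expect the only genuine content to be the coprimality of $y$ and $zw$ in $S$; everything else is degree bookkeeping in the variable $x$. It is worth emphasising why one cannot simply quote Gauss's Lemma: the base $R$, and hence $S = R[y,z,w]$, need not be factorial, so the slogan ``a primitive polynomial of degree one is irreducible'' is unavailable, and the primality of $y$ is what takes its place. As a conceptual sanity check, the factorization fails because $xy+zw$ is a quadratic form of rank $4$ (at least in characteristic different from two), whereas any product of two linear forms is a quadratic form of rank at most $2$; equivalently, over the fraction field of $R$ the affine quadric $\{xy+zw=0\}$ is irreducible, while a nontrivial factorization would present it as a union of two hyperplanes.
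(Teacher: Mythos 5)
Your proof is correct, but it takes a genuinely different route from the paper's. You regard $xy+zw$ as a polynomial of degree one in the variable $x$ over $S=R[y,z,w]$, so that in any factorization one factor $h$ lies in $S$ and divides both coefficients $y$ and $zw$; the primality of $y$ in $S$ (via $S\slash (y)\cong R[z,w]$) together with $y \nmid zw$ then forces $h$ to be a unit. The paper instead exploits homogeneity: since $R$ is a domain, both factors of the homogeneous polynomial $xy+zw$ must themselves be homogeneous; a coefficient comparison shows that two linear forms cannot multiply to $xy+zw$, so one factor is a constant $p\in R$, and a non-unit constant is ruled out by reducing modulo a maximal ideal $\mathfrak{m}$ containing it, which would make $xy+zw$ vanish identically in $R\slash\mathfrak{m}[x,y,z,w]$. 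Your approach buys a fully explicit and self-contained argument: the computation with ``generic'' linear forms, which the paper leaves to the reader, is avoided entirely, and the primality of $y$ cleanly substitutes for factoriality of $S$, exactly as you note in your remark about Gauss's Lemma. What the paper's approach buys is reusability: the pattern ``homogeneous factors, rule out bidegree $(1,1)$, then kill a non-unit constant factor by reduction mod $\mathfrak{m}$'' is precisely the template that is generalized in Lemma~\ref{lemma:irreducible_criterion} to quotient rings $R[S]\slash (f_1,\ldots,f_m)$, where variable reductions take over the role of the explicit linear-form computation; your single-variable degree bookkeeping and the primality of a chosen variable are not readily available in those quotients, which is presumably why the paper sets the base case up in this form.
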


\begin{proof}
 Assume $xy+zw=pq$. Since $R$ is a domain, both $p$ and $q$ must be homogeneous. By contemplating the product of two generic homogeneous polynomials of degree $1$ and by using the fact that the monomials form an $R$-basis of $R[x,y,z,w]$ one finds that $p$ and $q$ cannot both be of degree $1$. Thus one of them has to be of degree $0$, that is, an element of $R$. Without loss of generality we have $p \in R$. If $p$ is not a unit, then there exists a maximal ideal $\mathfrak{m} \subseteq R$ with $p \in \mathfrak{m}$. It follows that $xy+zw=0$ in $R\slash \mathfrak{m}[x,y,z,w]$, a contradiction. Thus $p$ is a unit, so $xy+zw$ is irreducible.
\end{proof}

 Given a polynomial ring on some set of variables $S$, a \emph{variable reduction} is a set $T$ and a function $\varphi \colon S \rightarrow T \cup \{0\}$. For any commutative ring $R$, it induces an $R$-algebra homomorphism
 \[
 \varphi_{\ast} \colon R[S] \rightarrow R[T]
 \]
 sending each $s \in S$ to $\varphi(s) \in R[T]$.
 
 Since each variable of $S$ is sent either to a single variable in $T$ or to $0$, we find that $\varphi_{\ast}$ sends any homogeneous polynomial of degree $d$ either to a homogeneous polynomial of degree $d$ or to $0$.
 
 The following lemma will be used to show that an element $\overline{g}$ in some quotient ring $R[S] \slash f_1, \ldots, f_m$ is irreducible. It is proved in three steps. First a product decomposition of $\overline{g}$ and some properties of the quotient ring are used to establish an equation in $R[S]$ involving $g$ and all the $f_i$. In the second step, we assume that certain variable reductions exist which send almost all the $f_i$ to $0$. The equation of step one then implies that one of the factors of $\overline{g}$ lies in $R$. In the final step, we use an assumption on the monomials occurring in $g$ and the $f_i$ to ensure that this factor is a unit.

\begin{lemma}\label{lemma:irreducible_criterion}
 Let $R$ be a unique factorization domain and let $S$ be a set. Let $f_1, \ldots, f_m \in R[S]$ be homogeneous polynomials of degree $2$. Assume that the quotient ring $B \defl R[S] \slash f_1, \ldots, f_m$ is a domain. Finally, assume that there exists a monomial whose coefficient in $g$ is a unit and whose coefficient in $f_i$ is $0$ for all $i=1, \ldots, m$. Then $\overline{g} \in B$ is irreducible in each of the following cases:
 \begin{enumerate}
 \item[(1)] There exists a variable reduction $\varphi \colon S \rightarrow \{c_0,d_0,e_0,f_0\} \cup \{0\}$ such that $\varphi_{\ast} g=c_0d_0+e_0 f_0$ and $\varphi_{\ast} f_i=0$ for all $i=1,\ldots, m$;
 \item[(2)] There exists a variable reduction $\varphi \colon S \rightarrow \{c_0,d_0,e_0,f_0,c_{\ell},d_{\ell},e_{\ell},f_{\ell} \} \cup \{0\}$ such that $\varphi_{\ast} g=c_0d_{\ell}+c_{\ell} d_0+e_0 f_{\ell}+e_{\ell} f_0$ and $\varphi_{\ast} f_i=0$ for all $i=1,\ldots, m$;
 \item[(3)] There exists a variable reduction $\varphi \colon S \rightarrow \{c_0,d_0,e_0,f_0,c_{\ell},d_{\ell},e_{\ell},f_{\ell} \} \cup \{0\}$ and a $j \in \{1, \ldots, m\}$ such that $\varphi_{\ast} g=c_0d_{\ell}+c_{\ell} d_0+e_0 f_{\ell}+e_{\ell} f_0$, the image $\varphi_{\ast} f_i$ is $0$ for all $i \neq j$, and $\varphi_{\ast} f_j=c_{\ell} d_{\ell} + e_{\ell} f_{\ell}$.
 \end{enumerate}
\end{lemma}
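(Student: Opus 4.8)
The plan is to assume a nontrivial factorization $\overline{g}=\overline{p}\,\overline{q}$ in $B$ and to show that one of the factors is a unit. I would begin by recording the structural features of $B$ that drive Step~1. Since the $f_i$ are homogeneous of degree $2$, the ideal $(f_1,\ldots,f_m)$ is homogeneous and generated in degree $2$, so $B$ is $\mathbb{N}$-graded with $B_0=R$; by hypothesis $B$ is a domain, and $g$ is homogeneous of degree $2$. In an $\mathbb{N}$-graded domain the only factorizations of a homogeneous element are homogeneous (the lowest and highest homogeneous parts of $\overline{p}$ and $\overline{q}$ multiply to nonzero terms), and every unit lies in degree $0$; hence I may take $\overline{p},\overline{q}$ homogeneous of degrees $a,b$ with $a+b=2$. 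Lifting to homogeneous $p\in R[S]_a$, $q\in R[S]_b$ and using that the degree-$2$ part of $(f_1,\ldots,f_m)$ consists of the $R$-linear combinations of the $f_i$, the relation lifts to an equation $g=pq+\sum_{i=1}^m c_i f_i$ with constants $c_i\in R$. This is the equation of Step~1.

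For Step~2 I would apply the variable reduction $\varphi_{\ast}$ to this equation. By assumption it annihilates every $f_i$ in cases~(1) and~(2), and all but $f_j$ in case~(3), so I obtain $\varphi_{\ast}g=(\varphi_{\ast}p)(\varphi_{\ast}q)$ in the first two cases and $(\varphi_{\ast}p)(\varphi_{\ast}q)=\varphi_{\ast}g-c_j\varphi_{\ast}f_j=:H$ in the third. As $\varphi_{\ast}$ preserves homogeneous degree, $\varphi_{\ast}p$ and $\varphi_{\ast}q$ are homogeneous of degrees $a,b$ (or zero), so the whole point is to exclude $a=b=1$. For this I would show that the relevant quadratic form admits no factorization into two linear forms. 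In case~(1) the form is $c_0d_0+e_0f_0$, which is irreducible by Lemma~\ref{lemma:xy+zw_irreducible}. In cases~(2) and~(3) I would postcompose with a further variable reduction $\psi$ that retains the variables $c_0,d_{\ell},e_0,f_{\ell}$ and sends all others to $0$; both $\varphi_{\ast}g$ and $H$ are then sent to $c_0d_{\ell}+e_0f_{\ell}$, again irreducible by Lemma~\ref{lemma:xy+zw_irreducible}, while a hypothetical degree-$1$-times-degree-$1$ factorization would descend along $\psi$ to a nontrivial factorization of $c_0d_{\ell}+e_0f_{\ell}$. Ruling out $a=b=1$ forces one of $p,q$ to have degree $0$; say $\overline{p}=\overline{c}$ with $c\in R$, which is the assertion that one factor of $\overline{g}$ lies in $R$.

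Finally, Step~3 compares the coefficient of the distinguished monomial $\mu$ on both sides of $g=cq+\sum_i c_i f_i$: the $f_i$ contribute $0$ to this coefficient, so $c$ times the $\mu$-coefficient of $q$ equals the $\mu$-coefficient of $g$, which is a unit; therefore $c\in R^{\times}$ and $\overline{p}$ is a unit in $B$. Combined with the observations that $\overline{g}\neq 0$ (otherwise $\varphi_{\ast}g$ would lie in the ideal generated by the images $\varphi_{\ast}f_i$, contradicting that it is a nonzero form outside that ideal) and that $\overline{g}$ is a non-unit (being homogeneous of positive degree in a graded domain with $B_0=R$), this proves that $\overline{g}$ is irreducible. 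I expect the main obstacle to be the irreducibility input in Step~2: because $R$ is only assumed to be a UFD rather than a field, one cannot argue directly via the rank of a quadratic form, and the cleanest route is precisely to reduce each of the three configurations to the base case of Lemma~\ref{lemma:xy+zw_irreducible} through an auxiliary variable reduction.
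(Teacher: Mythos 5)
Your proof is correct, and its overall skeleton coincides with the paper's: the same graded lifting to an equation $g = pq + \sum_i \gamma_i f_i$ with $\gamma_i \in R$ (the paper gets this by splitting $\beta_i$ into constant and positive-degree parts, you by noting the degree-$2$ part of the ideal is the $R$-span of the $f_i$ --- same argument), the same use of Lemma~\ref{lemma:xy+zw_irreducible} to exclude a degree-$(1,1)$ splitting, and the same use of the distinguished monomial at the end (the paper reduces modulo a maximal ideal containing the constant factor, you divide the unit coefficient directly; these are equivalent). Where you genuinely diverge is Case~(3), which is the only case where the paper does real work. There, the paper first proves $\gamma_j=0$: it sets $c_0,d_0,e_0,f_0$ to zero, obtains $\gamma_j(c_\ell d_\ell + e_\ell f_\ell)=-\overline{\varphi_\ast p}\cdot\overline{\varphi_\ast q}$, and invokes primality of $c_\ell d_\ell + e_\ell f_\ell$ in the UFD $R[c_\ell,d_\ell,e_\ell,f_\ell]$ --- the sole point where the hypothesis that $R$ is a UFD rather than merely a domain is used --- and only then performs a second reduction to reach Lemma~\ref{lemma:xy+zw_irreducible}. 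Your reduction $\psi$ retaining $c_0,d_\ell,e_0,f_\ell$ annihilates $\varphi_\ast f_j = c_\ell d_\ell + e_\ell f_\ell$ outright (each of its monomials contains $c_\ell$ or $e_\ell$), so the composite $\psi\circ\varphi$ is itself a variable reduction exhibiting Case~(3) --- and likewise Case~(2) --- as an instance of Case~(1) after renaming variables. This buys two things: a uniform one-step treatment of all three cases, and the observation that the UFD hypothesis on $R$ in this lemma is superfluous, since Lemma~\ref{lemma:xy+zw_irreducible} and your argument only require $R$ to be an integral domain. (Nothing is lost in the application, where $R$ is a UFD anyway, but your version is both shorter and slightly more general.)
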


 \begin{proof}
 Since all the polynomials $f_i$ are homogeneous, the quotient ring $B$ is $\mathbb{N}$-graded, with degree $d$ part given by the image of the homogeneous polynomials of degree $d$. Since $B$ is a domain it follows that $\overline{g}$ is not a unit (since it has degree $>0$). The fact that $B$ is a domain also implies that the elements $\overline{p}$, $\overline{q}$ occurring in a factorization $\overline{g}=\overline{p} \overline{q}$ must be homogeneous of degree at most $2$, and the sum of these degrees must be $2$. By definition of the grading we can assume that the preimages $p$, $q$ of $\overline{p}$, $\overline{q}$ in $R[S]$ are homogeneous polynomials of the same degree. It follows that there exist polynomials $\beta_i \in R[S]$ such that the equation
 \[
 g=pq+\textstyle\sum_{i=1}^m \beta_i f_i
 \]
 holds in $R[S]$. Writing $\beta_i=\gamma_i + \delta_i$ where $\gamma_i \in R$ is the constant part of $\beta_i$, we thus have $ g=pq+\sum_{i=1}^m \gamma_i f_i + \sum_{i=1}^m \delta_i f_i$. No monomial of degree greater than $2$ occurs in $g -pq -\sum_{i=1}^m \gamma_i f_i$ and all monomials of $\sum_{i=1}^m \delta_i f_i$ have degree $\geq 3$, so we must have $\sum_{i=1}^m \delta_i f_i=0$. We thus have
 \begin{equation}\label{eqn:reducible}
 g=pq+\textstyle \sum_{i+1}^m \gamma_i f_i \tag{$\ast$}
 \end{equation}
 in $R[S]$ for some $\gamma_i \in R$, $p$, $q$ homogeneous of degree $\leq 2$, and the sum of the degrees of $p$ and $q$ is $2$. 
 
 We claim that the degrees of $p$ and $q$ cannot both be equal to $1$ if variable reductions as in Case~(1)--(3) exist. We prove this by contradiction, so we assume that both $p$ and $q$ have degree $1$.
 
 Case~(1): The variable reduction in Case~(1), combined with Equation~\eqref{eqn:reducible}, implies that
 \[
 c_0 d_0 +e_0 f_0 =\varphi_{\ast} g=\varphi_{\ast} p \cdot \varphi_{\ast} q
 \]
 is a product of homogeneous polynomials of degree $1$. This contradicts the irreducibility of $c_0 d_0 + e_0 f_0 \in R[c_0,d_0,e_0,f_0]$ (see Lemma~\ref{lemma:xy+zw_irreducible}).
 
 Case~(2): From the variable reduction in Case~(2) we can construct one as in Case~(1) by sending $e_0$, $e_{\ell}$, $f_0$, and $f_{\ell}$ to $0$, leading to the same contradiction as in Case~(1).
 
 Case~(3): The variable reduction in Case~(3), applied to Equation~\eqref{eqn:reducible} yields the equation
 \begin{equation}\label{eqn:case_3}
 c_0 d_{\ell}+c_{\ell} d_0+e_0 f_{\ell}+e_{\ell} f_0 = \varphi_{\ast}(p) \cdot \varphi_{\ast}(q)+\gamma_j (c_{\ell} d_{\ell}+e_{\ell} f_{\ell}) \tag{$\ast \ast$}
 \end{equation}
 in $R[c_0, c_{\ell},d_0, d_{\ell}, e_{0}, e_{\ell}, f_0, f_{\ell}]$. Our assumption implies that both $\varphi_{\ast}(p)$ and $\varphi_{\ast}(q)$ are homogeneous of degree $1$.
 
 If we send each $c_0, d_0, e_0, f_0$ to $0$, we find that the equation
 \[
 \gamma_j(c_{\ell} d_{\ell}+e_{\ell} f_{\ell})=-\overline{\varphi_{\ast} (p)} \cdot \overline{\varphi_{\ast} (q)}
 \]
 holds in $R[c_{\ell},d_{\ell}, e_{\ell}, f_{\ell}]$. Note that here we use the fact that $\gamma_j \in R$ so that $\overline{\gamma_j}=\gamma_j$. Since $R$ is a unique factorization domain and $c_{\ell} d_{\ell}+ e_{\ell} f_{\ell}$ is irreducible (see Lemma~\ref{lemma:xy+zw_irreducible}), the element $c_{\ell} d_{\ell}+ e_{\ell} f_{\ell}$ is prime. If $\gamma_j \neq 0$, then this prime element divides either $\overline{\varphi_{\ast} (p)}$ or $\overline{\varphi_{\ast} (q)}$, contradicting the fact that these are both homogeneous of degree $1$. Thus we must have $\gamma_j=0$.
 
 Combining this fact with Equation~\eqref{eqn:case_3}, we find that the equation
 \[
 c_0 d_{\ell}+c_{\ell} d_0+e_0 f_{\ell}+e_{\ell} f_0 = \varphi_{\ast}(p) \cdot \varphi_{\ast}(q)
 \]
 holds. If we send $e_0$, $e_\ell$, $f_0$, and $f_{\ell}$ to $0$, we find that $c_0 d_{\ell}+c_{\ell} d_0$ is a product of two homogeneous polynomials of degree $1$, contradicting the fact that it is irreducible in $R[c_0, c_{\ell}, d_0, d_{\ell}]$ (see Lemma~\ref{lemma:xy+zw_irreducible}).
 
 This case-by-case analysis shows that the degrees of $p$ and $q$ cannot both be equal to $1$. Thus we can assume without loss of generality that $p \in R$ and the degree of $q$ is $2$. We claim that $p$ is a unit in $R$, which will establish the fact that $\overline{g}$ is irreducible in $B$. If $p$ is not a unit, then there exists a maximal ideal $\mathfrak{m} \subseteq R$ with $p \in \mathfrak{m}$. It follows from Equation~\eqref{eqn:reducible} that
 \[
 \overline{g}=\textstyle\sum_{i=1}^m \overline{\gamma_i} \overline{f_i}
 \]
 holds in $R \slash \mathfrak{m} [S]$. But by assumption, one monomial of $\overline{g}$ has a unit as coefficient and its coefficient in each $\overline{f_i}$ is $0$, so the above equation contradicts the fact that the monomials form an $R \slash \mathfrak{m}$-basis of $R\slash \mathfrak{m} [S]$. Thus $p$ is indeed a unit, which shows that $\overline{g} \in B$ is irreducible.
 \end{proof}
 
 In order to apply Nagata's Criterion, we will need to show that certain localizations are unique factorization domains. In all cases, we will use the following lemma.
 
 \begin{lemma}\label{lemma:localization_isomorphism}
 Let $A$ be a commutative ring, $f_0, \ldots, f_{\ell} \in A[t_0, \ldots, t_{\ell}]$, and let $B \defl A[t_0, \ldots, t_{\ell}] \slash f_0, \ldots, f_{\ell}$. Assume that there exist $a, g_0 \in A$ and $g_i \in A[t_0, \ldots, t_{i-1}]$ for $0 < i \leq \ell$ such that the equations
 \[
 f_i=at_i - g_i(t_0, \ldots, t_{i-1})
 \]
 hold for $i=0, \ldots, \ell$. Then the canonical homomorphism $\varphi \colon A \rightarrow B$ induces an isomorphism of the localizations $\varphi_a \colon A_a \cong B_a$ at the multiplicative set $\{1,a,a^2,\ldots\}$.
 \end{lemma}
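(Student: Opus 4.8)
The plan is to identify $B_a$ with $A_a$ as an $A_a$-algebra and check that the identification is the localized structure map $\varphi_a$. Since localization commutes with the formation of quotients, there is a natural isomorphism
\[
B_a \cong A_a[t_0, \ldots, t_{\ell}] \slash (f_0, \ldots, f_{\ell})
\]
of $A_a$-algebras. In $A_a$ the element $a$ is invertible, so I would set $h_i \defl a^{-1} g_i$, with $h_0 \in A_a$ and $h_i \in A_a[t_0, \ldots, t_{i-1}]$ for $i \geq 1$. Then $f_i = a(t_i - h_i)$, and since $a$ is a unit we get $(f_i) = (t_i - h_i)$ for each $i$; hence
\[
B_a \cong A_a[t_0, \ldots, t_{\ell}] \slash (t_0 - h_0, \ldots, t_{\ell} - h_{\ell}).
\]

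The key feature of these relations is that they are \emph{triangular}: each $h_i$ involves only $t_0, \ldots, t_{i-1}$. I would eliminate the variables one at a time, starting from the top. Writing $A_a[t_0, \ldots, t_{\ell}] = A_a[t_0, \ldots, t_{\ell-1}][t_{\ell}]$, the quotient by the monic linear polynomial $t_{\ell} - h_{\ell}$ is isomorphic to $A_a[t_0, \ldots, t_{\ell-1}]$ via the substitution $t_{\ell} \mapsto h_{\ell}$. Because none of $h_0, \ldots, h_{\ell-1}$ involves $t_{\ell}$, the remaining generators $t_0 - h_0, \ldots, t_{\ell-1} - h_{\ell-1}$ are carried to generators of exactly the same form in $A_a[t_0, \ldots, t_{\ell-1}]$. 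Repeating this elimination, formally by induction on $\ell$, exhausts all the variables and produces an isomorphism $B_a \cong A_a$ of $A_a$-algebras.

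It then remains to check that this isomorphism is precisely $\varphi_a$, or equivalently that the composite $A_a \to B_a \xrightarrow{\sim} A_a$ is the identity. Every map in sight, namely the structure map $A_a \to B_a$ and each elimination isomorphism, is a homomorphism of $A_a$-algebras and hence restricts to the identity on $A_a$; therefore so does the composite, which being an $A_a$-algebra endomorphism of $A_a$ must equal $\id_{A_a}$. This shows that $\varphi_a$ is an isomorphism. The one point requiring care, and the only real obstacle, is the elimination step: one must verify that quotienting the polynomial ring by the monic linear relation in the top variable returns the polynomial ring in the remaining variables with the lower relations left intact. The triangular dependence of the $h_i$ is exactly what guarantees this and makes the induction go through; everything else is bookkeeping.
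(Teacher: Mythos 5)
Your proof is correct, but it is organized differently from the paper's. The paper constructs an explicit two-sided inverse $\psi \colon B_a \rightarrow A_a$: it inductively defines $r_0 \defl a^{-1}g_0$ and $r_i \defl a^{-1}g_i(r_0,\ldots,r_{i-1})$, sends $t_i \mapsto r_i$, checks that each $f_i$ dies under this map so that it descends to $B_a$, and then verifies both composites by hand --- the delicate point being $\varphi_a \circ \psi = \id_{B_a}$, i.e.\ that $\overline{t_i}=r_i$ holds in $B_a$, which needs its own little induction exploiting the invertibility of $a$. You instead localize first, replace the relations $f_i = a t_i - g_i$ by the equivalent relations $t_i - h_i$ with $h_i = a^{-1}g_i$, and eliminate the variables top-down via the standard isomorphism $C[x]\slash (x-c) \cong C$, using the triangularity to see that the lower relations are untouched at each stage. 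This builds what is in substance the same map (your composite of eliminations also sends $t_i \mapsto r_i$), but as a composite of isomorphisms that are isomorphisms by construction, so the paper's fiddly verification that $\overline{t_i}=r_i$ in $B_a$ evaporates; the only thing left to check is the formal fact that an $A_a$-algebra endomorphism of $A_a$ is the identity. The trade-off: the paper's argument is self-contained and exhibits the inverse concretely, while yours leans on standard facts (localization commutes with quotients, the evaluation isomorphism for a polynomial ring) to make the well-definedness and bijectivity automatic. Both proofs hinge on exactly the same structural feature, the triangular dependence of the $g_i$ (equivalently your $h_i$) on the earlier variables.
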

 
 \begin{proof}
 We construct an $A$-homomorphism $\psi \colon B_a \rightarrow A_a$ as follows. Let $r_0 \defl a^{-1} g_0 \in A_a$ and inductively set $r_i \defl a^{-1} g(r_0, \ldots, r_{i-1}) \in A_a$ for $i >0$. Sending $t_i$ to $r_i$ defines an $A$-homomorphism
 \[
 \psi^{\prime} \colon A[t_0, \ldots, t_{\ell}] \rightarrow A_a \smash{\rlap{.}}
 \] 
 We claim that the $f_i$ lie in the kernel of $\psi^{\prime}$. Indeed, we have $\psi^{\prime}(f_i)=ar_i-g(r_0,\ldots,r_{i-1})$, which is $0$ by construction of the $r_i$. Thus $\psi^{\prime}$ uniquely factors through the quotient $B$, yielding an $A$-homomorphism $B \rightarrow A_a$. Since $a$ is sent to a unit, this factors uniquely through the localization, and this yields the desired homomorphism $\psi \colon B_a \rightarrow A_a$.
 
 By construction, we have $\psi \circ \varphi_a = \id_{A_a}$. To see that $\varphi_a \circ \psi$ is the identity, we only need to check that this composite sends $\overline{t_i}$ to $\overline{t_i}$ for $0 \leq i \leq \ell$. This boils down to checking that $\overline{t_i}=r_i$ holds in $B_a$. Since $a$ is a unit, this is the case if and only if $a\overline{t_i}=ar_i$ holds. We have $a r_0=g_0$, so this holds for $i=0$ by definition of $f_0$. By iterating we find that
 \[
 a r_i =g_i(r_0, \ldots, r_i)=g_i(\overline{t_0}, \ldots, \overline{t_{i-1}})
 \]
 holds, and thus $a r_i=a \overline{t_i}$ by definition of $f_i$. Since $a$ is invertible in $B_a$, we have $\overline{t_i}=r_i$, as claimed.
 \end{proof}
 
 Let $R$ be a commutative ring. Recall from Definition~\ref{dfn:building_blocks} that $S_{r,n}$ is the set
 \[
 S_{r,n} = \bigl\{x_{i,j} \big\vert (i,j) \in \{0,\ldots,r\}\times \{0,\ldots,n\}  \bigr\} \amalg \bigl\{y_{i,j} \big\vert (i,j) \in \{0,\ldots,r\}\times \{0,\ldots,n\}  \bigr\} \smash{\rlap{,}}
 \]
 $p_{\ell}$ is the polynomial
 \[
 p_{\ell} = \sum_{i=0}^r \sum_{j=0}^{\ell}  x_{i,j} \cdot y_{i,\ell-j}
 \]
 in $R[S_{r,n}]$, and $J_{k} \subseteq R[S_{r,n}]$ is the ideal generated by the set $\{p_0, \ldots, \widehat{p_k}, \ldots, p_n\}$. The commutative $R$-algebra
 \[
 B^R_{r,k,n} = R[S_{r,n}] \slash J_k
 \]
 is the resulting quotient algebra of the polynomial $R$-algebra $R[S_{r,n}]$. Throughout the next theorem, we keep $r \geq 2$, $k$, and $n$ fixed and we let $S \defl S_{r,n}$.
 
 \begin{thm}\label{thm:building_blocks_ufd}
 Let $R$ be a noetherian unique factorization domain. Then the sequence
 \[
 p_n, p_{n-1}, \ldots, \widehat{p_k}, \ldots, p_0
 \]
 is a regular sequence in the polynomial algebra $R[S]$. The quotient algebra $B^{R}_{r,k,n}$ of Definition~\ref{dfn:building_blocks} is a noetherian unique factorization domain.
 \end{thm}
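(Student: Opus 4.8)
\section*{Proof proposal}

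The plan is to prove both assertions simultaneously by peeling off the relations one at a time, from $p_n$ down to $p_0$, while maintaining the invariant that the intermediate quotient is a noetherian unique factorization domain. For $0 \le m \le n+1$ set
\[
C_m \defl R[S] \slash \bigl( p_\ell \mid m \le \ell \le n,\ \ell \neq k \bigr),
\]
so that $C_{n+1}=R[S]$ and $C_0 = B^R_{r,k,n}$. I would prove by downward induction on $m$ that every $C_m$ is a noetherian unique factorization domain and that, for $m \neq k$, the image $\overline{p_m}$ is a nonzero element of $C_{m+1}$. Since $C_{m+1}$ is then a domain, $\overline{p_m}$ is automatically a non-zero-divisor, so this also shows that $p_n, p_{n-1}, \ldots, \widehat{p_k}, \ldots, p_0$ is a regular sequence. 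The base case $C_{n+1}=R[S]$ is a polynomial ring in finitely many variables over a noetherian unique factorization domain, hence is one by the Hilbert basis theorem and Gauss's theorem; noetherianity of every $C_m$ is clear, as each is a finitely generated $R$-algebra.

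For the inductive step (the case $m=k$ being vacuous), I would first show that $\overline{p_m}$ is irreducible in the unique factorization domain $C_{m+1}$ by applying Lemma~\ref{lemma:irreducible_criterion} with $g=p_m$ and the generators $f_i$ running over the $p_\ell$ with $\ell > m$, $\ell \neq k$. The monomial $x_{0,m} y_{0,0}$ has coefficient $1$ in $p_m$ and coefficient $0$ in every other $p_\ell$ (its two factors can only be multiplied together when $\ell = m$), and the Case~(1) variable reduction sending $x_{0,m}, y_{0,0}, x_{1,m}, y_{1,0}$ to $c_0, d_0, e_0, f_0$ and all remaining variables to $0$ carries $p_m$ to $c_0 d_0 + e_0 f_0$ and every other $p_\ell$ to $0$; this is where $r \ge 2$ enters. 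Since $C_{m+1}$ is a unique factorization domain, irreducibility of $\overline{p_m}$ yields primality, so $C_m = C_{m+1} \slash (\overline{p_m})$ is a domain and $\overline{p_m}$ is a non-zero-divisor, as required.

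It then remains to upgrade ``domain'' to ``unique factorization domain'' for $C_m$, which I would do with Nagata's Criterion applied to the element $\overline{y_{0,0}}$. The localization $(C_m)_{y_{0,0}}$ is a unique factorization domain: writing each relation in the form $p_\ell = y_{0,0}\, x_{0,\ell} - g_\ell$, where $g_\ell$ collects the remaining terms and hence involves only the variables $x_{0,j}$ with $j < \ell$ together with variables that are not being solved for, Lemma~\ref{lemma:localization_isomorphism} (with solved variables $x_{0,\ell}$, $\ell \ge m$, $\ell \neq k$, ordered by increasing $\ell$) identifies $(C_m)_{y_{0,0}}$ with the localization at $y_{0,0}$ of a polynomial ring over $R$, which is a unique factorization domain. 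To invoke Nagata's Criterion it then suffices to check that $y_{0,0}$ is prime in $C_m$, that is, that $C_m \slash (y_{0,0})$ is a domain.

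This last point is the main obstacle. Setting $y_{0,0}=0$ deletes the leading term $x_{0,\ell} y_{0,0}$ from every relation, so the distinguished monomial used above disappears and the Case~(1) reduction is no longer available. I would instead establish the domain property by an entirely parallel descent carried out in the ring with $y_{0,0}$ set to zero: now $x_{0,0}$ plays the former role of $y_{0,0}$ (through the terms $x_{0,0}\, y_{0,\ell}$), so one applies Lemma~\ref{lemma:localization_isomorphism} to solve the variables $y_{0,\ell}$ and see that the localization at $x_{0,0}$ is a unique factorization domain, reducing the claim to the primality of $x_{0,0}$, and iterates. The surviving relations in this inner descent take the ``split'' shape $c_0 d_\ell + c_\ell d_0 + e_0 f_\ell + e_\ell f_0$ rather than $c_0 d_0 + e_0 f_0$, which is precisely what the Cases~(2) and~(3) of Lemma~\ref{lemma:irreducible_criterion} are designed to handle, the whole descent bottoming out at Lemma~\ref{lemma:xy+zw_irreducible}. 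The delicate part throughout is the bookkeeping: one must verify that each variable reduction annihilates exactly the intended generators while carrying the distinguished relation to an $xy+zw$-type irreducible, and that the triangular solvability demanded by Lemma~\ref{lemma:localization_isomorphism} is never disturbed by the skipped index $k$.
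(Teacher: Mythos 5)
Your outer skeleton (downward induction on $m$ through the chain $C_m$, irreducibility of $\overline{p_m}$ in $C_{m+1}$ via Lemma~\ref{lemma:irreducible_criterion}, then Nagata's Criterion with the localization computed by Lemma~\ref{lemma:localization_isomorphism}) is the same strategy as the paper's, and your irreducibility step is correct: the Case~(1) reduction built on the monomial $x_{0,m}y_{0,0}$ does kill every $p_\ell$ with $\ell \neq m$, and it is in fact cleaner than the paper's reductions, which avoid the variables $x_{0,j}$ (and are therefore pushed into Cases~(2) and~(3)) only because the paper's auxiliary rings have those variables killed. The identification $(C_m)_{y_{0,0}} \cong R[T]_{y_{0,0}}$ via the triangular shape $p_\ell = y_{0,0}x_{0,\ell} - g_\ell$ is also fine.

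The gap is exactly at the point you yourself call the main obstacle, and your proposed fix is not valid. To apply Nagata you must show that $y_{0,0}$ is prime in $C_m$, i.e.\ that $C_m \slash (y_{0,0})$ is a domain. You propose to handle this by showing that the localization of $C_m \slash (y_{0,0})$ at $x_{0,0}$ is a unique factorization domain and ``reducing the claim to the primality of $x_{0,0}$,'' then iterating. But primality of an element together with the localization at that element being a UFD does \emph{not} imply that the ring is a domain: in $D = k[x,y]\slash (xy)$ the element $x$ is prime ($D \slash (x) \cong k[y]$) and $D_x \cong k[x,x^{-1}]$ is a UFD, yet $D$ is not a domain. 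Nor can Nagata's Criterion be invoked inside this step, since it presupposes that the ring in question is a noetherian \emph{domain} --- which is precisely what is to be proved. Domain-ness of $C_m \slash (y_{0,0})$ can only come from one row up: writing $C_m \slash (y_{0,0}) \cong \bigl(C_{m+1} \slash (y_{0,0})\bigr) \slash (\overline{p_m})$, you need $C_{m+1}\slash(y_{0,0})$ to be a UFD and $\overline{p_m}$ irreducible there. This forces the quotients of the $C_m$ by sets of killed variables to be carried through the induction as first-class objects, with domain-ness at stage $(m, K)$ always deduced from UFD-ness at stage $(m+1, K)$, and UFD-ness at $(m,K)$ deduced (via Nagata and Lemma~\ref{lemma:localization_isomorphism}) from domain-ness at $(m, K \cup \{v\})$; the regress terminates because each step of the first kind raises the row index, which is bounded by $n$. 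This well-founded double induction is exactly the triangular array $C_{\ell,i}$ in the paper's proof. Your ``parallel descents'' gesture toward this structure, but as written the reduction inside each descent is the invalid implication above, and the bookkeeping that makes the recursion close up --- the analogue of the paper's constraint $i \le \ell$ --- is never set up.
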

 
\begin{proof}
 We use the notation $a_j \defl x_{0,j}$ and $b_j\defl y_{0,j}$ for $j=0,\ldots, n$. For $0 \leq \ell \leq n$ and $0 \leq i \leq \ell$ we let
 \[
 C_{\ell,i} \defl R[S] \slash p_n, p_{n-1}, \ldots, \widehat{p_k}, \ldots, p_{\ell}, a_0, \ldots, a_i
 \]
 and we let $C_{\ell, -1} \defl R[S] \slash p_n, p_{n-1}, \ldots, \widehat{p_k}, \ldots, p_{\ell}$. The final sequence of auxiliary algebras we need is $C_{n+1,-1} \defl R[S]$ and $C_{n+1,i} \defl R[S] \slash a_0, \ldots, a_i$ for $0 \leq i \leq n$. These $R$-algebras fit in the matrix
 \[
 \begin{matrix}
 C_{n+1,-1} & C_{n+1,0} && \cdots && C_{n+1,n} \\
 C_{n,-1} & C_{n,0} && \cdots && C_{n,n} \\
 \vdots & \vdots && & \iddots \\
 C_{\ell, -1} & C_{\ell, 0}  & \cdots & C_{\ell, \ell} \\
 \vdots & \vdots & \iddots \\
 C_{0,-1} & C_{0,0}
 \end{matrix}
 \]
 where each algebra is the quotient of the one above it by some $\overline{p_{\ell}}$ and a quotient of the one to the left by  some $a_i$. We claim that all these algebras $C_{\ell,i}$ are domains and that $C_{\ell, i}$ is a unique factorization domain if $i < \ell$ (that is, if it is above the diagonal in the diagram above).
 
 We first give an outline how this claim is proved. We assume by induction that the claim holds for all the rows above $\ell$. We will use Lemma~\ref{lemma:irreducible_criterion} to show that $\overline{p_{\ell}} \in C_{\ell+1,i}$ is irreducible. Since $C_{\ell, i} \cong C_{\ell+1,i} \slash \overline{p_{\ell}}$, this shows that the $\ell$-th row of the diagram consists of domains.
 
 Since $C_{\ell,i} \cong C_{\ell,i-1} \slash a_i$ is a domain, we find that $a_i \in C_{\ell,i-1}$ is a prime element. We will use Lemma~\ref{lemma:localization_isomorphism} to show that the localization $(C_{\ell,i-1})_{a_i}$ is a unique factorization domain. Since $C_{\ell,i-1}$ is noetherian, Nagata's Criterion implies that $C_{\ell,i-1}$ is a unique factorization domain.
 
 We now give a detailed proof of the claim that each $C_{\ell, i}$ is a domain and that $C_{\ell, i}$ is a unique factorization domain if $i < \ell$. We prove this by downward induction on $\ell$. If $\ell = n+1$, then each $C_{n+1,i}$ is a polynomial $R$-algebra for $0 \leq i \leq n$, hence a unique factorization domain.
 
 By induction we can assume that $C_{\ell+1, i}$ is a unique factorization domain for all $-1 \leq i \leq \ell $. We first need to establish that each $C_{\ell,i}$ is a domain for $0 \leq i \leq \ell$. If $\ell=k$, then $C_{\ell,i}=C_{\ell+1, i}$ and there is nothing to do. If $\ell \neq k$, then we have isomorphisms $C_{\ell, i} \cong C_{\ell+1,i} \slash \overline{p_{\ell}}$, so it suffices to check that $\overline{p_{\ell}}$ is irreducible in the unique factorization domain $C_{\ell+1,i}$.
 
 All monomials of $p_{\ell}$ have coefficient $1 \in R^{\times}$, and all the monomials occurring in $p_{\ell}$ have coefficient $0$ in $p_{m}$ for $m > \ell$. The algebra $C_{\ell+1,i}$ can be seen as a quotient of the polynomial algebra $R[S \setminus \{a_0, \ldots, a_i\} ]$. Since the quotient is a domain, we can apply Lemma~\ref{lemma:irreducible_criterion}, so we need to show that there are variable reductions as in Case~(1)--(3) of the Lemma~\ref{lemma:irreducible_criterion}.
 
 If $\ell=0$, then we use the variable reduction $\varphi$ given by $x_{1,0} \mapsto c_0$, $y_{1,0} \mapsto d_0$, $x_{2,0}\mapsto e_0$, $y_{2,0} \mapsto f_0$ and $s \mapsto 0$ for all other $s \in S \setminus \{a_0, \ldots, a_{i}\}$. Then $\varphi_{\ast}(p_{\ell})=c_0 d_0+e_0 f_0$ and $\varphi_{\ast}(p_m)=0 $ for all $m >0$, so we are in Case~(1) of Lemma~\ref{lemma:irreducible_criterion}.
 
 If $\ell \neq 0$, we use the variable reduction $\varphi$ given by
 \[
 \begin{matrix}
 x_{1,0} & x_{1,\ell} & y_{1,0} & y_{1,\ell} &  x_{2,0} & x_{2,\ell} & y_{2,0} & y_{2,\ell} \\
 \downmapsto & \downmapsto & \downmapsto & \downmapsto & \downmapsto & \downmapsto & \downmapsto & \downmapsto \\
 c_0 & c_{\ell} & d_0 & d_{\ell} & e_0 & e_{\ell} & f_0 & f_{\ell}
 \end{matrix}
 \]
 and $\varphi(s)=0$ for all other $s \in S \setminus \{a_0, \ldots, a_i\}$. Then we have
 \[
 \varphi_{\ast}(p_{\ell}) = c_0 d_{\ell}+ c_{\ell} d_0 + e_0 f_{\ell} + e_{\ell} f_0
 \]
 and $\varphi_{\ast}(p_m)=0$ for all $m \neq \ell, 2\ell$. If $2 \ell = k$ or $2 \ell > n$, then we have a variable reduction as in Case~(2) of Lemma~\ref{lemma:irreducible_criterion}. If not, then $\varphi_{\ast}(p_{2 \ell})=c_{\ell} d_{\ell} + e_{\ell} f_{\ell}$ and we are in Case~(3) of Lemma~\ref{lemma:irreducible_criterion}. Thus $\overline{p_{\ell}}$ is irreducible in the unique factorization domain $C_{\ell+1, i}$, which establishes the fact that $C_{\ell,i} \cong C_{\ell+1,i} \slash \overline{p_\ell}$ is a domain for $-1 \leq i \leq \ell$.
 
 It remains to show that $C_{\ell,i}$ is a unique factorization domain for all $-1 \leq i < \ell$. Since $C_{\ell, i-1} \slash a_i \cong C_{\ell,i}$ is a domain for $i=0, \ldots, \ell$, we know that $a_i \in C_{\ell, i-1}$ is a prime element. The $R$-algebra $C_{\ell, i-1}$ is noetherian since it is finitely presented. By Nagata's Criterion (see \cite[Theorem~20.2]{MATSUMURA}), it suffices to check that the localization $(C_{\ell,i-1})_{a_i}$ is a unique factorization domain for all $i=0,\ldots, \ell$. We will use Lemma~\ref{lemma:localization_isomorphism} to show that each one of these rings is a localization of some polynomial ring $R[T]$, where $T$ is a subset of $S$.
 
 To see that this lemma is applicable, it is instructive to consider a low-dimensional example. For $n=6$, the polynomials $p_m$ are depicted in the table
 \[
 \arraycolsep=1.4pt
 \begin{array}{cc|cccccccccccccccc}
p_6 &&& a_0 b_6 & + & a_1 b_5 & + & a_2 b_4  & + & a_3 b_3  & + & a_4 b_2  & +& a_5b_1 & + &  a_6 b_0 & + & \ldots \\
p_5 &&& a_0 b_5 & + & a_1 b_4 & + & a_2 b_3 & + & a_3 b_2 & + & a_4 b_1 &+& a_5 b_0 & + & \ldots \\
p_4 &&& a_0 b_4 & + & a_1 b_3 & + & a_2 b_2 & + & a_3 b_1 & + & a_4 b_0 & + & \ldots \\
p_3 &&& a_0 b_3 & + & a_1 b_2 & + & a_2 b_1 & + & a_3 b_0 & + & \ldots \\
p_2 &&& a_0 b_2 & + & a_1 b_1 & + & a_2 b_0 & + & \ldots \\
p_1 &&& a_0 b_1 & + & a_1 b_0 & + & \ldots \\
p_0 &&& a_0 b_0 & + & \ldots
 \end{array}
 \]
 where the omitted parts do not involve any of the variables $a_i$ and $b_j$. If $0 \leq i \leq \ell$, then the polynomials defining $C_{\ell,i-1}$ are obtained from the ones above by deleting the first $i-1$ columns. If $k$ lies between $0$ and $6$, we also have to remove the respective row. The key observation is that the variable $b_j$ with the largest index in a given row always occurs in the first column in the monomial $a_i b_j$, and it does not occur in any other monomial of that row or in any of the rows below it. Thus we can set the variables $t_m$ to be equal to these variables $b_j$ to see that the conditions of Lemma~\ref{lemma:localization_isomorphism} are satisfied for the ring $R[T]$, where $T$ denotes the set of the remaining variables.
 
 If, for example, $k=3$ and $i=2$, $\ell=2$, then the polynomials are
\[
 \arraycolsep=1.4pt
 \begin{array}{cccc|cccccccccccccc}
f_3 & \defl & p_6 &&&  a t_3  & + & a_3 t_2  & + & a_4 t_1  & +& a_5b_1 & + &  a_6 t_0 & + & \ldots \\
f_2 & \defl & p_5 &&&  a t_2 & + & a_3 t_1 & + & a_4 b_1 &+& a_5 t_0 & + & \ldots \\
f_1 & \defl & p_4 &&&  a t_1 & + & a_3 b_1 & + & a_4 t_0 & + & \ldots \\
f_0 & \defl & p_2 &&&  a t_0 & + & \ldots \\
 \end{array}
\]
 where $t_0=b_0$, $t_1=b_2$, $t_2=b_3$, $t_3=b_4$ and $a=a_2$. The set $T$ is given by $S \setminus \{a_0,a_1\} \cup \{t_0, t_1, t_2, t_3\}$. Note in particular that $b_1$, $b_5$, and $b_6$ are elements of $T$. The observation about the $b_j$ we noted above shows that the conditions of Lemma~\ref{lemma:localization_isomorphism} are satisfied for the ring $A=R[T]$. Moreover, we have
 \[
C_{\ell,i-1} \cong R[T][t_0,t_1,t_2,t_3] \slash f_0, f_1, f_2, f_3 \smash{\rlap{,}}
 \] 
 so the lemma implies that $(C_{\ell,i-1})_{a_i} \cong R[T]_{a_i}$ in this particular case.
 
 For the general case, if $\ell \leq k \leq n$ we set
 \[
 t_m \defl \left\{ \begin{array}{ll}
 b_{\ell + m -i} & \mbox{if $m < k-\ell$} \\
 b_{\ell + m+1-i} & \mbox{if $m \geq k-\ell$}
 \end{array} \right.
 \quad \text{and} \quad
 f_m \defl \left\{ \begin{array}{ll}
 p_{\ell + m} & \mbox{if $m < k-\ell$} \\
 p_{\ell + m+1} & \mbox{if $m \geq k-\ell$}
\end{array}  \right.
 \]
 and we let $T \defl S \setminus \{a_0, \ldots, a_{i-1}\} \cup \{t_0, \ldots, t_{n-\ell-1} \}$. Since each $t_j$ occurs only in the monomial $a_i t_j$ in the polynomial $f_j$ and does not occur in any of the $f_{m}$ for $m<j$, we can apply Lemma~\ref{lemma:localization_isomorphism} to the ring $A=R[T]$. The isomorphism
 \[
 C_{\ell,i-1} \cong R[T][t_0,\ldots, t_{n-\ell-1}] \slash f_0, \ldots, f_{n-\ell-1}
 \]
 shows that $(C_{\ell,i-1})_{a_i} \cong R[T]_{a_i}$, so this ring is indeed a unique factorization domain.
 
 If $k< \ell$ or $k > n$, the translation is a little bit simpler.  We can set $t_m \defl b_{\ell +m-i}$, $f_m \defl p_{\ell+m}$, and $T \defl S \setminus \{a_0, \ldots, a_{i-1}\} \cup \{t_0, \ldots, t_{n-\ell} \}$. By the same reasoning as above, Lemma~\ref{lemma:localization_isomorphism} is applicable, so there exists an isomorphism $(C_{i-1})_{a_i} \cong R[T]_{a_i}$. This shows that in all cases, the ring $(C_{\ell,i-1})_{a_i}$ is a unique factorization domain. Nagata's Criterion therefore implies that $C_{\ell,i-1}$ is a noetherian unique factorization domain for each $0\leq i \leq \ell$. This concludes the proof of the inductive step, and so establishes the claim made at the beginning of the proof.
 
 We find in particular that $B^R_{r,k,n}=C_{0,-1}$ is a noetherian unique factorization domain. Moreover, the sequence
 \[
  p_n, p_{n-1}, \ldots, \widehat{p_k}, \ldots, p_0
 \]
 is a regular sequence in $R[S]$ since $R[S] \slash   p_n, \ldots, \widehat{p_k}, \ldots, p_\ell \cong C_{\ell,-1}$ is a domain and in the proof above we have shown that for $\ell \neq k$, the element $\overline{p_{\ell}} \in C_{\ell+1,-1}$ is irreducible, hence in particular not equal to zero.
\end{proof}

 We are now ready to prove Theorem~\ref{thm:test_set_of_complete_intersections}.
 
 \begin{cit}[Theorem~\ref{thm:test_set_of_complete_intersections}]
  There exists a countable test set $\ca{T}$ for $(\mathrm{H})$ with the following properties:
 \begin{enumerate}
 \item[(i)] Each $A \in \ca{T}$ is $\mathbb{N}$-graded, with degree $0$ part $A_0=\mathbb{Z}$;
 \item[(ii)] Each $A \in \ca{T}$ is a global complete intersection ring;
 \item[(iii)] Each $A \in \ca{T}$ is a unique factorization domain. 
 \end{enumerate}
 \end{cit}
 
 \begin{proof}[Proof of Theorem~\ref{thm:test_set_of_complete_intersections}]
 Let $\ca{T}$ be the set of commutative rings
 \[
 \ca{T} \defl \{ B_{r,k,n}^{\mathbb{Z}} \vert r \geq 2, k,n \in \mathbb{N} \}
 \]
 of Definition~\ref{dfn:building_blocks}. Let $\ca{L}$ denote the class of commutative local rings and let $\ca{H}$ be the class of commutative Hermite rings. The implication $(iii) \Rightarrow (i)$ of Theorem~\ref{thm:test_set} shows that the implication
 \[
 (\mathrm{H}[\ca{T}]) \Rightarrow (\mathrm{H}[\ca{L}])
 \]
 holds (see Definition~\ref{dfn:test_set_for_H} for the notation). From Quillen's Patching Theorem it follows that the implication
 \[
 (\mathrm{H}[\ca{L}]) \Rightarrow (\mathrm{H}[\ca{H}])
 \]
 also holds. This shows that $\ca{T}$ is a test set for $(\mathrm{H})$.
 
 Conditions~(ii) and (iii) of Theorem~\ref{thm:test_set_of_complete_intersections} follow directly from Theorem~\ref{thm:building_blocks_ufd}. Finally we note that each polynomial $p_{\ell}$ appearing in the definition of $B^{\mathbb{Z}}_{r,k,n}$ is homogeneous of degree $2$. Thus the quotient ring $B^{\mathbb{Z}}_{r,k,n}$ inherits an $\mathbb{N}$-grading from the polynomial algebra $\mathbb{Z}[S_{r,n}]$, which we endow with the $\mathbb{N}$-grading where each variable has degree $1$ and the constant polynomials have degree $0$. From this convention it immediately follows that the degree $0$ part of $B^{\mathbb{Z}}_{r,k,n}$ is $\mathbb{Z}$.
 \end{proof}
 
 \begin{prop}\label{prop:graded_homotopy_trick}
 Assume that the Hermite ring conjecture $(\mathrm{H})$ holds and let $A=A_0 \oplus A_1 \oplus \ldots$  be an $\mathbb{N}$-graded commutative ring. Then every finitely generated stably free $A$-module is extended from $A_0$.
 \end{prop}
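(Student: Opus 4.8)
The plan is to apply the Swan--Weibel homotopy trick to transport the question from the graded ring $A$ to the polynomial ring $A[t]$, where the hypothesis $(\mathrm{H})$ becomes available. First I would introduce the homotopy homomorphism $\psi\colon A\rightarrow A[t]$ which sends a homogeneous element $a\in A_d$ to $at^d$ and is extended additively. Because every element of the $\mathbb{N}$-graded ring $A$ has a finite homogeneous decomposition and multiplication respects the grading, $\psi$ is a well-defined ring homomorphism. Writing $\epsilon_0,\epsilon_1\colon A[t]\rightarrow A$ for evaluation at $t=0$ and $t=1$, one checks directly that $\epsilon_1\circ\psi=\id_A$ whereas $\epsilon_0\circ\psi=\iota\circ\pi$, where $\pi\colon A\rightarrow A_0$ is the ring projection onto the degree-$0$ part and $\iota\colon A_0\rightarrow A$ is the inclusion. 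These two identities are the only input coming from the grading.

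Next, given a finitely generated stably free $A$-module $P$, I would form the base change $Q\defl A[t]\otimes_A P$ of $P$ along $\psi$ (so that $A[t]$ is regarded as an $A$-algebra via $\psi$). Since extension of scalars along a ring homomorphism preserves stably free modules, $Q$ is a finitely generated stably free $A[t]$-module. By transitivity of base change, the two identities above give natural isomorphisms $\epsilon_1^{\ast}Q\cong P$ and $\epsilon_0^{\ast}Q\cong A\otimes_{A_0}P_0$, where $P_0\defl P\slash A_+ P\cong A_0\otimes_A P$ and $A_+\defl A_1\oplus A_2\oplus\cdots$. In particular $\epsilon_0^{\ast}Q$ is visibly extended from $A_0$, so it suffices to produce an isomorphism $P\cong\epsilon_0^{\ast}Q$.

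The crux is to show that $Q$ is extended from $A$. Granting this, we have $Q\cong A[t]\otimes_A (Q\slash tQ)$, and applying $\epsilon_0$ and $\epsilon_1$ shows that both $\epsilon_0^{\ast}Q$ and $\epsilon_1^{\ast}Q$ are isomorphic to $Q\slash tQ$; combined with the previous paragraph this yields $P\cong\epsilon_1^{\ast}Q\cong\epsilon_0^{\ast}Q\cong A\otimes_{A_0}P_0$, which is the desired conclusion. To prove that $Q$ is extended from $A$ I would argue locally and invoke Quillen's Patching Theorem. For each maximal ideal $\mathfrak{m}\subseteq A$ the ring $A_{\mathfrak{m}}$ is local, hence Hermite, so the hypothesis $(\mathrm{H})$ guarantees that $A_{\mathfrak{m}}[t]$ is Hermite as well. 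As localization commutes with the base change along $\psi$, the module $Q_{\mathfrak{m}}$ is stably free over $A_{\mathfrak{m}}[t]$ and therefore free; a free $A_{\mathfrak{m}}[t]$-module is extended from $A_{\mathfrak{m}}$. Since $Q$ is finitely generated projective over $A[t]$ and extended from $A_{\mathfrak{m}}$ at every maximal ideal $\mathfrak{m}$ of $A$, Quillen's Patching Theorem shows that $Q$ is extended from $A$.

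I expect the main obstacle to lie entirely in this last step, which is where the hypothesis $(\mathrm{H})$ is genuinely used: the homotopy manipulations of the first two paragraphs are formal, and the whole weight of the argument rests on passing from the automatically Hermite local rings $A_{\mathfrak{m}}$ to the Hermite property of $A_{\mathfrak{m}}[t]$ and then globalizing via Quillen patching. The minor points requiring care are that $\psi$ is compatible with localization at $\mathfrak{m}$ and that ``extended from $A_{\mathfrak{m}}$'' for a module over a polynomial ring over the local ring $A_{\mathfrak{m}}$ simply means free, so that no separate appeal to Horrocks' Theorem is needed here.
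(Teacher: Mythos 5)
Your proof is correct and takes essentially the same route as the paper: the paper likewise uses $(\mathrm{H})$ together with the fact that local rings are Hermite and Quillen's Patching Theorem to show that every finitely generated stably free $A[t]$-module is extended from $A$, and then concludes by the Swan--Weibel Homotopy Trick, which it cites from Lam (Theorem V.3.9) rather than writing out. Your first two paragraphs are simply an explicit unfolding of that cited homotopy argument (the homomorphism $\psi$, the evaluations $\epsilon_0,\epsilon_1$, and the comparison of $\epsilon_0^{\ast}Q$ with $\epsilon_1^{\ast}Q$), so there is no gap and no substantive difference in approach.
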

 
 \begin{proof}
 Let $F \colon \CRing \rightarrow \Set$ be the functor which sends a commutative ring $R$ to the set of isomorphism classes of finitely generated stably free $R$-modules. For each maximal ideal $\mathfrak{m}$ of $A$, the local ring $A_{\mathfrak{m}}$ is Hermite, so each finitely generated stably free $A_{\mathfrak{m}}[t]$-module is free by Assumption~$(\mathrm{H})$. From Quillen's Patching Theorem it follows that $F(A) \rightarrow F(A[t])$ is surjective. Since this is a split injection, it is in fact a bijection. 
 
 The Swan--Weibel Homotopy Trick implies that $F(A_0) \rightarrow F(A)$ is a bijection as well. This is proved in \cite[Theorem~V.3.9]{LAM} in the case where $F$ takes values in abelian groups, but the proof works verbatim for functors valued in sets.
 \end{proof}

 We have an analogous result concerning Suslin's Problem about unimodular rows. Let $\ca{C}$ be a class (or set) of commutative rings. We write $(\mathrm{Su}_r[\ca{C}])$ for the assertion:
 \begin{center}
 For all $A \in \ca{C}$, all unimodular rows in $\mathrm{Um}_{r+1}(A[t],(t))$ are completable.
 \end{center}
 
 In \cite[Problem~4]{SUSLIN}, Suslin asked if $(\mathrm{Su}_r[A])$ holds for each commutative local ring $A$ in which $r!$ is a unit. By the equivalence $(i) \Leftrightarrow (ii)$ of Theorem~\ref{thm:test_set}, this is equivalent to the question if $(\mathrm{Su}_r[\CAlg_{\mathbb{Z}[1 \slash r!]}])$ holds, where $\CAlg_{\mathbb{Z}[1 \slash r!]}$ denotes the class of all commutative $\mathbb{Z}[1 \slash r!]$-algebras.
 
 We call a set $\ca{T}$ of commutative rings a \emph{test set for $(\mathrm{Su}_r)$} if the implication
 \[
(\mathrm{Su}_r[\ca{T}]) \Rightarrow (\mathrm{Su}_r[\CAlg_{\mathbb{Z}[1 \slash r!]}])
 \]
 holds.
 
 \begin{thm}
 Let $r \geq 2$ be a natural number. There exists a test set $\ca{T}_r$ for $(\mathrm{Su}_r)$ with the following properties:
 \begin{enumerate}
 \item[(i)] Each $A \in \ca{T}_r$ is $\mathbb{N}$-graded, with degree $0$ part $A_0=\mathbb{Z}[1 \slash r!]$;
 \item[(ii)] Each $A \in \ca{T}_r$ is a quotient $\mathbb{Z}[1 \slash r!][t_1, \ldots, t_n] \slash I$ where $I$ is a complete intersection ideal;
 \item[(iii)] Each $A \in \ca{T}_r$ is a unique factorization domain.
 \end{enumerate}
 \end{thm}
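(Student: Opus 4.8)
The plan is to take $\ca{T}_r$ to be the family of building-block algebras of Definition~\ref{dfn:building_blocks} over the base ring $R \defl \mathbb{Z}[1 \slash r!]$, for the fixed value of $r$:
\[
 \ca{T}_r \defl \bigl\{ B^{\mathbb{Z}[1 \slash r!]}_{r,k,n} \bigm\vert k,n \in \mathbb{N} \bigr\} \smash{\rlap{.}}
\]
This is a countable set, and I claim it has all the required properties. Everything will follow by specialising the machinery already developed to the base ring $\mathbb{Z}[1 \slash r!]$.

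The first observation I would make is that $R = \mathbb{Z}[1 \slash r!]$ is a noetherian unique factorization domain: it is the localization of the principal ideal domain $\mathbb{Z}$ at the multiplicative set generated by $r!$, and both the noetherian property and the UFD property are inherited under localization. This is exactly the hypothesis required to invoke Theorem~\ref{thm:building_blocks_ufd}.

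For the test-set property I would apply Theorem~\ref{thm:test_set} with base ring $R = \mathbb{Z}[1 \slash r!]$. Unwinding the definitions, condition~$(ii)$ of that theorem is literally the assertion $(\mathrm{Su}_r[\CAlg_{\mathbb{Z}[1 \slash r!]}])$, while condition~$(iii)$ is literally $(\mathrm{Su}_r[\ca{T}_r])$. Hence the implication $(iii) \Rightarrow (ii)$ furnished by Theorem~\ref{thm:test_set} reads
\[
 (\mathrm{Su}_r[\ca{T}_r]) \Rightarrow (\mathrm{Su}_r[\CAlg_{\mathbb{Z}[1 \slash r!]}]) \smash{\rlap{,}}
\]
which is precisely the statement that $\ca{T}_r$ is a test set for $(\mathrm{Su}_r)$.

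It remains to verify properties~(i)--(iii), and here I would follow the proof of Theorem~\ref{thm:test_set_of_complete_intersections} essentially verbatim, replacing $\mathbb{Z}$ by $\mathbb{Z}[1 \slash r!]$ throughout. Properties~(ii) and~(iii) are immediate consequences of Theorem~\ref{thm:building_blocks_ufd}: since $R$ is a noetherian UFD, each $B^R_{r,k,n}$ is a noetherian UFD, giving~(iii); and the defining sequence $p_n, \ldots, \widehat{p_k}, \ldots, p_0$ is regular, so the ideal $J_k$ is a complete intersection ideal and, after relabelling the finitely many variables of $S_{r,n}$ as $t_1, \ldots, t_N$, the algebra $B^R_{r,k,n}$ is a quotient $\mathbb{Z}[1 \slash r!][t_1, \ldots, t_N] \slash J_k$ by a complete intersection ideal, giving~(ii). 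For~(i) I would reuse the grading argument: each $p_{\ell}$ is homogeneous of degree $2$, so the standard grading on $\mathbb{Z}[1 \slash r!][S_{r,n}]$ with every variable in degree $1$ descends to $B^R_{r,k,n}$, and its degree-$0$ part is the coefficient ring $A_0 = \mathbb{Z}[1 \slash r!]$. As every step is a direct appeal to an already-established result, there is no genuine obstacle; the only points demanding care are confirming that $\mathbb{Z}[1 \slash r!]$ is a noetherian UFD so that Theorem~\ref{thm:building_blocks_ufd} applies, and correctly identifying conditions~$(ii)$ and~$(iii)$ of Theorem~\ref{thm:test_set} with the relevant instances of $(\mathrm{Su}_r[-])$.
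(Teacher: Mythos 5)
Your proposal is correct and follows essentially the same route as the paper: the same choice of test set $\ca{T}_r = \{B^{\mathbb{Z}[1 \slash r!]}_{r,k,n} \mid k,n \in \mathbb{N}\}$, the identification of the test-set property with the implication $(iii) \Rightarrow (ii)$ of Theorem~\ref{thm:test_set} over the base ring $\mathbb{Z}[1 \slash r!]$, and Theorem~\ref{thm:building_blocks_ufd} plus the degree-$2$ homogeneity of the $p_\ell$ for properties (i)--(iii). Your explicit verification that $\mathbb{Z}[1 \slash r!]$ is a noetherian UFD is a detail the paper leaves implicit, but it is the same argument.
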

 
 \begin{proof}
 Let $\ca{T}_r$ be the set
 \[
 \ca{T}_r \defl \{B^{\mathbb{Z}[1 \slash r!]}_{r,k,n} \vert k,n \in \mathbb{N} \}
 \]
 of Definition~\ref{dfn:building_blocks}. The equivalence $(ii) \Leftrightarrow (iii)$ of Theorem~\ref{thm:test_set} shows that $\ca{T}_r$ is a test set for $(\mathrm{Su}_r)$. Theorem~\ref{thm:building_blocks_ufd} shows that Conditions~(ii) and (iii) hold. Condition~(i) follows from the fact that each polynomial $p_{\ell}$ in the definition of $B^{\mathbb{Z}[1 \slash r!]}_{r,k,n}$ is homogeneous of degree $2$.
 \end{proof}

\bibliographystyle{amsalpha}
\bibliography{laurent}

\end{document}